\newcommand{\ind}{\mathbf{1}}
\newcommand{\E}{ \mathbb{E}}
\newcommand{\Pro}{\mathcal{P}}
\newcommand{\cQ}{\mathcal{Q}}
\newcommand{\gm}{\mathfrak{m}}
\newcommand{\gn}{\mathfrak{n}}
\newcommand{\Diam}{\textrm{Diam }}
\newcommand{\bert}[1]{\noindent \textcolor{blue}{\textsf{[B: #1]}}}
\newtheorem{theorem}{Theorem}
\newtheorem{remark}{Remark}
\newtheorem{definition}{Definition}
\newtheorem{lemma}{Lemma}
\newtheorem{corollary}{Corollary}
\newtheorem{proposition}{Proposition}
\numberwithin{equation}{section}
\title{Concentration of the empirical measure in Wasserstein distance: bounds involving the covering dimension}
\author[1]{Jérôme Dedecker}
\affil[1]{Université Paris Cité, UMR CNRS  8145, Laboratoire MAP5, 75006, Paris, France.}
\author[2]{Aurélie Fischer}
\affil[2]{Université Paris Cité, UMR CNRS 8001, Laboratoire de Probabilités, Statistique et Modélisation, 75013, Paris, France}
\author[3]{Bertrand Michel}
\affil[3]{Centrale Nantes, Nantes Universit\'e, Laboratoire de Math\'ematiques Jean Leray UMR CNRS 6629,
France}
\date{}
\begin{document}

\maketitle

\begin{abstract}
 We give  concentration inequalities in Wasserstein distance for the empirical measure of a sequence of independent and identically distributed random  variables with values in a Polish  space $E$. These inequalities involve the covering dimension of the support of the distribution of the variables. More precisely, we obtain a complete extension of the concentration inequalities of \cite{fournier2015rate} in the case where $E={\mathbb R}^d$, in which the covering dimension replaces the dimension of the ambient space $E$.   
\end{abstract}

\noindent {\bf Keywords:} Concentration inequalities, Wasserstein distances, Empirical measure, Covering dimension, Geometric inference.

\smallskip

\noindent {\bf Mathematical Subject Classification (2020):} 60B05, 60B10, 60E15, 60F10

\section{Introduction}
Let $(\Omega, {\mathcal A}, {\mathbb P})$ be a probability space,  and $(X_i)_{1 \leq i \leq n}$ be a sequence of independent and identically distributed (i.i.d.) random variables with values in a Polish space $(E,d)$. Let $\mu$ be the common distribution of the variables $X_i$, and 
$$L_n = \frac 1 n \sum_{i=1}^n \delta_{X_i}$$ be the empirical measure associated with the sample $(X_i)_{1 \leq i \leq n}$. 
A natural question in geometric inference and topological data analysis  is to determine precisely how the empirical measure $L_n$ approximates the measure $\mu$ when the data is distributed over a subset $S$ of $E$, depending on the topological properties of $S$  (which is not necessarily a manifold).  

Since the measures $L_n$ and $\mu$ are singular in most cases, a family of natural distances that can be used to evaluate the difference between these measures is that of Wasserstein distances
$W_p$, $p \geq 1$. The Wasserstein distance $W_p$ is defined on the set $\Pro_p(E)^2$ of couples of measures with a finite $p$-th moment by
\begin{equation*}
 W_p^p (\mu, \nu) = \inf_{ \pi \in \Pro(\mu, \nu)} \int d^p(x, y) \pi(dx, dy)
\end{equation*}
where the infimum is taken on the set $\Pro(\mu, \nu)$ of probability measures with  marginals $\mu$ and $\nu$.  Then $W_p$ defines a metric on $\Pro_p(E)$. Recall that $W_1$ is better known as the Kantorovich distance, and has the simple dual form:
$$
W_1(\mu, \nu) = \sup_{f \in \text{Lip}_1(E)} \left | \mu(f)- \nu (f) \right | \, ,
$$
where Lip$_1(E)$ is the set of functions from $E$ to ${\mathbb R}$ such that $|f(x)-f(y)| \leq d(x,y)$. We refer to \cite{rachev1998mass,villani2008optimal} for an  extensive treatment of these distances.

Let us return to our initial problem. Let $S$ be a subset of $E$ such that $\mu(S)=1$. We will call this set the {\em support} of $\mu$, but it may well be a larger set than the usual support,  on which we know the data is distributed. Let us also define the covering numbers of a bounded  set $X \subset E$. 
\begin{definition}
For a bounded set $X \subset E$, the covering number of order $\delta$, denoted by $N(X, \delta)$, is defined as
the minimal $n \in \mathbb{N}$ such that there exist $x_1, \ldots, x_n$ in $X$ with
\begin{equation*}
 X \subset \bigcup_{j = 1}^n B(x_i, \delta) \, 
\end{equation*}
where $B(x,r)$ denotes the closed ball of radius $r$ centered at $x$.
\end{definition}
An initial answer to our question is provided by \cite{boissard2014mean} when the set $S$ is totally bounded, with diameter $\Delta$. More precisely, if the set $S$ is of finite-dimensional type, in the sense that there exist $\alpha, \beta >0$ such that for any $\delta \in (0,\Delta] $,
\begin{equation}
\label{eq:dimmetric}
    N(S, \delta) \leq  \beta \left( \frac \Delta \delta \right) ^\alpha ,
\end{equation} then, if $\alpha >2p$, 
$$
{\mathbb E}(W_p(L_n, \mu)) \leq C_{\alpha, \beta, p} \frac{\Delta}{n^{1/\alpha}} \, .
$$
where the positive constant $C_{\alpha, \beta, p}$ depends only on $\alpha, \beta, p$. As indicated by \cite{boissard2014mean}, the quantity $\alpha$ in \eqref{eq:dimmetric} plays the role of a dimension. With a slight abuse of language, we will say that  $\alpha$ is the {\em covering dimension} of $S$. 
The case of unbounded support is also considered by Boissard and Le Gouic. In their Corollary 1.3, they provide a control of ${\mathbb E}(W_p(L_n, \mu))$ under a moment assumption on $\mu$, and assuming that $N(S\cap B(x_0, r), \delta)$ satisfies the condition \eqref{eq:dimmetric} uniformly in $r>0$ (see the condition \eqref{eq:dimmetric2} below).

In this article, we adopt the general framework considered by \cite{boissard2014mean}, with the same kind of conditions on the covering dimension of $S$. However, a more appropriate notion of dimension for Wasserstein distances has been introduced by \cite{Weed19}. This Wasserstein dimension - according to the author's terminology - is no longer simply related to the support of $\mu$, but also to the measure $\mu$ itself. The relevance of this notion of dimension is clearly highlighted in Theorem 1 of \cite{Weed19}. Note also that, in the same article, the authors give a sub-Gaussian concentration inequality for $W_p^p(L_n, \mu)$ around its expectation (McDiarmid-type inequality), in the case where the support $S$ is bounded. 

Let us now turn to the known results when $E={\mathbb R}^d$, without taking into account the intrinsic dimension of the support of $\mu$. The most comprehensive article on this subject is that of \cite{fournier2015rate}, in which the authors give several concentration inequalities in Wasserstein distance, i.e. inequalities allowing to control the deviation ${\mathbb P}\left (W_p^p(L_n, \mu)>x \right )$. In particular, the authors give Hoeffding-type concentration inequalities in the case where $S$ is bounded (see their Proposition 10), Bernstein-type inequalities in the case where $\mu$ has a sub/super exponential  moment (see their Theorem 2 cases (1) and (2)), and Fuk-Nagaev-type inequalities when $\mu$ has a strong moment of order $q>2p$ (see their Theorem 2 case (3)). This last inequality was improved in the article by \cite{BehaviorDedeckerMerl19}, who obtain Fuk-Nagaev-type inequalities under weak moment assumptions (see their Theorem 2.1 in case of weak moments of order $q \in (p,2p)$ and Theorem 2.3 in case of weak moments of order $q>2p$).

In the present article, we show that all the concentration inequalities obtained by \cite{fournier2015rate} and \cite{BehaviorDedeckerMerl19} remain true when replacing the ambient dimension $d$ with the covering dimension $\alpha$ of the support of $\mu$. Furthermore, in the case where $\mu$ has a weak moment of order $q \in (p, 2p)$, our results are more precise than those obtained in Theorem 2.1 of \cite{BehaviorDedeckerMerl19}.

To obtain these concentration results, we will combine several ingredients: in the bounded case, we apply the dyadic transport Lemma as written in Proposition 1 of \cite{Weed19} (see also \cite{DSS13,boissard2014mean,fournier2015rate} for similar results), then Rosenthal's inequality for the norm $r\geq 2$,  with the optimal constants depending on  $r$, as described by \cite{pinelis1994optimum}. In the unbounded case, we start from the decomposition described in Section 2 of \cite{boissard2014mean}, then we adapt some calculations from \cite{fournier2015rate} and \cite{BehaviorDedeckerMerl19}. We wish to emphasise the decisive importance of the optimal Rosenthal inequality in our proof strategy: it is thanks to this version of Rosenthal's inequality that we obtain our first estimates for $\|W_p^p(L_n, \mu)\|_r$ for $r\geq 2$ (see our Proposition \ref{prop:bound-r-bounded}), with the correct dependence on $r$ and $\alpha$. It is then sufficient to optimise in $r\geq 2$ to obtain Hoeffding-type inequalities (see our Proposition \ref{PropHoeff}). These estimates on $\|W_p^p(L_n, \mu)\|_r$ are reused in the unbounded case to control  the main term in the decomposition \eqref{borne3termes-bis}, leading to the key Lemma \ref{expobound-mainterm}.

Before stating and demonstrating our results, let us conclude this introduction with a result on large deviations, which is interesting in itself. Let $x_0 \in E$, and let
\begin{equation}\label{DefTail}
  H(t)=  \mu(B(x_0,t)^c)   
\end{equation} be the tail distribution of the measure $\mu$. 
For any $q \geq 1$, the weak moment of order $q$ of a random variable $X$ with distribution  $\mu$  (or equivalently the weak moment of order $q$ of $\mu$) is then defined by 
\begin{equation}\label{DefWeakMoment}
\|X\|_{q, w}^q:= \sup_{t \geq 0} t^q H(t) \, .
\end{equation}
Our large deviations result is the following (it is a particular case of Corollaries \ref{modev_unbounded-alphageq2p}  and \ref{modev_unbounded-alphaleq2p} of Section \ref{SecUnbounded}, by taking $\rho=1$): if the support $S$ is of finite dimensional-type with covering dimension $\alpha$ (meaning that there exist $\alpha>0, \beta >0$ such that inequality \eqref{eq:dimmetric2} of Section \ref{SecUnbounded} holds), and if $\mu$ has a weak moment of order $rp$ for some $r>1$ (except for $r=2$, and $r= (\alpha-p)/\alpha$ in case $ \alpha>2p$, for which the moment condition needs to be slightly reinforced), then 
\begin{equation}\label{LDbound}
\limsup_{n \rightarrow \infty} n^{r-1} {\mathbb P}\left (W_p^p(L_n, \mu)>x \right ) \leq C_{p,\alpha,\beta,r}   \frac{\|X\|^{rp}_{rp, w}}{x^r} \, ,
\end{equation}
where the positive constant $C_{p,\alpha,\beta,r}$ depends only on 
${p,\alpha,\beta,r}$. The result \eqref{LDbound} is remarkable, because it shows that the rate of decay of ${\mathbb P}\left (W_p^p(L_n, \mu)>x \right )$ is always (at worst)   $O(1/n^{r-1})$ regardless of the structure of the ambient space $E$, provided that $S$ is of finite-dimensional type, with the intrinsic dimension $\alpha$ only appearing in the constant $C_{p,\alpha,\beta,r}$.

As for the numerical constants, which depend among other parameters on $\alpha$ and $p$, we could display explicit constants by being more precise at certain steps in the proofs, but this would be pointless and unnecessarily complicated, as we have made no effort to optimise these constants. Obtaining optimal constants in these concentration inequalities is an important topic, but it may require different proof techniques. A first step in this direction has been made by \cite{Fournier23} for the quantity ${\mathbb E}(W_p^p(L_n, \mu))$, in case $E={\mathbb R}^d$ (but the constants here depend on the ambient dimension $d$).

The paper is organised as follows. In Section \ref{Sec2}, we give moment bounds and Hoeffding-type concentration inequalities for $W_p^p(L_n, \mu)$ when the support $\mu$ is totally bounded. As a consequence of the maximum versions of these inequalities, we deduce almost sure rates of convergence for $W_p^p(L_n, \mu)$. In Section \ref{SecUnbounded}, we consider the case where the support $\mu$ is unbounded, and we give Fuk-Nagev-type inequalities (under weak moment hypotheses) as well as Bernstein-type inequalities (under sub/super exponential moment hypotheses) for $W_p^p(L_n, \mu)$. Section \ref{Sec4} is devoted to proofs of the results in the unbounded case. In the appendix, for the sake of completeness, we recall the moment inequalities for partial sums of i.i.d. random variables that are used in the proofs.

In the rest of this article (as we have already done in this introduction), we denote by $C_{a_1, a_2, \ldots, ak}$ a positive constant that depends only on $a_1, a_2, \ldots, a_k$. Furthermore, the constants $C_{a_1, a_2, \ldots, ak}$ may change from  line to line; finally, two constants $C_{a_1, a_2, \ldots, ak}$ appearing in the same equation may have different values.

%For some $q >1 $ and $f : E^n \mapsto \R$ a measurable function which admits a moment of power $q$ for the measure $\mu^{\otimes n}$, we define 
%$$ \| f \|_q^q  = \| f \|_{\mu,n,q} ^q := \E |f(X_1, \dots, X_n)|^q . $$

%Our purpose is to give concentration inequalities and rates of convergence for $ W_p ^p (\mu, L_n)$ for the 
%$  \| \, \|_{\mu,n,q} $ norm , where $\mu$ is defined on a general %metric space.

%The support of $\mu$ is denoted by $S$. We first give our results in the case of bounded support and then we derive  Fuk-Nagaev-like
%inequalities in the unbounded case for $ W_p ^p (\mu, L_n)$, under %weak moment assumptions. 

%Our results build on the works of %\cite{fournier2015rate,BehaviorDedeckerMerl19,boissard2014mean,Wee%d19}.

%In all the paper, the constants $C_{ p}$ and $C_{\alpha,p,\beta}$ %may change from line to line.

\section{Totally bounded support}\label{Sec2}

\subsection{Moment inequalities}

Recall that $S$ is a set such that $\mu(S)=1$ (which we call the support of $\mu$). Using a sequence of partitions of $S$ proposed by \cite{boissard2014mean}, as well as a dyadic transport lemma from the paper by \cite{Weed19}, we first give an analogue of Theorem 1.1 from \cite{boissard2014mean} that allows us to control the moments of order $r\geq 2$ of $W_p ^p ( \mu,L_n)$.
\begin{proposition}   
\label{prop:wppr_bounded}
Assume that the support $S$ of $\mu$ is  bounded with diameter $\Delta$, and that $N(S, 4^{-(k^*+2)}\Delta)< \infty$ for some integer $k^*\geq 1$. Then,  for any $ r \geq 2 $ and any $p\geq 1$, 
%then for any $k^* \geq 1$,
$$
\| W_p ^p ( \mu,L_n)\|_r  \leq
\frac{\Delta ^p}{4^{k^* p}}  +
C_p  \int_{ 4^{-k^*}\Delta /16}^{\Delta /16}
\delta^{p-1} \bigg[ \sqrt{\frac r n} \sqrt{N(S, \delta)}+ \frac{r}{n} N(S,\delta)  \bigg]  d \delta 
     $$
where $C_p$ only depends on $p$.
\end{proposition}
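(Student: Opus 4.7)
The plan is to combine three ingredients: a nested dyadic partition of $S$ at geometric scales, the dyadic transport lemma applied to the pair $(\mu, L_n)$, and Rosenthal's inequality with Pinelis' sharp constants to control the resulting Bernoulli fluctuations in $L^r$.

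First I would, following \cite{boissard2014mean}, construct a finite sequence of partitions $(\mathcal{P}_k)_{0\leq k\leq k^*}$ of $S$ with $|\mathcal{P}_k|\leq N(S,c\cdot 4^{-k}\Delta)$ and each cell of diameter at most $c'\cdot 4^{-k}\Delta$, the constants $c,c'$ being the ones that produce the factor $1/16$ in the final integral limits. Plugging $(\mu, L_n)$ into the dyadic transport lemma (Proposition 1 of \cite{Weed19}) then yields a deterministic inequality of the form
\[
W_p^p(\mu, L_n) \leq \frac{\Delta^p}{4^{k^* p}} + C_p \sum_{k=0}^{k^*-1} (4^{-k}\Delta)^p \sum_{A\in\mathcal{P}_k} |L_n(A)-\mu(A)|,
\]
the first term coming from the residual transport inside cells of the finest partition.

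Next I would take the $L^r$ norm and use Minkowski to pull the sums outside. For each cell $A$, the variable $n(L_n(A)-\mu(A))$ is a centered sum of $n$ i.i.d.\ Bernoulli$(\mu(A))$ increments bounded by $1$ with variance at most $\mu(A)$, so the optimal Rosenthal inequality (\cite{pinelis1994optimum}) gives
\[
\|L_n(A) - \mu(A)\|_r \leq C \left( \sqrt{\frac{r\, \mu(A)}{n}} + \frac{r}{n} \right).
\]
The key feature here is the $\sqrt{r}$ dependence (not $r$) in the variance term, which is what will produce the Hoeffding-type behaviour downstream. Summing over $A\in\mathcal{P}_k$, the linear term contributes $r|\mathcal{P}_k|/n$, while a Cauchy--Schwarz step turns the variance contribution into
\[
\sum_{A\in\mathcal{P}_k} \sqrt{\mu(A)} \leq \sqrt{|\mathcal{P}_k|}\,\sqrt{\textstyle\sum_A\mu(A)} = \sqrt{|\mathcal{P}_k|}.
\]
This Cauchy--Schwarz step is the essential move that replaces $N(S,\delta)$ by $\sqrt{N(S,\delta)}$ under the square root in the final bound.

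It then remains to convert the geometric sum over $k$ into the integral in $\delta$. Using $|\mathcal{P}_k|\leq N(S,c\cdot 4^{-k}\Delta)$ and the fact that $\delta\mapsto N(S,\delta)$ is non-increasing, the generic summand at level $k$ is comparable, up to constants depending only on $p$, to the integrand $\delta^{p-1}[\sqrt{rN(S,\delta)/n}+rN(S,\delta)/n]$ over the dyadic interval $\delta \in [c\cdot 4^{-(k+1)}\Delta,\, c\cdot 4^{-k}\Delta]$, after absorbing the factor coming from $d\delta/\delta \asymp \ln 4$. Summing the blocks for $k=0,\ldots,k^*-1$ therefore yields exactly the integral $\int_{4^{-k^*}\Delta/16}^{\Delta/16}$ appearing in the statement.

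The main obstacle is essentially bookkeeping: tracking the exact scale constants through the Boissard--Le Gouic partition, the dyadic transport bound, and the discrete-to-integral conversion so as to recover the specific endpoints $4^{-k^*}\Delta/16$ and $\Delta/16$. The probabilistic content is light and concentrated in the application of the optimal Rosenthal inequality and the Cauchy--Schwarz summation, both of which are standard once the geometric setup is in place.
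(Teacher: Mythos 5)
Your proposal follows essentially the same route as the paper's proof: the Boissard--Le Gouic nested partitions at scale $4^{-k}$, the dyadic transport lemma of Weed--Bach, the optimal Rosenthal inequality applied cell-by-cell to the centered Bernoulli sums, the Cauchy--Schwarz step $\sum_A \sqrt{\mu(A)} \leq \sqrt{|\mathcal{P}_k|}$, and the conversion of the geometric sum over $k$ into an integral via monotonicity of $N(S,\cdot)$. The only cosmetic difference is that the paper first normalizes to $\Delta=1$ and rescales the metric at the end, whereas you carry $\Delta$ throughout; the argument is otherwise identical and correct.
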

 
%In the following the constant $C_p$ may change from line to line. 
 
\begin{proof}
We assume for now that the diameter $\Delta$ of $S$ is equal to one, as in  \cite{Weed19}. By applying Lemma 2.1 in~\cite{boissard2014mean} to $S$ with $s = \frac 1 4$, since we assume that $N(S, 4^{-(k^*+2)})< \infty$, 
%for any $k^* \in \mathbb{N}^*$
we can find a sequence of refined partitions $\cQ^1 , \dots, \cQ^{k^*}$ of $S$  such that
\begin{enumerate}
 \item  for each $k$, $\cQ^k = (Q_i^k)_{1 \leq i \leq m(k)}$ is a partition of $S$ where 
$
 m(k) \leq N(S, 4^{-k-1} )
$,
 \item  $\Diam Q_i^k \leq 4^ {-k}   $,
 \item  for each $1 \leq k \leq k^*$ and each $1 \leq i \leq m(k)$ there exists $1 \leq i' \leq m(k-1)$ such that $Q_i^k \subset Q_{i'}^{k-1}$.
\end{enumerate}
 
Next, we apply Proposition~1 in \cite{Weed19} for this refined sequence of partitions. We obtain the upper bound
\begin{equation*}
W_p^p(\mu, L_n) \leq \frac{1}{4^{k^* p}} + 4^p\sum_{k = 1}^{k^*} \frac{1}{4^{k p}} \sum_{Q_i^k \in \cQ^k} |\mu(Q_i^k) - L_n(Q_i^k)|\,. 
\end{equation*}
Consequently
\begin{eqnarray*}
\left\| W_p^p(\mu, L_n) \right\|_r 
&\leq& \frac{1}{4^{k^* p}} + 4^p   \sum_{k = 1}^{k^*} \frac{1}{4^{k p}}  \left\| \sum_{Q_i^k \in \cQ^k}   \mu(Q_i^k) - L_n(Q_i^k)  \right\|_r \\
&\leq & \frac{1}{4^{k^* p}} + 4^p  \sum_{k = 1}^{k^*} \frac{1}{4^{k p}}  \sum_{Q_i^k \in \cQ^k} \left\|  \mu(Q_i^k) - L_n(Q_i^k)  \right\|_r \\
&\leq & \frac{1}{4^{k^* p}} + 4^p   \sum_{k = 1}^{k^*} \frac{1}{4^{k p}}   \sum_{Q_i^k \in \cQ^k} L
\bigg[ \sqrt{\frac r n} \sqrt{\mu(Q_i^k)}+\frac{r}{n}\bigg] 
\end{eqnarray*}
by applying the Rosenthal inequality (recalled in  Theorem~\ref{Lem-Ros} of the Appendix). Next, by Cauchy-Schwarz inequality, and according to the definition of the partition $\cQ^k$, one obtains
\begin{eqnarray*}
\left\| W_p^p(\mu, L_n) \right\|_r 
& \leq&  \frac{1}{4^{k^* p}} + 4^p  L\sum_{k = 1}^{k^*} \frac{1}{4^{k p}}    
\bigg[ \sqrt{\frac r n} \sqrt{m(k)}+ m(k)  \frac{r}{n}\bigg]  \\
& \leq&  \frac{1}{4^{k^* p}} + 4^p   L\sum_{k = 1}^{k^*} \frac{1}{4^{k p}}    
\bigg[ \sqrt{\frac r n} \sqrt{N(S, 4^{-k-1} )}+ \frac{r}{n} N(S, 4^{-k-1} )  \bigg] \\
& \leq& \frac{1}{4^{k^* p}} + C_p  \int_{4^{-k^*-2}}^{4^{-2}} 
\delta^{p-1}\bigg[ \sqrt{\frac r n} \sqrt{N(S, \delta)}+ \frac{r}{n} N(S,\delta)  \bigg]  d \delta  
\end{eqnarray*}
where $C_p$ only depends on $p$. In the last line, we used the mean value inequality for the functions $t\mapsto t^{p-1}N(S,t)^{1/i}$, where $i=1,2$. Indeed, for $k=1,\dots,k^*$, for every $4^{-k-2}\leq t\leq 4^{-k-1} $, 
$$4^{-(k+2)(p-1)}   N(S,4^{-k-1})^{1/i}\leq t^{p-1}N(S,t)^{1/i},$$ and thus, $$4^{-(k+2)(p-1)}  N(S,4^{-k-1}  )^{1/i}\times 3 \times4^{-k-2} \leq \int_{4^{-k-2} }^{4^{-k-1} }t^{p-1}N(S,t)^{1/i}dt.$$
By summing in $k$, we obtain the announced result in the Proposition for $\Delta = 1$.

We now consider the  case with a general diameter $\Delta$. We introduce the metric $d' = \frac d \Delta$ for which the diameter of $S$ is one. Using obvious notation, we have $N(S, \delta,d') = N(S, \Delta \delta,d)$ and thus 
\begin{eqnarray*}
\|  W_p ^p ( \mu,L_n,d) \|_r & \leq &
\|  \Delta ^p W_p ^p ( \mu,L_n,d')\|_r \\
& \leq & \frac{\Delta ^p}{4^{k^* p}} + \Delta ^p  C_p  \int_{4^{-k^*}/16}^{1/16}
\delta^{p-1} \bigg[ \sqrt{\frac r n} \sqrt{N(S, \delta,d')}+ \frac{r}{n} N(S,\delta,d')  \bigg]  d \delta  \\
& \leq &  \frac{\Delta ^p}{4^{k^* p}} +  C_p  \int_{4^{-k^*}/16}^{1/16} 
  (\Delta\delta)^{p-1}  \bigg[ \sqrt{\frac r n} \sqrt{N(S, \Delta \delta,d)}+ \frac{r}{n} N(S, \Delta \delta,d)  \bigg] \Delta  d \delta \\
& \leq &  \frac{\Delta ^p}{4^{k^* p}} + C_p  \int_{ 4^{-k^*}\Delta/16}^{\Delta/16}
   \delta ^{p-1}  \bigg[ \sqrt{\frac r n} \sqrt{N(S,  \delta,d)}+ \frac{r}{n} N(S,  \delta,d)  \bigg]    d \delta .
\end{eqnarray*}
\end{proof}

In the rest of this section, we assume that $S$ is totally bounded, meaning that $N(S, \delta)$ is finite for any $\delta >0$. More precisely, as  in~\cite{boissard2014mean}, 
we assume that the support $S$ of $\mu$ is of finite-dimensional  type, in the sense that there exist $\alpha>0, \beta >0$ such that \eqref{eq:dimmetric} is satisfied. 
%for any $\delta \in (0,\Delta] $,
%\begin{equation}
%\label{eq:dimmetric}
%    N(S, \delta) \leq  \beta \left( \frac \Delta \delta \right) ^\alpha .
%\end{equation}
%With a slight abuse of language, we shall say that the quantity %$\alpha $ is the {\it covering dimension} of $S$. 

\begin{proposition}
\label{prop:bound-r-bounded}
Under Assumption~\eqref{eq:dimmetric}, for any $r \geq 2$ and $p \geq 1  $, one has
\[
\frac1 {\Delta ^p} \left\| W_p^p(\mu, L_n) \right\|_r \leq 
\begin{cases}
			C_{\alpha,p,\beta} \,  \sqrt{\frac rn}  &    \text{if } \alpha < 2p ; \\
            C_{\alpha, \beta} \,   \sqrt{\frac rn} \log (e +\frac nr)      &   \text{if } \alpha =  2p ; \\            
            C_{\alpha,p,\beta} \,  \left ( \frac rn \right ) ^{\frac{p}{\alpha}}   &   \text{if } \alpha >   2p.
		 \end{cases} 
\]
%where $C_{\alpha,p,\beta} \geq 1$ are  absolute constants that %only depends on $p$, $\alpha$, and $\beta$.
\end{proposition}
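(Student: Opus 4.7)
The plan is to substitute the dimension assumption $N(S,\delta)\leq\beta(\Delta/\delta)^{\alpha}$ into the moment bound of Proposition \ref{prop:wppr_bounded} and then tune the free integer $k^*\geq 1$. After rescaling so that $\Delta=1$, the problem reduces to estimating the two explicit power integrals
\begin{equation*}
J_1(k^*)=\int_{4^{-k^*-2}}^{1/16}\delta^{p-1-\alpha/2}\,d\delta,\qquad J_2(k^*)=\int_{4^{-k^*-2}}^{1/16}\delta^{p-1-\alpha}\,d\delta,
\end{equation*}
and then choosing $k^*$ so that the dyadic remainder $4^{-k^*p}$ is balanced against the contributions $\sqrt{\beta r/n}\,J_1(k^*)$ and $(\beta r/n)\,J_2(k^*)$. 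The three regimes in the statement correspond exactly to the behaviour of $J_1$ near the origin: $J_1$ remains bounded as $k^*\to\infty$ when $\alpha<2p$, diverges logarithmically when $\alpha=2p$, and diverges polynomially when $\alpha>2p$. One may assume throughout that $r\leq n$, since otherwise the statement is trivial from the almost sure bound $W_p^p(\mu,L_n)\leq\Delta^p$.

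In the case $\alpha<2p$, I take $4^{-k^*}$ of order $(r/n)^{1/\alpha}$, so that $J_1(k^*)$ is controlled by a constant depending only on $\alpha$ and $p$ and the first integral contributes $C_{\alpha,p,\beta}\sqrt{r/n}$. The dyadic remainder is of order $(r/n)^{p/\alpha}\leq\sqrt{r/n}$ since $p/\alpha>1/2$ and $r\leq n$. The second integral is either convergent at $0$ (subcase $\alpha<p$, yielding an $(r/n)$ contribution), logarithmically divergent (subcase $\alpha=p$, yielding $(r/n)\log(n/r)$), or controlled by its lower-limit value $(r/n)^{p/\alpha}$ (subcase $p<\alpha<2p$); in every subcase the resulting contribution is bounded by $C_{\alpha,p,\beta}\sqrt{r/n}$, using $(r/n)\log(n/r)\leq\sqrt{r/n}$ for $r\leq n$.

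For $\alpha=2p$, I set $4^{-k^*}$ of order $(r/n)^{1/(2p)}$, so that $4^{-k^*p}$ is already of order $\sqrt{r/n}$. The integrand of $J_1$ becomes $\delta^{-1}$, whose integral is of order $k^*\log 4\sim\log(n/r)$, yielding the announced $\sqrt{r/n}\log(n/r)$ contribution; a direct computation shows that $(\beta r/n)\,J_2(k^*)$ is of order $\sqrt{r/n}$ only, so all terms combine into $\sqrt{r/n}\log(e+n/r)$. For $\alpha>2p$, both $J_1(k^*)$ and $J_2(k^*)$ are dominated by their lower-limit contributions; taking $4^{-k^*}$ of order $(r/n)^{1/\alpha}$ makes both integral terms and the dyadic remainder of the common order $(r/n)^{p/\alpha}$.

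The main obstacle I anticipate is the bookkeeping in the borderline subcases $\alpha\in\{p,2p\}$, where log factors appear, and the verification that they are either absorbed into $\sqrt{r/n}$ via the inequality $\log x\leq\sqrt{x}$ for $x\geq 1$, or retained and incorporated into the announced bound when $\alpha=2p$. Modulo this care with logarithms, the remainder of the argument is routine evaluation of power integrals.
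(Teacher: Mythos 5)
Your proposal is correct and follows essentially the same route as the paper: substitute the covering bound into Proposition \ref{prop:wppr_bounded}, dispose of the case $r>n$ via the trivial bound $W_p^p(\mu,L_n)\leq\Delta^p$, and choose $4^{-k^*}$ of order $(r/n)^{1/\alpha}$ to balance the dyadic remainder against the two power integrals, with the same case analysis on the convergence or divergence of those integrals (including the borderline subcases $\alpha\in\{p,2p\}$ and the absorption of the logarithm via $\log u\leq\sqrt{u}$). No gaps.
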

\begin{proof}
%In the proof,the constants $C_{\alpha,p,\beta} \geq 1$ are  absolute constants that only depends on $p$, $\alpha$, and $\beta$, and that may change from line to line.

According to Proposition \ref{prop:wppr_bounded}, for any $k^* \geq 1$, one has 
\begin{equation}
\label{eq:intborne}    
\left\| W_p^p(\mu, L_n) \right\|_r
 \leq   \frac{\Delta ^p}{4^{k^* p}} +
C_p \sqrt{ \beta} \sqrt{\frac r n}  \int_{4^{-k^*}\Delta/16}^{\Delta/16 }  
  \Delta^{\alpha/2}  \delta^{p-1-\alpha/2} d \delta 
  +
C_p \beta \frac{r}{n}  \int_{4^{-k^*}\Delta/16}^{\Delta/16}  \Delta^{\alpha}  \delta^{p-1-\alpha}  d \delta .
\end{equation}
For $\alpha \notin\{ p,2 p\} $, it yields
\begin{equation}
\label{eq:intborne2}    
\left\| W_p^p(\mu, L_n) \right\|_r
 \leq   \frac{\Delta ^p}{4^{k^* p}} +
C_{p, \alpha} \sqrt{  \frac {r\beta } n} \Delta ^p  \left[ 1 -  4^{- k^*(p - \alpha /2)} \right] 
  +
C_{p, \alpha}  \frac{r\beta}{n} \Delta ^p   \left[ 1 -  4^{- k^*(p - \alpha )} \right] .
\end{equation}
Let us now consider the five cases:
%$\alpha <p$, $\alpha \in (p, 2p)$, $\alpha=2p$ and $\alpha=p$.

\medskip

\noindent $\bullet$ If $\alpha <  p$ then the two integrals in \eqref{eq:intborne} converge, so by  letting $k^*$ tend to infinity in \eqref{eq:intborne2}, we get
\begin{equation*}
\left\| W_p^p(\mu, L_n) \right\|_r 
%&\leq&
%C_p \Delta^p  \left\{ \frac {\sqrt \beta}{p - \alpha /2 } \sqrt{ \frac{r}{n}}  + 
%\frac {\beta}{p - \alpha   }   \frac{r }{n}
%\right\} \\
%&\leq& C_{p} \frac{\beta +  \sqrt \beta}{p-\alpha} \Delta^p  \left\{  \sqrt{ \frac{r}{n}}  + 
%    \frac{r }{n}
%\right\} \\
\leq C_{\alpha,p,\beta}    \Delta^p  \left\{  \sqrt{ \frac{r}{n}}  + 
    \frac{r }{n}
\right\} .
\end{equation*}
If $n \leq r$, the inequality of Proposition \ref{prop:bound-r-bounded} is obviously true (with $C_{\alpha,p,\beta}=1$). If $n > r$, then $\frac r n \leq \sqrt{ \frac r n}$, and consequently
 %Moreover, we know that $\left\| W_p^p(\mu, L_n) \right\|_r  \leq  \Delta^p$, and since $ \left(  \sqrt{ \frac{r}{n}}  + 
%    \frac{r }{n}
%\right) \wedge 1 \leq  2  \sqrt{ \frac{r}{n}} \wedge 1  %$, we obtain
\begin{equation*}
\left\| W_p^p(\mu, L_n) \right\|_r 
%\leq  C_{\alpha,p,\beta} \Delta^p 
%\left(
% \sqrt{ \frac rn}   
%\wedge 1
%\right) 
\leq C_{\alpha,p,\beta} \Delta^p 
 \sqrt{ \frac rn}  .
\end{equation*}
 
\noindent $\bullet$ Assume that $\alpha \in (p,2p)$. The first integral in~\eqref{eq:intborne} converges, and  thus 
\begin{eqnarray}
\left\| W_p^p(\mu, L_n) \right\|_r 
&\leq& 
C_{\alpha, p, \beta} \Delta^p 
%(C_p \wedge 1)
%\left(1 \wedge  \frac {\sqrt \beta} {p - \alpha /2 } \wedge \frac {  \beta} {\alpha - p} \right)
\left(4^{- k^*p} + \sqrt{\frac rn} + \frac{r}{n} 4^{  k^*(\alpha -p)} \right). \label{eq:p-2p}
\end{eqnarray}
If $n \leq r$, the inequality of Proposition \ref{prop:bound-r-bounded} is obviously true (with $C_{\alpha,p,\beta}=1$). If $n > r$, let
 $x^* =   \frac{\log \frac nr}{ \alpha \log 4}$ such that   $4^{- x^* p} = \frac rn 4^{ x^* (\alpha- p)}$ and let $k^* = \lceil x^* \rceil  $. Then,
\begin{eqnarray*}
\left\| W_p^p(\mu, L_n) \right\|_r 
&\leq& 
C_{\alpha, p, \beta} \Delta^p 
\left( 4^{- x^*p} + \sqrt{\frac rn} + \frac{r}{n} 4^{  (x^*+1)(\alpha -p)} \right) \\
%&\leq& 
%\Delta^p 8 (C_p \wedge 1)
%\left(1 \wedge  \frac {\sqrt \beta} {p - \alpha /2 } %\wedge \frac {  \beta} {\alpha - p} \right)
%\left( \sqrt{ \frac rn}  + 
% \left( \frac rn\right) ^{p/\alpha} \right) \\
 &\leq& C_{\alpha,p,\beta} \Delta^p   \left( \sqrt{ \frac rn} +  \left( \frac rn\right) ^{p/\alpha} \right)
 \\
&\leq& C_{\alpha,p,\beta} \Delta^p 
 \sqrt{ \frac rn}  \, .
\end{eqnarray*}

\smallskip

\noindent $\bullet$  Assume that $\alpha > 2 p$. The two integrals in \eqref{eq:intborne} now diverge and 
\begin{eqnarray*}
\left\| W_p^p(\mu, L_n) \right\|_r 
&\leq& 
C_{\alpha,p,\beta} \Delta^p
\left(4^{- k^*p} + \sqrt{\frac rn} 4^{  k^*(\alpha/2 -p)} + \frac{r}{n} 4^{  k^*(\alpha -p)} \right).
\end{eqnarray*}
If $n \leq r$, the inequality of Proposition \ref{prop:bound-r-bounded} is obviously true (with $C_{\alpha,p,\beta}=1$). If $n>r$,
let $x^* =   \frac{\log \frac nr}{ \alpha \log 4}$ such that   $4^{- x^* p} = \frac rn 4^{ x^* (\alpha- p)} = \sqrt{\frac rn} 4^{  x^*(\alpha/2 -p)} $ and let $k^* = \lceil x^\star \rceil  $. It follows that 
\begin{equation*}
\left\| W_p^p(\mu, L_n )  \right\|_r 
\leq    C_{\alpha,p,\beta}  \Delta^p  \left( \frac rn\right) ^{p/\alpha}.
\end{equation*}

\noindent $\bullet$  Assume  $\alpha=p$. Then we have  
\begin{eqnarray*}
\left\| W_p^p(\mu, L_n) \right\|_r 
&\leq& 
C_{\alpha,\beta}  \Delta^p
\left(4^{- k^*p} + \sqrt{\frac rn}   + \frac{r}{n}  k^* \log 4 \right) \\
&\leq&  C_{\alpha,\beta}  \Delta^p
\left(4^{- k^*p} + \sqrt{\frac rn}   + \frac{r}{n}  4^{  k^*p/2} \right) \, .
\end{eqnarray*}
This last bound is a particular case of  Inequality~\eqref{eq:p-2p}, leading to a final bound
$C_{\alpha,\beta} \Delta^p 
 \sqrt{ \frac rn}$. 
 
 \smallskip

\noindent $\bullet$  Assume  $  \alpha = 2p $. Then we have  
\begin{eqnarray*}
\left\| W_p^p(\mu, L_n) \right\|_r 
&\leq& 
C_{\alpha,\beta} \Delta^p
\left(4^{- k^*p} + \sqrt{\frac rn}  k^* \log 4  + \frac{r}{n}  \right).
\end{eqnarray*}
If $n \leq r$, the inequality of Proposition \ref{prop:bound-r-bounded} is obviously true (with $C_{\alpha,p,\beta}=1$). If $n>r$, let
$x^* =   \frac{\log \frac nr}{ \alpha \log 4}$ such that   $4^{- x^* p} = \sqrt{\frac rn }$ and let $k^* = \lceil x^* \rceil  $. Then
\begin{eqnarray*}
\left\| W_p^p(\mu, L_n) \right\|_r 
&\leq& 
 C_{\alpha,\beta} \Delta^p
\left(4^{- x^*p} + \sqrt{\frac rn}(x^*+1) \log 4   + \frac rn   \right) \\
&\leq& 
C_{\alpha,\beta} \Delta^p \sqrt{\frac rn} \log  \left (e +\frac nr \right ) \, . 
\end{eqnarray*}
\end{proof}

\subsection{Hoeffding-type inequalities}

Starting from Proposition \ref{prop:bound-r-bounded} and optimizing in $r\geq 2$, we obtain the following concentration inequality under Assumption~\eqref{eq:dimmetric}.
\begin{proposition} \label{PropHoeff}
Under Assumption~\eqref{eq:dimmetric}, 
\[
{\mathbb P} \left(  W_p^p(\mu, L_n)   > x \right)  \leq e^2 \ind_{x \leq \Delta^p}   
\begin{cases}
			\exp \left( -n \left[ \frac{ x}{C_{p,\alpha,\beta}\Delta^p  } \right]^{2} \right)  &    \text{if } \alpha < 2p ; \\
            \exp\left( - n \left[ \frac{ x}{  C_{\alpha,\beta}\Delta^p  \log(e+ \frac{C_{\alpha, \beta}\Delta^p   }{x}  )} \right]^{2} \right)    &   \text{if } \alpha =  2p ; \\            
            \exp \left( - n  \left[ \frac
  {x} {C_{p,\alpha,\beta}\Delta^p   } \right]^{\frac\alpha p} \right)    &   \text{if } \alpha >   2p.
		 \end{cases} 
\]
%where the constants $C_{\alpha,p,\beta}$  are  absolute constants %that only depends on $p$, $\alpha$, and $\beta$.
\end{proposition}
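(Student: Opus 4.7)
The plan is to apply Markov's inequality $\mathbb{P}(W_p^p(\mu,L_n) > x) \leq \|W_p^p(\mu,L_n)\|_r^r/x^r$ with the moment bounds of Proposition \ref{prop:bound-r-bounded}, and then to optimize in $r \geq 2$. The indicator $\mathbf{1}_{x \leq \Delta^p}$ is automatic: since $\mu$ and $L_n$ are both supported in $S$, every coupling is concentrated on $S \times S$ and $W_p^p(\mu,L_n) \leq \Delta^p$ almost surely, so the left-hand side vanishes for $x > \Delta^p$.

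For $\alpha < 2p$, set $y := x/(C_{\alpha,p,\beta} \Delta^p)$ so that Markov's bound reads $(r/(ny^2))^{r/2}$. The unconstrained minimizer is $r^\star = ny^2/e$, yielding the value $\exp(-ny^2/(2e))$. When $r^\star \geq 2$ this is exactly the announced bound after absorbing the constant into $C_{p,\alpha,\beta}$. When $r^\star < 2$, one has $\exp(-ny^2/(2e)) > e^{-1}$, so the prefactor $e^2$ makes the stated right-hand side exceed $1$ and the inequality reduces to the trivial $\mathbb{P} \leq 1$. The case $\alpha > 2p$ is handled in the same way: Markov's bound becomes $((r/n)^{p/\alpha}/y)^r$, the minimizer is $r^\star = n y^{\alpha/p}/e$ producing $\exp\bigl(-\frac{p}{e\alpha}\, n y^{\alpha/p}\bigr)$, and the $r^\star < 2$ regime is once again absorbed by the $e^2$ factor (using here that $2p/\alpha < 1$).

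The delicate case is the critical $\alpha = 2p$, where the moment bound carries the extra logarithmic factor $\log(e+n/r)$. Plugging into Markov gives $(\sqrt{r/n}\, \log(e+n/r)/y)^r$, and the exact optimizer is implicit: it satisfies $r = ny^2/(eL^2)$ with $L = \log(e+n/r)$, i.e., the fixed-point equation $L = \log(e + eL^2/y^2)$. I would sidestep this by picking $L_0 := \log(e + C_{\alpha,\beta} \Delta^p/x)$ with the constant $C_{\alpha,\beta}$ chosen large enough, setting $r := n y^2/(e L_0^2)$, and checking that $\log(e + n/r) \leq C L_0$. That inequality reduces to $L_0 \gtrsim 2\log L_0 + 2\log(1/y) + O(1)$, which holds by the elementary domination of $\log(1/y)$ over $\log\log(1/y)$ as soon as the constant inside $L_0$ is taken large enough. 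The resulting tail bound is of the announced form, and the $r < 2$ regime is again handled by the $e^2$ prefactor. Beyond this logarithmic bookkeeping, the main obstacle is purely computational rather than conceptual, since the structure of the argument is uniform across all three cases.
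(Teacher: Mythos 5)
Your proposal is correct and follows essentially the same route as the paper: Markov's inequality applied to the moment bounds of Proposition \ref{prop:bound-r-bounded}, an explicit near-optimal choice of $r$, and the $e^2$ prefactor absorbing the regime where the optimizer falls below $2$ (your justification of the indicator $\ind_{x\le \Delta^p}$ via $W_p^p(\mu,L_n)\le \Delta^p$ a.s.\ is also the right one, left implicit in the paper). The only cosmetic difference is in the critical case $\alpha=2p$: the paper takes $r$ as the exact solution of the implicit equation \eqref{solr} and then bounds $\log(e+n/r)$ a posteriori using $\log(e+u)\le 2u^{1/3}$, whereas you guess $r$ directly in terms of $L_0=\log(e+C_{\alpha,\beta}\Delta^p/x)$ and verify the same self-bounding estimate; both yield the stated bound after enlarging constants.
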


\begin{remark}
In the case where $E={\mathbb R}^d$ and the support $S$ is  bounded, one can always choose  $\alpha=d$ (even if it is not necessarily the optimal choice). For this choice $\alpha=d$, Proposition \ref{PropHoeff} is exactly  Proposition~10 of
\cite{fournier2015rate}. 
\end{remark}

\begin{proof}
$\bullet$ For $\alpha >2p$, we have $\left\| W_p^p(\mu, L_n) \right\|_r\leq C_{\alpha,p,\beta} \Delta^p    \left( \frac rn\right)^{\frac p\alpha}$. Applying Markov's inequality, we get $${\mathbb P}\left(  W_p^p(\mu, L_n)   > x \right) \leq \left(\frac{C_{\alpha,p,\beta} \Delta^p    \left( \frac rn\right)^{\frac p\alpha}}{x}\right)^r.$$
We now choose  $r\geq 2$, such that
$$\Delta^p C_{p,\alpha,\beta}\left(\frac { r} n \right)^{p/\alpha}=\frac x e \, , $$
provided the solution exists in $[2, \infty)$. This leads to the choice 
 $$r=n \left(\frac{x}{C_{p,\alpha,\beta}\Delta^pe}\right)^{\alpha/p} .$$
Consequently, 
\begin{eqnarray*}
{\mathbb P}\left(  W_p^p(\mu, L_n)   > x \right) 
&\leq& 
\exp \left(-
\frac
{n x^{\frac \alpha p}}
{(C_{p,\alpha,\beta}\Delta^p e)^{\frac\alpha p}} \right)
\ind_{\left\{n \left(\frac{x}{C_{p,\alpha,\beta}\Delta^pe}\right)^{\frac \alpha p}\geq 2\right\}} 
+ 
\ind_{\left\{n \left(\frac{x}{C_{p,\alpha,\beta}\Delta^pe}\right)^{\frac \alpha p} < 2\right\}} \\
&\leq& e^2 \exp \left( - n 
\left[ \frac
  {x} 
{C_{p,\alpha,\beta}\Delta^p  } \right]^{\frac\alpha p} \right) .
\end{eqnarray*}

\noindent $\bullet$ For $\alpha <2p$,  we have $\left\| W_p^p(\mu, L_n) \right\|_r \leq C_{\alpha,p,\beta} \Delta^p    \sqrt{ \frac rn}$. Applying Markov's inequality, we get 
$${\mathbb P} \left(  W_p^p(\mu, L_n)   > x \right) \leq \left(\frac{C_{\alpha,p,\beta} \Delta^p    \sqrt{ \frac rn}}{x}\right)^r.$$
We now choose $r \geq 2$ such that
$$\Delta^p C_{p,\alpha,\beta}\sqrt{ \frac rn}  =
\frac x e \, ,$$
provided the solution exists in $[2, \infty)$. This leads to the choice 
$$r=n \left(\frac{x}{C_{p,\alpha,\beta}\Delta^p e}\right)^{2} .$$
Hence, 
\begin{eqnarray*} 
{\mathbb P}\left(  W_p^p(\mu, L_n)   > x \right) 
&\leq& 
\exp \left( -\frac{n x^{2}}{(C_{p,\alpha,\beta}\Delta^pe)^{2}} \right)
\ind_{\left\{n\left(\frac{x}{C_{p,\alpha,\beta}\Delta^pe}\right)^{2}\geq 2\right\}}
+ \ind_{\left\{n\left(\frac{x}{C_{p,\alpha,\beta}\Delta^pe}\right)^{2}< 2\right\}} \\
&\leq& e^2 \exp \left( -n \left[ \frac{ x}{C_{p,\alpha,\beta}\Delta^p  } \right]^{2} \right).
\end{eqnarray*}
\noindent $\bullet$ For $\alpha  = 2p $, we have $\left\| W_p^p(\mu, L_n) \right\|_r \leq C_{\alpha, \beta} \Delta^p    \sqrt{ \frac rn}\log(e +\frac n r)$. Applying Markov's inequality, we get $$ {\mathbb P} \left(  W_p^p(\mu, L_n)   > x \right) \leq \left(\frac{C_{\alpha,\beta} \Delta^p    \sqrt{ \frac rn} \log(e +\frac n r)}{x}\right)^r.$$
Note first that the function $t \rightarrow t \log(e+t^{-1})$ is strictly increasing from $(0, \infty)$ to $(0, \infty)$. We now choose $r \geq 2$ such that
$$ C_{\alpha,\beta} \Delta^p\sqrt{ \frac rn} \log \left (e+  \frac nr \right ) =
\frac x e \, ,$$
provided the solution exists in $[2, \infty)$. The unique solution of this equation satisfies
\begin{equation} \label{solr}
 r = n \left(\frac{x}{C_{\alpha,\beta}\Delta^p e \log(e+  \frac{n}{r}) }\right)^{2} .
 \end{equation}
Hence, for this choice of $r$,
\begin{multline} 
{\mathbb P}\left(  W_p^p(\mu, L_n)   > x \right) 
\leq 
\exp \left( -\frac{n x^{2}}{(C_{\alpha,\beta}\Delta^pe \log(e+  \frac nr) )^{2}} \right)
\ind_{\left\{n\left(\frac{x}{C_{\alpha,\beta}\Delta^pe} \log(e+  \frac nr) \right)^{2}\geq 2\right\}}
+ \ind_{\left\{n\left(\frac{x}{C_{\alpha,\beta}\Delta^pe \log(e+  \frac nr)}\right)^{2}< 2\right\}} \\
\leq e^2 \exp \left( - n \left[ \frac{x}{ C_{\alpha,\beta}\Delta^p e \log(e+  \frac nr)} \right]^{2} \right) \, .  \label{cas2pHoeff1}
\end{multline}
It remains to find a suitable upper bound for the quantity $\log(e+ \frac n r)$, where $r$ is the solution of \eqref{solr}.
If $\frac nr \leq e$, we simply write
\begin{equation} \label{cas2pHoeff2}
\log \left (e+  \frac nr \right ) \leq \log (2 e) \leq 2 \log(e) \leq  2 \log \left( e + \frac{C_{\alpha,\beta}\Delta^p e }{x}  \right) \, .
\end{equation}
Otherwise, we use the fact that $ \log(e + u ) \leq 2 u ^{1/3} $ for $u \geq e$. For $r$ satisfying 
\eqref{solr} and such that $\frac nr \geq e$
\begin{eqnarray*} 
\log \frac nr &=&
2 \log \left(  \frac{C_{\alpha,\beta}\Delta^p e }{x}  \right) +
2 \log \left(  \log\left (e+  \frac nr \right)  \right) \nonumber \\
 &\leq &
2 \log \left( e + \frac{C_{\alpha,\beta}\Delta^p e }{x}  \right) +
2 \log \left( 2 \left(\frac nr \right) ^{1/3}   \right) \nonumber \\
 &\leq & 2 \log \left( e + \frac{C_{\alpha,\beta}\Delta^p e }{x}  \right) +
2 \log  2+ \frac 23 \log  \frac nr    \, ,
\end{eqnarray*}
and  consequently
\begin{equation} \label{cas2pHoeff3}
\log \left(e +\frac nr \right) \leq 2 \log \frac nr \leq  12 \log \left( e + \frac{C_{\alpha,\beta}\Delta^p e }{x}  \right) + 12 \log  2  \leq 24 \log \left( e + \frac{C_{\alpha,\beta}\Delta^p e }{x}  \right)  \, .
\end{equation}
Finally, combining \eqref{cas2pHoeff1},  \eqref{cas2pHoeff2} and \eqref{cas2pHoeff3}, 
\begin{eqnarray*} 
{\mathbb P}\left(  W_p^p(\mu, L_n)   > x \right) 
&\leq& e^2 \exp\left( - n \left[ \frac{ x}{ C_{\alpha,\beta}\Delta^p \log(e+ \frac{C_{\alpha,\beta}\Delta^p  }{x}  )} \right]^{2} \right).  
\end{eqnarray*}
\end{proof}

\subsection{Maximal Inequalities and almost sure results}

The proof of Proposition \ref{prop:bound-r-bounded} is based on a version of Rosenthal's inequality due to  \cite{pinelis1994optimum}, as recalled in Theorem 
\ref{Lem-Ros} of the appendix. This inequality applies to the maximum of partial sums, which means that the inequalities of Propositions \ref{prop:bound-r-bounded} and \ref{PropHoeff} are true with $n^{-1} \sup_{1 \leq k \leq n} k W_p^p(\mu, L_k)$ 
instead of $W_p^p(\mu, L_n)$. 

From the maximal version of Proposition 3, we can deduce almost sure results for the convergence of $L_n$ to $\mu$ in Wasserstein distance. These almost sure results are given in the next Proposition.

\begin{proposition}\label{ASresults}
Under Assumption~\eqref{eq:dimmetric}, the following result holds:   
\begin{itemize}
    \item  If $ \alpha < 2p$, then almost surely
    $ \displaystyle 
    \limsup_{n \rightarrow \infty} \sqrt{\frac{n}{\log \log n }} \ W_p^p(\mu, L_n) \leq C_{p, \alpha, \beta} \Delta^p   \, .
    $
    \item  If $ \alpha = 2p$, then almost surely
    $ \displaystyle 
    \limsup_{n \rightarrow \infty} \sqrt{\frac{n}{\log n \log \log n }} \ W_p^p(\mu, L_n) \leq  C_{\alpha, \beta} \Delta^p   \, .
    $
    \item  If $ \alpha > 2p$, then almost surely
    $ \displaystyle 
    \limsup_{n \rightarrow \infty}\left (\frac{n}{ \log \log n } \right )^{p/\alpha} \ W_p^p(\mu, L_n) \leq C_{p, \alpha, \beta} \Delta^p   \, .
    $
\end{itemize}
The constants $C_{p, \alpha, \beta}$ are the constants given in Proposition \ref{PropHoeff}.
\end{proposition}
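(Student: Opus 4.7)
The plan is a Borel--Cantelli argument on a dyadic subsequence, using the maximal version of Proposition~\ref{PropHoeff} flagged just before the statement. The starting point is that the Rosenthal bound of \cite{pinelis1994optimum} (Theorem~\ref{Lem-Ros}) controls the $L^r$ norm of the running maximum $\max_{1\leq k\leq n}|S_k|$ with the same constants as $\|S_n\|_r$. Re-running the computation of Proposition~\ref{prop:wppr_bounded}, applied to each partial sum $S_k(Q)=\sum_{i=1}^{k}(\ind_Q(X_i)-\mu(Q))$ that appears on the right-hand side of the dyadic transport inequality, therefore yields the same moment estimate for $n^{-1}\max_{1\leq k\leq n}kW_p^p(\mu,L_k)$ as for $W_p^p(\mu,L_n)$. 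Optimising in $r\geq 2$ exactly as in Proposition~\ref{PropHoeff} then upgrades the deviation bound to
\begin{equation*}
\mathbb{P}\!\left(\frac{1}{n}\max_{1\leq k\leq n}kW_p^p(\mu,L_k)>x\right)\leq e^2\,\Phi_\alpha(n,x),
\end{equation*}
where $\Phi_\alpha(n,x)$ denotes the right-hand side of Proposition~\ref{PropHoeff} in each of the three regimes.

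I then fix $n_j=2^j$ and, in each regime, set $x_j$ equal to the claimed a.s.\ rate evaluated at $n_{j+1}$, multiplied by a free constant $K$ to be chosen large: namely $x_j=K\Delta^p\sqrt{\log\log n_{j+1}/n_{j+1}}$ for $\alpha<2p$, $x_j=K\Delta^p(\log\log n_{j+1}/n_{j+1})^{p/\alpha}$ for $\alpha>2p$, and the analogous choice at $\alpha=2p$. Substituting into $\Phi_\alpha(n_{j+1},x_j)$ produces an exponent proportional to $\log\log n_{j+1}$, so that $\Phi_\alpha(n_{j+1},x_j)\leq(\log n_{j+1})^{-K'}$ with $K'=K'(K,p,\alpha,\beta)\to\infty$ as $K\to\infty$. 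Since $\log n_{j+1}=(j+1)\log 2$, choosing $K$ large enough that $K'>1$ makes $\sum_j\mathbb{P}(\,\cdot\,)$ finite, and Borel--Cantelli yields almost surely $n_{j+1}^{-1}\max_{1\leq k\leq n_{j+1}}kW_p^p(\mu,L_k)\leq x_j$ for all sufficiently large~$j$.

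To pass from the dyadic subsequence to every~$n$, note that any $n\in[n_j,n_{j+1}]$ satisfies
\begin{equation*}
W_p^p(\mu,L_n)\leq \frac{n_{j+1}}{n}\cdot\frac{1}{n_{j+1}}\max_{1\leq k\leq n_{j+1}}kW_p^p(\mu,L_k)\leq 2x_j,
\end{equation*}
and $n_{j+1}\leq 2n$; substituting and absorbing the numerical factor into $C_{p,\alpha,\beta}$ gives the three limsup statements. I expect the main technical hurdle to lie in the boundary case $\alpha=2p$: the Hoeffding exponent there carries the correction $\log(e+C\Delta^p/x)$, and to extract the stated $\sqrt{\log n\log\log n/n}$ rate one has to invert carefully the relation $n[x/\log(e+C\Delta^p/x)]^2\asymp\log\log n$ when choosing $x_j$, whereas in the two extreme regimes the correct $x_j$ falls out immediately from the explicit form of $\Phi_\alpha$ in Proposition~\ref{PropHoeff}.
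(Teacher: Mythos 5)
Your strategy is essentially the paper's: the maximal version of Proposition~\ref{PropHoeff} (justified, as you do, by the fact that Theorem~\ref{Lem-Ros} already controls the running maximum of the partial sums entering the dyadic transport bound), Borel--Cantelli along a geometric subsequence, and monotonicity of $n \mapsto \sup_{1\leq k\leq n} kW_p^p(\mu,L_k)$ to interpolate between block endpoints. The only cosmetic difference is that the paper works with blocks $[a^N]$, $a>1$, and threshold $(C_{p,\alpha,\beta}\Delta^p+\varepsilon)\cdot\mathrm{rate}(n)$, then lets $a\to1$ and $\varepsilon\to0$, which recovers exactly the constant of Proposition~\ref{PropHoeff} as the statement claims; your fixed dyadic blocks and large multiplier $K$ give the same rates with a larger, unspecified constant, which is immaterial given the paper's declared indifference to constants. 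For $\alpha<2p$ and $\alpha>2p$ your argument is complete and correct: with $x_j=K\Delta^p\sqrt{\log\log n_{j+1}/n_{j+1}}$ (resp.\ $K\Delta^p(\log\log n_{j+1}/n_{j+1})^{p/\alpha}$) the exponent in Proposition~\ref{PropHoeff} equals a constant multiple of $\log\log n_{j+1}\asymp\log j$, which exceeds $(1+\delta)\log j$ for $K$ large, so the probabilities are summable.

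The genuine gap is the step you deferred in the case $\alpha=2p$. If you carry out the inversion, the choice $x\asymp\Delta^p\sqrt{\log n\log\log n/n}$ gives $\log(e+C\Delta^p/x)\sim\tfrac12\log n$, hence
\begin{equation*}
n\left[\frac{x}{C\Delta^p\log\bigl(e+C\Delta^p/x\bigr)}\right]^{2}\asymp\frac{\log n\,\log\log n}{(\log n)^{2}}=\frac{\log\log n}{\log n}\longrightarrow 0,
\end{equation*}
so the probabilities along the dyadic subsequence do not even tend to zero, let alone sum. Solving $\sqrt{n}\,x\asymp\sqrt{\log\log n}\,\log(e+1/x)$ shows that the rate Proposition~\ref{PropHoeff} actually delivers in this regime is $x\asymp(\log n)\sqrt{\log\log n/n}$, which exceeds the stated $\sqrt{\log n\log\log n/n}$ by a factor $\sqrt{\log n}$; your argument, once completed, would therefore prove the second bullet only with the normalization $\sqrt{n/((\log n)^{2}\log\log n)}$. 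To be fair, the paper's own proof treats only $\alpha<2p$ and declares the other cases ``similar,'' so it does not resolve this either; but as written, neither your argument nor the paper's yields the $\alpha=2p$ bullet in its stated form, and you should not present the ``careful inversion'' as a routine step when it in fact fails for the claimed rate.
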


\begin{remark} In the case where $E={\mathbb R}^d$ and 
if $d>2p$,  \cite{BaBo13} (see their Theorem 2) proved that, almost surely, 
\begin{equation}\label{BB} 
\beta'_p(d) \int (f_\mu(x))^{(d-p)/d}  \leq 
 \liminf_{n \rightarrow \infty}  n^{p/d} W_p^p(L_n, \mu)  \leq 
 \limsup_{n \rightarrow \infty}  n^{p/d} W_p^p(L_n, \mu) \leq  \beta_p(d) \int (f_\mu(x))^{(d-p)/d}  \, ,
\end{equation}
where $\beta(d), \beta'(d)$ depend only on $d$, and $f_\mu$ is the density of the absolutely continuous part of $\mu$ (note that this result remains true in case of unbounded support, under some moment conditions on $\mu$). In particular, we see that $n^{p/d} W_p^p(L_n, \mu)$ converges almost surely to $0$ if $\mu$ is singular with respect to the Lebesgue measure
on ${\mathbb R}^d$. Now, if the covering dimension $\alpha$ of $S$ is such that $2p< \alpha < d$ (so that $\mu$ is singular with respect to the Lebesgue measure
on ${\mathbb R}^d$), then we get an almost sure rate of convergence of order $\left ( n/  \log \log n  \right )^{p/\alpha}$, which is more precise than the result of \cite{BaBo13}. However, if $\alpha=d$, we get an extra $\log \log n $ term compared to \eqref{BB}. So one can conjecture that, perhaps with mild additional conditions, the term $\log \log n$ of Proposition \ref{ASresults} could be removed in the case where $\alpha >2p$.
%provided $\|X\|_{rp} < \infty$ for some $r>4d/(d-2p)$,
\end{remark}

\begin{remark}
In the case where $E={\mathbb R}$, it is well known that 
$W_1(L_n, \mu)= \int |F_n(t)-F(t)| dt $, where $F_n$ and $F$ are respectively the cumulative distribution functions of $L_n$ and $\mu$. 
We can easily deduce from this representation and the compact law of the iterated logarithm in the cotype 2 Banach space ${\mathbb L}^1(dt)$ (see Chapters 8 and 10 in \cite{LT91}) that the sequence 
\begin{equation*}
    \left \{\frac{\sqrt{n}}{\sqrt{ \log \log n}}W_1(L_n, \mu)\right \}_{n \geq 0}
\end{equation*}
is almost surely relatively compact  (in fact, this result remains true in case of unbounded support, under the condition $\int \sqrt{H(t)} dt < \infty$, where $H$ is the tail distribution defined in \eqref{DefTail}). This gives a particular case where the rate of Proposition \ref{ASresults} (case $\alpha < 2p$) cannot be improved.
\end{remark}

\begin{proof}
Let us do the proof in the case $\alpha < 2p$, the other cases being similar. 
Let $\varepsilon >0$. From the maximal version of the inequality given in Proposition \ref{PropHoeff} (case $\alpha < 2p$), we get 
\begin{equation}\label{BK}
\sum_{n \geq 1} \frac 1 n {\mathbb P} \left ( \frac 1 n  \sup_{1 \leq k \leq n} k W_p^p(\mu, L_k) > (C_{p, \alpha, \beta} \Delta^p   + \varepsilon) \sqrt {\frac{\log \log n}{n}} \right ) < \infty \, .
\end{equation}
Let $a>1$. The sequence $(\sup_{1 \leq k \leq n} k W_p^p(\mu, L_k))_{n \geq 1}$ being non-decreasing, \eqref{BK} is equivalent to 
$$
\sum_{N \geq 1}  {\mathbb P} \left (   \sup_{1 \leq k \leq [a^N]} k W_p^p(\mu, L_k) > (C_{p, \alpha, \beta} \Delta^p   + \varepsilon) a^{N/2}\sqrt{\log N} \right ) < \infty \, .
$$
By Borel-Cantelli, we deduce that 
$$
 \limsup_{N \rightarrow \infty} \frac{a^{-N/2}}{\sqrt{\log N }} \sup_{1 \leq k \leq [a^N]} k W_p^p(\mu, L_k) \leq  (C_{p, \alpha, \beta} \Delta^p   + \varepsilon) \ \ \text{almost surely,}
$$
which implies that
$$
 \limsup_{n \rightarrow \infty} \sqrt{\frac{n}{\log \log n }} W_p^p(\mu, L_n) \leq \sqrt a (C_{p, \alpha, \beta} \Delta^p   + \varepsilon) \ \ \text{almost surely.}
$$
This being true for any $\varepsilon >0$ and $a>1$, the result follows in the case $\alpha < 2p$.
\end{proof}

\section{Unbounded support} \label{SecUnbounded}

Let us set a reference point  $x_0 $ in $E$.
In the case where the probability measure $\mu$ has an unbounded support $S$, as  in~\cite{boissard2014mean} we assume that, for any $r>0$, $S \cap B(x_0, r)$ is of finite dimensional type. More precisely, we assume  the following uniform control in terms of covering numbers: 
there exist $\alpha, \beta >0$ such that for any $r>0$ and any $\delta \in (0,2r] $,
\begin{equation}
\label{eq:dimmetric2}
    N(S\cap B(x_0, r), \delta) \leq  \beta \left( \frac {2r} \delta \right) ^\alpha \, .
\end{equation}

Recall that the tail distribution (with respect to the reference point $x_0$) $H$ of the measure $\mu$ has been defined by \eqref{DefTail}, and that the weak moment $\|X\|_{q, w}^q$ of order $q\geq 1$ of a random variable $X$ distributed according to $\mu$ has been defined by \eqref{DefWeakMoment}.
% We now define the tail function and the weak moments of $\mu$ using the reference point $x_0$, omitting the latter in the notation. The tail  of the distribution $\mu$ is defined by
%$$
 % H(t)=  \mu(B(x_0,t)^c).  
%$$
Let us also introduce the quantities, (if they are finite):
$$ I_{\alpha,p} =  \int_0^{+\infty}  H(t)^{\frac{\alpha-p}{\alpha}} t^{p-1} dt \quad \text{and} \quad
I_{2p,p} =  \int_0^{+\infty}  \sqrt{ H(t)} t^{p-1} dt \, .
$$
%For any $q \geq 1$, the weak moment of order $q$ of a random %variable $X$ with distribution  $\mu$ is defined by 
%$$\|X\|_{q, w}^q:= \sup_{t \geq 0} t^q H(t) \, .$$
%and the strong moment of order $q \geq 1$ is defined by 
%$$
% \|X\|_{q}^q = \int d^q(x_0,x) \mu(dx) =  q\int_0^\infty %t^{q-1} H(t) dt  \, .
%$$
%\bert{faire le lien ici entre les $I$ et les moments faibles}

%\bert{Note that $I_{\alpha, p}  \geq \|d(x_0, X)\|_p^{p}.$}

\subsection{Fuk-Nagaev type inequalities under weak moment assumptions}

\subsubsection{Statement of the theorems}

\begin{theorem}
\label{theo:dev_unbounded-alphageq2p} Let $p\geq 1$, and
assume that \eqref{eq:dimmetric2} is satisfied for some $\alpha > 2p$.
\begin{itemize}
 \item If $\mu$ has a weak moment of order $rp$ with  $r >2$, then, for any $q > r $ and any $x>0$,
\[
{\mathbb P}\left(  W_p^p(\mu, L_n)   > x \right)  \leq
e^2 \exp \left( - n C_{p,\alpha,\beta}  \left[ \frac {x}{  I_{\alpha,p}   } \right]^{\frac\alpha p} \right){\bf 1}_{x \leq C_p\|d(x_0,X)\|_p^p}
+ 
C_{p,q,r}\left\{     \frac { I_{2p,p}^q}{x^q n^{q/2}}  
     +  \frac {\|X\|^{rp}_{rp, w}}{x^r n^{ r-1}} \right\} .
\]
\item If $I_{2p,p}=\int_0^\infty  \sqrt{H(t)} t^{p-1} dt <\infty$, then, for any $x>0$,
\[
{\mathbb P}\left(  W_p^p(\mu, L_n)   > x \right)  \leq  e^2 \exp \left( - n C_{p,\alpha,\beta} 
\left[ \frac
  {x} 
{  I_{\alpha,p}   } \right]^{\frac\alpha p} \right) {\bf 1}_{x \leq C_p\|d(x_0,X)\|_p^p} +   C_p \frac{I_{2p,p}^2}{x^2n}\, .
\]
    \item  If $\mu$ has a weak moment of order $rp$ with  $r \in (\alpha/(\alpha-p),2)$, then, for any $x>0$,
\[
{\mathbb P}\left(  W_p^p(\mu, L_n)   > x \right)  \leq  
e^2 \exp \left( - n C_{p,\alpha,\beta}  \left[ \frac {x}{  I_{\alpha,p}   } \right]^{\frac\alpha p} \right){\bf 1}_{x \leq C_p\|d(x_0,X)\|_p^p}
+ 
 C_{p,r}  \frac{\|X\|^{rp}_{rp, w}}{x^r n^{r-1}}.
\]
    \item If $I_{\alpha,p}=\int_0^\infty   H(t)^{(\alpha-p)/\alpha }  t^{p-1} dt <\infty$, then, for any $x>0$, 
\[
{\mathbb P}\left(  W_p^p(\mu, L_n)   > x \right)  \leq  C_{p,\alpha,\beta}   \frac{I_{\alpha,p}^{\frac{\alpha}{\alpha-p}} }{x^{\frac{\alpha}{\alpha-p}}n^{\frac{p}{\alpha-p}}}.
\]
\item If  $\mu$ has a weak moment of order $rp$ with  $r \in (1,\alpha/(\alpha-p))$, then, for any $x>0$, 
\[
{\mathbb P}\left(  W_p^p(\mu, L_n)   > x \right)  \leq C_{p,\alpha,\beta,r}   \frac{\|X\|^{rp}_{rp, w}}{x^r n^{r-1}}.
\]
\end{itemize}   
%The constants $C_{\cdot}$ are  absolute constants that only %depends on $p$, $\alpha$,  $\beta$ and $q$ and $r$.
\end{theorem}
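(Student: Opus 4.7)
The plan is to use the three-term decomposition \eqref{borne3termes-bis} of $W_p^p(\mu,L_n)$ from Section~\ref{Sec4}, which has already been set up for the earlier bullets of Theorem~\ref{theo:dev_unbounded-alphageq2p}. Writing $W_p^p(\mu,L_n)\leq T_1+T_2+T_3$, where $T_1$ is a ``main'' transport term controlled via Lemma~\ref{expobound-mainterm}, $T_2$ is a ``mass-imbalance'' term across a dyadic family of annuli, and $T_3$ collects the direct contribution of observations with $d(X_i,x_0)>R$ for a suitable truncation radius $R$, the union bound yields
\begin{equation*}
{\mathbb P}(W_p^p(\mu,L_n)>x)\leq\sum_{j=1}^3{\mathbb P}(T_j>x/3),
\end{equation*}
so the task reduces to bounding each piece separately and then optimising in $R$.

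The guiding intuition in this regime $r\in(1,\alpha/(\alpha-p))$ is the ``single heavy observation'' heuristic: one $X_i$ with $d(X_i,x_0)^p\gtrsim nx$ already produces a deviation of order $x$ in $W_p^p(\mu,L_n)$, and a union bound combined with the weak moment gives a probability of order $n\cdot\|X\|_{rp,w}^{rp}/(nx)^r=\|X\|_{rp,w}^{rp}/(x^r n^{r-1})$, matching the target rate. In this regime, the bound of Lemma~\ref{expobound-mainterm} cannot be applied globally because the integral $I_{\alpha,p}$ may diverge (when $H(t)\sim t^{-rp}$, the integrand $H(t)^{(\alpha-p)/\alpha}t^{p-1}$ decays like $t^{p-1-rp(\alpha-p)/\alpha}$ and fails to be integrable at infinity precisely when $r\leq\alpha/(\alpha-p)$), so truncation is indispensable. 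I would then bound $T_3$ by a direct Markov/weak-moment argument, bound $T_1$ by Markov applied to the $L^r$-estimate of Proposition~\ref{prop:bound-r-bounded} (with diameter proportional to $R$ and covering dimension $\alpha$), and bound $T_2$ by a Rosenthal-type estimate on the annular mass discrepancies $|L_n(A_k)-\mu(A_k)|$ combined with the weak-moment decay of $\mu(A_k)$.

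The main obstacle will be the calibration of $R$ so that all three terms contribute at the common rate $\|X\|_{rp,w}^{rp}/(x^r n^{r-1})$. The condition $r<\alpha/(\alpha-p)$ appears precisely as a compatibility condition at this stage: it is equivalent to $r-1<rp/\alpha$, which is the exponent comparison that allows the polynomial $L^r$-bounds on $T_1$ and $T_2$ to be dominated by the ``single heavy observation'' rate $n^{-(r-1)}$ governing $T_3$. A case split in $x$ (the statement being trivial whenever the target bound exceeds~$1$) together with a careful bookkeeping of constants in terms of $p,\alpha,\beta,r$ and $\|X\|_{rp,w}$ should complete the argument. Unlike the previous bullets, no Bernstein- or Fuk--Nagaev-type refinement is needed, as the final bound contains no exponential component; this ``heaviest tail'' regime is entirely governed by the contribution of the extremal observation.
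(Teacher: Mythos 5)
Your proposal is a plan rather than a proof: no estimate is actually carried out, and it only addresses the fifth bullet (the regime $r\in(1,\alpha/(\alpha-p))$), whereas the statement has five cases, four of which require the exponential main-term bound of Lemma~\ref{expobound-mainterm} and their own remainder estimates. Even restricted to that bullet, your description of the decomposition does not match \eqref{borne3termes-bis}, which you say you are using: its three terms are the ring-wise main term $\sum_i \lambda_i W_p^p(L_n^i,\mu^{K_i})$ and the two mass-imbalance terms in $\gm$ and $\gn$; there is no separate term ``collecting the contribution of observations with $d(X_i,x_0)>R$''. The truncation radius $M=(nx)^{1/p}$ enters only inside the treatment of each term, where the sum over rings is cut at index $K$ and the tail is handled by Markov at order $1$ together with $H(t)\leq t^{-rp}\|X\|^{rp}_{rp,w}$. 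Your ``single heavy observation'' heuristic correctly predicts the rate $\|X\|^{rp}_{rp,w}/(x^r n^{r-1})$, and your observation that $r<\alpha/(\alpha-p)$ is equivalent to $r-1<rp/\alpha$ is exactly the exponent comparison the paper uses to absorb the first ring's contribution; but a heuristic plus a rate check is not a proof.

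The concrete gap is in your treatment of the main term. You propose to bound it ``by Markov applied to the $L^r$-estimate of Proposition~\ref{prop:bound-r-bounded} with diameter proportional to $R$''. Proposition~\ref{prop:bound-r-bounded} requires moment order at least $2$ while your $r$ lies in $(1,2)$, and, more seriously, a global application on $B(x_0,R)$ yields a bound of order $R^p(q/n)^{p/\alpha}$, which with $R=(nx)^{1/p}$ equals $x\,n^{1-p/\alpha}\cdot q^{p/\alpha}$ and diverges with $n$; Markov then gives nothing. What saves the argument in the paper is the ring-by-ring conditional application of Proposition~\ref{prop:bound-r-bounded} (conditioning on the local sample size $nL_n(K_i)$), which produces the weights $\mu(K_i)^{1/q}$ that decay fast enough to offset the growth of $r_i^p=2^{ip}$, combined with Markov at a carefully chosen intermediate order $q=\tfrac12\bigl(r+\tfrac{\alpha}{\alpha-p}\bigr)\in(r,2)$ satisfying $\alpha<\tfrac{q}{q-1}p$, so that $\lambda_i(nL_n(K_i))^{-p/\alpha}\leq \mu(K_i)^{1/q}n^{-(q-1)/q}$. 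This device, and the analogous order-$2$ Markov plus von Bahr--Esseen-type treatment of the $\gm,\gn$ terms, is the actual content of the proof and is entirely absent from your plan.
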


\begin{theorem}
\label{theo:dev_unbounded-alphaleq2p} 
Let $p\geq 1$, and assume that \eqref{eq:dimmetric2} is satisfied for some $\alpha < 2p$. 
\begin{itemize}
\item If $\mu$ has a weak moment of order $rp$  with $r>2$, then, for any $q  > r$ and any $x>0$,
\[
{\mathbb P}(W_p^p(L_n,\mu) >  x) \leq
 e^2 \exp \left( - n C_{p,\alpha,\beta} \left[ \frac
  {x} { I_{2p,p}  } \right]^{2} \right){\bf 1}_{x \leq C_p\|d(x_0,X)\|_p^p}
+ C_{p,q,r}\left\{   \frac { I_{2p,p}^q }{x^q n^{q/2}}  
     +  \frac {\|X\|^{rp}_{rp, w} }{x^r n^{ r-1}} \right\} .
\]
    \item If $I_{2p,p}=\int_0^\infty  \sqrt{H(t)} t^{p-1} dt <\infty$, then, for any $x>0$,
\[
{\mathbb P}\left(  W_p^p(\mu, L_n)   > x \right)  \leq C_{p,\alpha,\beta}   \frac{I_{2p,p}^2}{x^2n} .
\]
 \item If $\mu$ has a weak moment of order $rp$ with $r \in (1,2) $, then,  for any $x>0$,
\[
{\mathbb P}\left(  W_p^p(\mu, L_n)   > x \right)  \leq   C_{p,\alpha,\beta,r}   \frac{\|X\|^{rp}_{rp, w}}{x^r n^{r-1}}.
\]
\end{itemize}   
%The constants $C_{\cdot}$ are  absolute constants that only %depends on $p$, $\alpha$,  $\beta$ and $q$ and $r$.
\end{theorem}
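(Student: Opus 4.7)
The plan is to combine the three-term decomposition \eqref{borne3termes-bis} with a dyadic layering across the annuli $A_k = B(x_0, 2^{k+1}R_0) \setminus B(x_0, 2^k R_0)$, $k \geq 0$, and with the moment/concentration machinery developed in Section \ref{Sec2}. The three contributions to bound separately are the principal term (Wasserstein cost inside each shell), the empirical fluctuation of the shell masses, and the far-tail moment; they are then merged via $\{A+B+C>x\}\subset\{A>x/3\}\cup\{B>x/3\}\cup\{C>x/3\}$.

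The principal term is controlled through the key Lemma \ref{expobound-mainterm}. On each annulus $A_k$, hypothesis \eqref{eq:dimmetric2} gives $N(A_k,\delta) \leq \beta(2^{k+2}R_0/\delta)^\alpha$, so Proposition \ref{prop:bound-r-bounded} applies shell by shell; since $\alpha<2p$ each shell contributes a Gaussian-type $L^r$ bound of order $(2^k R_0)^p \sqrt{\mu(A_k)\,r/n}$. Summing in $k$ with $\mu(A_k)\leq H(2^k R_0)$, together with the elementary comparison
\begin{equation*}
\sum_k (2^k R_0)^p \sqrt{H(2^k R_0)} \leq C_p \,I_{2p,p},
\end{equation*}
produces $\|W_p^p(\mu,L_n)^{\text{main}}\|_r \leq C_{p,\alpha,\beta}\,I_{2p,p}\sqrt{r/n}$. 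Optimising in $r\geq 2$ exactly as in the proof of Proposition \ref{PropHoeff} (case $\alpha<2p$) converts this into the Hoeffding-type exponential term $\exp(-nC_{p,\alpha,\beta}[x/I_{2p,p}]^2)$ appearing in the first bullet; the indicator $\ind_{x \leq C_p\|d(x_0,X)\|_p^p}$ marks the regime outside which this exponential bound is already weaker than a direct Markov bound and can be absorbed.

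The two residual contributions (shell-mass fluctuations and far-tail moment) are handled by Fuk--Nagaev-type inequalities applied to the i.i.d.\ sums $\sum_i (\ind_{X_i \in A_k}-\mu(A_k))$ and $\sum_i \bigl(d(x_0,X_i)^p\ind_{d(x_0,X_i)>R} - \E[\cdots]\bigr)$, with the truncation radius $R$ tuned to $x$ and $n$. Three sub-cases arise. When $\mu$ has a weak moment of order $rp$ with $r>2$, the Fuk--Nagaev inequality of \cite{BehaviorDedeckerMerl19} (Theorem 2.1), or equivalently Theorem 2 of \cite{fournier2015rate}, splits the deviation into a sub-Gaussian piece of size $I_{2p,p}^q/(x^q n^{q/2})$ for every $q>r$ and a heavy-tail piece of size $\|X\|_{rp,w}^{rp}/(x^r n^{r-1})$. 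When $I_{2p,p}<\infty$, a plain Rosenthal bound at exponent $2$ suffices and yields the single term $I_{2p,p}^2/(x^2n)$; here the exponential coming from the principal term is already dominated by this polynomial rate and can be absorbed into the stated bound. When $1<r<2$, the sum is no longer square-integrable and one invokes the weak-$L^r$ Fuk--Nagaev estimate of \cite{BehaviorDedeckerMerl19} (Theorem 2.1), which outputs only the heavy-tail term $\|X\|_{rp,w}^{rp}/(x^r n^{r-1})$.

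The main obstacle is the bookkeeping in the dyadic summation of the principal term: the inner parameter $k^*$ of Proposition \ref{prop:wppr_bounded} has to be chosen as a function of the shell mass $\mu(A_k)$ so that each shell contributes of the correct order in $\mu(A_k)$ and $r/n$, and then one must verify that the accumulated geometric constants recombine exactly into the integral $I_{2p,p}$. A secondary difficulty is to align the truncation radius used in the Fuk--Nagaev step with the scale at which the Hoeffding-type bound becomes uninformative, so that the exponential and polynomial contributions combine cleanly; in the second bullet this alignment degenerates and the exponential term disappears entirely.
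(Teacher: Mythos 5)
Your overall architecture matches the paper's: the decomposition \eqref{borne3termes-bis} into a main term and two remainder terms, Lemma \ref{expobound-mainterm} for the main term, and a truncated annulus sum with moment inequalities at two different orders for the remainders. The first two bullets are essentially reproduced correctly (modulo one small misreading: the indicator ${\bf 1}_{x \leq C_p\|d(x_0,X)\|_p^p}$ is not an ``absorb the weaker regime'' device but comes from the almost sure bound \eqref{trivial}, which makes the main-term probability vanish identically for larger $x$). However, there is a genuine gap in the third bullet. For $r \in (1,2)$ a weak moment of order $rp$ does \emph{not} imply $I_{2p,p}<\infty$ (one only gets $\sqrt{H(t)}\,t^{p-1} \lesssim t^{p-1-rp/2}$, which is integrable at infinity only when $r>2$), so Lemma \ref{expobound-mainterm} is unavailable and your uniform treatment of the principal term breaks down. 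Consistently, the stated bound for $r\in(1,2)$ contains no exponential term at all: the main term itself must be shown to be $O(\|X\|^{rp}_{rp,w}/(x^r n^{r-1}))$. The paper does this by a separate three-way split of $\sum_i \lambda_i W_p^p(L_n^i,\mu^{K_i})$ (first annulus at order $r$, annuli $2,\dots,K$ at order $2$ via the conditional bound \eqref{ineq:conditnLKi2}, far annuli at order $1$), followed by the truncation $M=(nx)^{1/p}$. Your proposal contains no substitute for this step.

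A secondary problem is your plan to treat the near-field remainder by applying a scalar Fuk--Nagaev inequality to a single i.i.d.\ sum such as $\sum_j\bigl(d(x_0,X_j)^p\ind_{d(x_0,X_j)>R}-\E[\cdots]\bigr)$. The quantity actually to be controlled is $\sum_{i\le K}\bigl|L_n(K_i)-\mu(K_i)\bigr|\,r_i^p$, a sum of \emph{absolute values} of centered averages, one per annulus; it dominates, rather than is dominated by, the single centered sum $\bigl|\sum_{i\le K}(L_n(K_i)-\mu(K_i))r_i^p\bigr|$, so the reduction goes the wrong way. The paper instead takes the $L^q$ norm inside the sum over annuli (triangle inequality), applies Burkholder to the first annulus and Rosenthal to each subsequent annulus separately, and only then compares the resulting sums with the integrals $I_{2p,p}$ and $\int_0^M H^{1/q}t^{p-1}dt$; the far tail $i>K$ is handled at order $1$ as you propose. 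Also note that Theorem 2.1 of the cited Dedecker--Merlev\`ede paper is a statement about $W_p^p(L_n,\mu)$ on ${\mathbb R}^d$ with constants depending on $d$, so invoking it as a black box here is both circular in spirit and not directly applicable; what is really needed (and what the paper supplies) is the elementary hand-made Fuk--Nagaev argument for the weighted annulus sums.
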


\begin{theorem}
\label{theo:dev_unbounded-alphaeq2p} 
Let $p\geq 1$, and assume that \eqref{eq:dimmetric2} is satisfied for $\alpha = 2p$. 
\begin{itemize}

\item If $\mu$ has a weak moment of order $rp$ with $r>2$, then, for any $q  > r$ and any $x>0$,
\begin{multline*}
{\mathbb P}\left(  W_p^p(\mu, L_n)   > x \right)  \leq  e^2 \exp\left( - n C_{p,\alpha,\beta} \left[ \frac{ x}{ I_{2p,p}  \log\left(e+ C_{p,\alpha,\beta} \frac{I_{2p,p} }{x}  \right)} \right]^{2} \right){\bf 1}_{x \leq C_p\|d(x_0,X)\|_p^p} \\ +  C_{p,q,r}\left\{   \frac { I_{2p,p}^q }{x^q n^{q/2}}  
     +  \frac {\|X\|^{rp}_{rp, w} }{x^r n^{ r-1}} \right\}.
\end{multline*}
\item If $I_{2p,p}=\int_0^\infty  \sqrt{H(t)} t^{p-1} dt <\infty$, then, for any $x>0$,
\[
{\mathbb P}\left(  W_p^p(\mu, L_n)   > x \right)  \leq  e^2 \exp\left( - n C_{p,\alpha,\beta} \left[ \frac{ x}{ I_{2p,p}  \log\left(e+ C_{p,\alpha,\beta} \frac{I_{2p,p} }{x}  \right)} \right]^{2} \right){\bf 1}_{x \leq C_p\|d(x_0,X)\|_p^p}  +  C_{p,\alpha,\beta}   \frac{I_{2p,p}^2}{x^2n} .
\]

\item  If $\mu$ has a weak moment of order $rp$ with $r \in (1,2)$, then, for any $x>0$,
\[
{\mathbb P}\left(  W_p^p(\mu, L_n)   > x \right)  \leq  
C_{p,\alpha,\beta,r}   \frac{\|X\|^{rp}_{rp, w}}{x^r n^{r-1}}. \]
\end{itemize}   
%The constants $C_{\cdot}$ are  absolute constants that only %depends on $p$, $\alpha$,  $\beta$ and $q$ and $r$.
\end{theorem}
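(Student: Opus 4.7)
The plan is to closely follow the proof scheme of Theorems~\ref{theo:dev_unbounded-alphageq2p} and~\ref{theo:dev_unbounded-alphaleq2p}, with the modifications specific to the critical exponent $\alpha=2p$. The starting point is the three-term decomposition \eqref{borne3termes-bis} from \cite{boissard2014mean}: fix a truncation radius $R>0$ and bound $W_p^p(L_n,\mu)$ by (i) a \emph{main} term accounting for the Wasserstein distance between the restrictions of $\mu$ and $L_n$ to $B(x_0,R)$, (ii) a \emph{deterministic tail} controlled by $\int_R^\infty t^{p-1} H(t)\, dt$, and (iii) a \emph{random tail} involving the number of samples $X_i$ falling outside $B(x_0,R)$ together with their distances to $x_0$.

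For the main term, I would invoke Lemma~\ref{expobound-mainterm} in the regime $\alpha=2p$, whose underlying input Proposition~\ref{prop:bound-r-bounded} produces the extra $\log(e+n/r)$ factor in the moment bound. After optimization in $r$ along the lines of the third case of Proposition~\ref{PropHoeff}, this yields an exponential bound whose denominator in the exponent is of the form $\log(e + C\Delta^p/x)$, with $\Delta$ the effective diameter of the truncated support (of order $R$). The two tail terms are then treated exactly as in the proofs of Theorems~\ref{theo:dev_unbounded-alphageq2p} and~\ref{theo:dev_unbounded-alphaleq2p}: depending on the regime ($r>2$, $I_{2p,p}<\infty$, or $r\in(1,2)$), one applies either a Rosenthal bound or a direct weak-moment truncation to the binomial count of far-away points, producing the Fuk--Nagaev contributions $I_{2p,p}^q/(x^q n^{q/2})$ and $\|X\|^{rp}_{rp,w}/(x^r n^{r-1})$. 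Calibrating $R$ so that the deterministic tail is of order $x$ forces $R$ to be comparable to a quantity controlled by $I_{2p,p}$, which is what turns the $\Delta^p$ inside the logarithm into the advertised $I_{2p,p}$.

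The main obstacle will be the bookkeeping of the logarithmic factor during the simultaneous optimization over $R$ and $r$. Since $\log$ is slowly varying, a naive substitution leaves a residual dependence on $R$ in the denominator of the exponent, whereas the statement requires a clean $\log(e + C_{p,\alpha,\beta}I_{2p,p}/x)$. The trick, already used at the end of the proof of Proposition~\ref{PropHoeff} via the inequality $\log(e+u) \leq 2 u^{1/3}$ for $u \geq e$, is to reabsorb any spurious $\log R$ contribution into an extra factor of $\log(e + I_{2p,p}/x)$ at the cost of an absolute multiplicative constant. Once this absorption is in place, combining the three bounds and selecting the optimal truncation radius $R$ in each of the three moment regimes produces the three cases of the theorem.
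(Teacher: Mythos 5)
Your sketch does not match the decomposition it cites, and the mismatch is not cosmetic. Inequality \eqref{borne3termes-bis} is \emph{not} a single-truncation bound at a radius $R$ with a deterministic tail $\int_R^\infty t^{p-1}H(t)\,dt$ and a binomial count of far-away points. It is the Boissard--Le Gouic ring decomposition: the whole space is cut into dyadic annuli $K_i=B(x_0,2^i)\setminus B(x_0,2^{i-1})$, the main term is $\sum_{i\geq 1}\lambda_i W_p^p(L_n^i,\mu^{K_i})$ over \emph{all} rings, and the two remainder terms are $\lambda W_p^p(\gm,\delta_{x_0})$ and $\lambda W_p^p(\gn,\delta_{x_0})$. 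This matters because your proposed mechanism for getting $I_{2p,p}$ into the exponential --- ``calibrating $R$ so that the deterministic tail is of order $x$'' --- does not produce the stated bound. With a single truncation, the Hoeffding bound on $B(x_0,R)$ carries $R^p$ in the denominator of the exponent, and the calibration $\int_R^\infty t^{p-1}H(t)\,dt\lesssim x$ forces $R$ to grow as $x$ decreases (e.g.\ $R\gtrsim (1/x)^{1/(p(r-1))}$ under a weak moment of order $rp$), so $R^p$ cannot be replaced by $C_{p,\alpha,\beta}I_{2p,p}$ uniformly in $x$. In the paper, $I_{2p,p}$ enters the exponential because Proposition~\ref{prop:bound-r-bounded} is applied \emph{conditionally on each ring} with sample size $nL_n(K_i)$, the factor $\lambda_i/\sqrt{L_n(K_i)}\leq\sqrt{\mu(K_i)}$ is extracted, and the sum $\sum_i r_i^p\sqrt{\mu(K_i)}$ is compared with $\int_0^\infty\sqrt{H(t)}\,t^{p-1}\,dt$; this is Lemma~\ref{expobound-mainterm} (case $\alpha=2p$), and the $\log(e+C I_{2p,p}/x)$ then comes out of the optimization in $q$ exactly as in Proposition~\ref{PropHoeff}, with no residual $\log R$ to absorb.

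A second gap is the third bullet, $r\in(1,2)$. There the conclusion has \emph{no} exponential term, and indeed none is available: a weak moment of order $rp$ with $r<2$ does not guarantee $I_{2p,p}<\infty$, so Lemma~\ref{expobound-mainterm} cannot be invoked. The paper instead bounds the main term by Markov's inequality at order $q=1+r/2\in(r,2)$ applied to $\sum_{i=2}^K\lambda_iW_p^p(L_n^i,\mu^{K_i})$, and the $\alpha=2p$ logarithm from Proposition~\ref{prop:bound-r-bounded} is absorbed into a power of $nL_n(K_i)$ using $(q-1)/q<1/2$, after which one balances a finite sum against an infinite tail with the cutoff $M=(nx)^{1/p}$ to land on $\|X\|_{rp,w}^{rp}/(x^r n^{r-1})$. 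Your proposal treats the three regimes as differing only in how the tail terms are handled, which misses that in this regime the \emph{main} term itself must be bounded polynomially by a different argument.
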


\begin{remark}
In the case where $\mu$ has a weak moment of order $rp$ with $r>2$ and $E=\mathbb R^d$, the results given in Theorems \ref{theo:dev_unbounded-alphageq2p}, \ref{theo:dev_unbounded-alphaleq2p}, and \ref{theo:dev_unbounded-alphaeq2p}  are more precise than Theorem 2.3 in \cite {BehaviorDedeckerMerl19} for two reasons: first, the results are expressed in terms of the covering dimension  $\alpha$ rather than the dimension $d$; second,   the exponential terms involve integrals $I_{\alpha,p}$ rather than weak moments $\|X\|_{rp, w}$. For instance, we infer from Theorem \ref{theo:dev_unbounded-alphageq2p} (case $\alpha >2p$) that 
$$
\limsup_{n \rightarrow \infty}
{\mathbb P}\left( n^{p/\alpha} W_p^p(\mu, L_n)   > x \right)  \leq
e^2 \exp \left( -  C_{p,\alpha,\beta}  \left[ \frac {x}{  I_{\alpha,p}   } \right]^{\frac\alpha p} \right)\, , 
$$
and from Theorem \ref{theo:dev_unbounded-alphaleq2p} (case $\alpha <2p$) that 
$$
\limsup_{n \rightarrow \infty}{\mathbb P}\left( \sqrt n W_p^p(\mu, L_n)   > x \right)  \leq
e^2 \exp \left( -  C_{p,\alpha,\beta}  \left[ \frac {x}{  I_{2p,p}   } \right]^{2} \right)\, .
$$
Moreover, if we only assume that $I_{2p,p} <\infty$, we also infer from Theorem \ref{theo:dev_unbounded-alphaleq2p} that 
\[
{\mathbb P}\left( \sqrt n W_p^p(\mu, L_n)   > x \right)  \leq C_{p,\alpha,\beta}   \frac{I_{2p,p}^2}{x^2} .
\]
Note that, when $p=1$, $I_{2p,p}= \int_0^\infty \sqrt{ H(t)} dt $. Therefore, in the case $p=d=1$, these two last bounds are  consistent with the central limit theorem of \cite{delBarrio99}, who proved that $\sqrt n W_1(\mu_n, \mu)$ converges in distribution as soon as $\int_0^\infty \sqrt{ H(t)} dt < \infty$. In the same paper the authors also prove that $\sqrt n W_1(\mu_n, \mu)$ is stochastically bounded iff $\int_0^\infty \sqrt{ H(t)} dt < \infty$, which proves the relevance of this integral condition. 
\end{remark}

\begin{remark} 
In the case where $\mu$ has a weak moment of order $rp$ with $r \in (1,2)$ and $E=\mathbb R^d$, the results given in Theorems \ref{theo:dev_unbounded-alphageq2p}, \ref{theo:dev_unbounded-alphaleq2p}, and \ref{theo:dev_unbounded-alphaeq2p}  are more precise than Theorem 2.1 of \cite {BehaviorDedeckerMerl19} for two reasons: first, the results are expressed in terms of the covering dimension  $\alpha$ rather than the dimension $d$; second, in case $\alpha >2p$ and 
$r \in (\alpha/(\alpha-p),2)$, our bounds allow to get the large deviations result \eqref{LDbound}
which cannot be deduced from Theorem 2.1 of \cite {BehaviorDedeckerMerl19}.
\end{remark}
 
\subsubsection{Large and moderate deviations under weak moment assumptions}
As immediate consequences of  Theorems~\ref{theo:dev_unbounded-alphageq2p}, \ref{theo:dev_unbounded-alphaleq2p} and \ref{theo:dev_unbounded-alphaeq2p}, we get the following estimates for large and moderate deviations. 

\begin{corollary}
\label{modev_unbounded-alphageq2p} 
Let $p\geq 1$, and assume that \eqref{eq:dimmetric2} is satisfied for some $\alpha > 2p$.
\begin{itemize}
 \item Assume that $\mu$ has a weak moment of order $rp$ with  $r >2$, and let $\rho \in ((\alpha-p)/\alpha, 1]$. Then, for  any $x>0$,
\[
\limsup_{n \rightarrow \infty} n^{r\rho -1}{\mathbb P}\left(  W_p^p(\mu, L_n)   > 
\frac{x}{n^{1-\rho}} \right)  \leq
C_{p,r}\frac {\|X\|^{rp}_{rp, w}}{x^r} \,   .
\]
\item Assume that $I_{2p,p}=\int_0^\infty  \sqrt{H(t)} t^{p-1} dt <\infty$, and let $\rho \in ((\alpha-p)/\alpha, 1]$. Then, for any $x>0$,
\[
\limsup_{n \rightarrow \infty} n^{2\rho -1}{\mathbb P}\left(  W_p^p(\mu, L_n)   > \frac{x}{n^{1-\rho}} \right)  \leq     C_p \frac{I_{2p,p}^2}{x^2}\, .
\]
    \item  Assume that $\mu$ has a weak moment of order $rp$ with  $r \in (\alpha/(\alpha-p),2)$, and let $\rho \in ((\alpha-p)/\alpha, 1]$. Then, for any $x>0$,
\[
\limsup_{n \rightarrow \infty} n^{r\rho -1}{\mathbb P}\left(  W_p^p(\mu, L_n)   > 
\frac{x}{n^{1-\rho}} \right)   \leq 
 C_{p,r}  \frac{\|X\|^{rp}_{rp, w}}{x^r}.
\]
    \item Assume that $I_{\alpha,p}=\int_0^\infty   H(t)^{(\alpha-p)/\alpha }  t^{p-1} dt <\infty$, and let $\rho \in [(\alpha-p)/\alpha, 1]$. Then, for any $x>0$, 
\[
 n^{\frac{\alpha\rho}{\alpha-p} -1} {\mathbb P}\left(  W_p^p(\mu, L_n)   > \frac{x}{n^{1-\rho}} \right)  \leq  C_{p,\alpha,\beta}   \frac{I_{\alpha,p}^{\frac{\alpha}{\alpha-p}} }{x^{\frac{\alpha}{\alpha-p}}}.
\]
\item Assume that  $\mu$ has a weak moment of order $rp$ with  $r \in (1,\alpha/(\alpha-p))$, and let $\rho \in [1/r, 1]$. Then, for any $x>0$, 
\[
 n^{r\rho -1}{\mathbb P}\left(  W_p^p(\mu, L_n)   > 
\frac{x}{n^{1-\rho}} \right)   \leq C_{p,\alpha,\beta,r}   \frac{\|X\|^{rp}_{rp, w}}{x^r}.
\]
\end{itemize}   
\end{corollary}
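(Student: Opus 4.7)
The plan is mechanical: for each of the five cases, apply Theorem~\ref{theo:dev_unbounded-alphageq2p} with $x$ replaced by $x/n^{1-\rho}$, multiply the resulting bound by the prescribed power of $n$, and read off the limsup. The conditions on $\rho$ in the statement are exactly what is required to make every unwanted term either vanish or rescale into the announced right-hand side.

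The shared exponent computation is the following. The substitution turns the exponential factor $\exp(-n C_{p,\alpha,\beta}[x/I_{\alpha,p}]^{\alpha/p})$ into $\exp(-C_{p,\alpha,\beta}\,n^{1-\alpha(1-\rho)/p}[x/I_{\alpha,p}]^{\alpha/p})$, which dominates every polynomial in $n$ precisely when $\rho>(\alpha-p)/\alpha$. The indicator $\mathbf{1}_{x/n^{1-\rho}\leq C_p\|d(x_0,X)\|_p^p}$ eventually equals $1$ when $\rho<1$; and when $\rho=1$ it either equals $1$ or switches off the exponential entirely, so it causes no problem in the limsup. Finally, multiplying the weak-moment tail $\|X\|_{rp,w}^{rp}/(x^r n^{r-1})$ by $n^{r\rho-1}$ after the substitution produces exactly $\|X\|_{rp,w}^{rp}/x^r$, which is the bound appearing on the right-hand side in cases~1, 3 and~5.

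Each case therefore reduces to controlling one remaining term. In case~1 the extra term is $C_{p,q,r}I_{2p,p}^q/(x^q n^{q/2})$, which after the substitution and multiplication by $n^{r\rho-1}$ scales as $n^{r\rho-1+q(1/2-\rho)}$; since the hypothesis $\alpha>2p$ forces $(\alpha-p)/\alpha>1/2$, the free parameter $q>r$ can be chosen large enough to make this exponent strictly negative. Case~2 is analogous, with $r$ replaced by $2$: the $I_{2p,p}^2/(x^2 n)$ term rescales to $I_{2p,p}^2/x^2$ under multiplication by $n^{2\rho-1}$. Case~3 is case~1 stripped of the $I_{2p,p}^q$ contribution, so no auxiliary parameter has to be tuned. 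Cases~4 and~5 involve a single polynomial term in the theorem; a direct exponent count shows that the prescribed multiplier $n^{\alpha\rho/(\alpha-p)-1}$ (respectively $n^{r\rho-1}$) exactly cancels the $n$-powers generated by the substitution, so the announced bound holds at every $n$ and not only in the limsup.

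There is no genuine obstacle in this argument; the only delicate point is the simultaneous compatibility, in case~1, of the three constraints (exponential decay, absorption of the $I_{2p,p}^q$ factor, and matching of the weak-moment scaling), all three of which reduce to the single inequality $\rho>(\alpha-p)/\alpha>1/2$ provided by the hypothesis $\alpha>2p$ on the covering dimension.
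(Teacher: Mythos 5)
Your proposal is correct and follows exactly the route the paper intends: the paper presents Corollary \ref{modev_unbounded-alphageq2p} as an immediate consequence of Theorem \ref{theo:dev_unbounded-alphageq2p}, obtained by the substitution $x \mapsto x/n^{1-\rho}$ and the exponent bookkeeping you carry out, and your verification of each term (including the superpolynomial decay of the exponential factor for $\rho>(\alpha-p)/\alpha$ and the exact cancellation of the $n$-powers in the weak-moment and $I_{\alpha,p}$ terms) is accurate. The only cosmetic caveat is that in case 1 the auxiliary exponent $q$ must be taken large depending on $\rho$ and $r$, so the resulting constant is really $C_{p,r,\rho}$ rather than $C_{p,r}$; this imprecision is already present in the paper's statement and is harmless.
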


\begin{corollary}
\label{modev_unbounded-alphaleq2p} 
Let $p\geq 1$, and assume that \eqref{eq:dimmetric2} is satisfied for some  $\alpha \leq  2p$. 
\begin{itemize}
\item Assume that  $\mu$ has a weak moment of order $rp$  with $r>2$, and let $\rho \in (1/2, 1]$. Then, for any  $x>0$,
\[
\limsup_{n \rightarrow \infty} n^{r\rho -1} {\mathbb P}\left(  W_p^p(\mu, L_n)   > 
\frac{x}{n^{1-\rho}} \right) \leq
  C_{p,r}  \frac {\|X\|^{rp}_{rp, w} }{x^r}  .
\]
    \item Assume that  $I_{2p,p}=\int_0^\infty  \sqrt{H(t)} t^{p-1} dt <\infty$, and let $\rho \in (1/2, 1]$. Then, for any $x>0$,
\[
\limsup_{n \rightarrow \infty} n^{2\rho -1} 
{\mathbb P}\left(  W_p^p(\mu, L_n)   > 
\frac{x}{n^{1-\rho}} \right)  \leq C_{p,\alpha,\beta}   \frac{I_{2p,p}^2}{x^2} .
\]
 \item Assume that $\mu$ has a weak moment of order $rp$ with $r \in (1,2) $ and let $\rho \in [1/r, 1]$. Then,  for any $x>0$,
\[
 n^{r\rho -1} {\mathbb P} \left(  W_p^p(\mu, L_n)   > 
\frac{x}{n^{1-\rho}} \right) \leq   C_{p,\alpha,\beta,r}   \frac{\|X\|^{rp}_{rp, w}}{x^r}.
\]
\end{itemize}   
\end{corollary}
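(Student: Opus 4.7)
The plan is to obtain each item of the corollary as an immediate consequence of the corresponding item of Theorem \ref{theo:dev_unbounded-alphaleq2p} (when $\alpha<2p$) or Theorem \ref{theo:dev_unbounded-alphaeq2p} (when $\alpha=2p$): substitute $x/n^{1-\rho}$ in place of $x$, multiply both sides by the appropriate power of $n$, and track each term asymptotically.

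The key observation concerns the weak-moment term. After substitution, the bound $\|X\|^{rp}_{rp,w}/(x^r n^{r-1})$ becomes $\|X\|^{rp}_{rp,w}/(x^r n^{r\rho-1})$, so multiplying by $n^{r\rho-1}$ recovers exactly $C\|X\|^{rp}_{rp,w}/x^r$, uniformly in $n$. This yields the third item directly (the constraint $\rho\ge 1/r$ ensures $n^{r\rho-1}\ge 1$) and produces the surviving polynomial contribution in the first item. In the same way, the bound $I_{2p,p}^2/(x^2 n)$ yields exactly $CI_{2p,p}^2/x^2$ after multiplication by $n^{2\rho-1}$, contributing to the second item.

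For the other terms, the Gaussian-type tail $I_{2p,p}^q/(x^q n^{q/2})$ (present only when $r>2$) becomes $I_{2p,p}^q \, n^{q(1/2-\rho)}/x^q$ after substitution; multiplication by $n^{r\rho-1}$ leaves a factor $n^{r\rho-1+q(1/2-\rho)}$, whose exponent can be made strictly negative by taking $q$ large enough, which is legal since $q>r$ is free and $\rho>1/2$. Similarly, the Hoeffding-type exponential has exponent $-Cnx^2/(I_{2p,p}^2 n^{2-2\rho}) = -Cx^2 n^{2\rho-1}/I_{2p,p}^2$, tending to $-\infty$ because $\rho>1/2$, so it decays super-polynomially and is killed by the polynomial $n^{r\rho-1}$ multiplier. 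In the $\alpha=2p$ regime, the extra factor $\log(e+C I_{2p,p}n^{1-\rho}/x) = O(\log n)$ in the denominator of the exponent is absorbed by $n^{2\rho-1}$ and does not spoil the decay. The indicator $\mathbf{1}_{x/n^{1-\rho}\le C_p\|d(x_0,X)\|_p^p}$ is eventually equal to $1$ (for $\rho<1$) or constant (for $\rho=1$), hence inert in the limsup.

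No genuine probabilistic obstacle is anticipated since the corollary is a repackaging of the earlier theorems. The only points of care are matching the exponent constraints ($\rho>1/2$ in the first two items and $\rho\ge 1/r$ in the third) to the decay conditions required for the Gaussian and weak-moment terms, and checking that the logarithm in the $\alpha=2p$ case is harmless, which is immediate from $\log n = o(n^{2\rho-1})$ whenever $\rho>1/2$.
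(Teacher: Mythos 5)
Your proposal is correct and matches the paper's intent exactly: the paper gives no separate proof of this corollary, presenting it as an "immediate consequence" of Theorems \ref{theo:dev_unbounded-alphaleq2p} and \ref{theo:dev_unbounded-alphaeq2p}, and your substitution $x\mapsto x/n^{1-\rho}$ with the term-by-term asymptotic bookkeeping (choosing $q$ large to kill the $n^{q(1/2-\rho)}$ term, noting the super-polynomial decay of the exponential even with the logarithmic correction when $\alpha=2p$, and the exact cancellation in the weak-moment term) is precisely that argument.
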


\begin{remark}
Taking $\rho=1$ in Corollaries \ref{modev_unbounded-alphageq2p} and \ref{modev_unbounded-alphaleq2p}, we obtain the large deviations result \eqref{LDbound} described in the introduction.
\end{remark}

%\begin{corollary}
%\label{modev_unbounded-alphaeq2p} 
%Assume that $\alpha = 2p$. 
%\begin{itemize}

%\item Assume that  $\mu$ has a weak moment of order $rp$  with $r>2$, and let $\delta \in (1/2, 1]$. Then, for any  $x>0$,
%\[
%\limsup_{n \rightarrow \infty} n^{r\delta -1} P\left(  W_p^p(\mu, L_n)   > 
%\frac{x}{n^{1-\delta}} \right) \leq
%  C_{p,r}  \frac {\|X\|^{rp}_{rp, w} }{x^r}  .
%\]
 %\item Assume that  $I_{2p,p}=\int_0^\infty  \sqrt{H(t)} t^{p-1} dt <\infty$, and let $\delta \in (1/2, 1]$. Then, for any $x>0$,
%\[
%\limsup_{n \rightarrow \infty} n^{2\delta -1} 
%P\left(  W_p^p(\mu, L_n)   > 
%\frac{x}{n^{1-\delta}} \right)  \leq C_{p,\alpha,\beta}   \frac{I_{2p,p}^2}{x^2} .
%\]

%\item Assume that $\mu$ has a weak moment of order $rp$ with $r \in (1,2) $ and let $\delta \in [1/r, 1]$. Then,  for any $x>0$,
%\[
%\limsup_{n \rightarrow \infty} n^{r\delta -1} P\left(  W_p^p(\mu, L_n)   > 
%\frac{x}{n^{1-\delta}} \right) \leq   C_{p,\alpha,\beta,r}   \frac{\|X\|^{rp}_{rp, w}}{x^r}.
%\]

%\end{itemize}   
%The constants $C_{\cdot}$ are  absolute constants that only depends on $p$, $\alpha$,  $\beta$ and $q$ and $r$.
%\end{corollary}

 \subsection{Bernstein-type inequalities under sub/super exponential moments}
 
 In this section, we assume that the distribution $\mu$ has a sub/super exponential moment, that is: there exist $\lambda>0, \kappa>0$ such that 
 \begin{equation}\label{FGexp}
   {\mathcal E}_{\kappa, \lambda}(\mu)= \int   \exp ( \lambda d(x_0, x)^\kappa) \mu(dx) < \infty \, .
 \end{equation}
 In this case, we obtain a complete extension of Theorem 1 of \cite{fournier2015rate}, in which the covering dimension $\alpha$ replaces the dimension of the ambient space (the space ${\mathbb R}^d$ in \cite{fournier2015rate}). 
 
 \begin{theorem}\label{theo:dev_unbounded-expo}
Let $p\geq 1$, and assume that \eqref{eq:dimmetric2} is satisfied. 
\begin{enumerate}
\item If \eqref{FGexp} is satisfied for some $\kappa >p $, then, for any $x>0$,
$$
{\mathbb P}\left ( W_p^p(\mu, L_n)>x \right ) \leq A(n,x) 
+ C_1\exp\left (-C_2 n x^{\frac{\kappa}{p}}\right ){\bf 1}_{x > 1} \, , 
$$
where the positive constants $C_1, C_2$ depend on $p, \kappa, \lambda, {\mathcal E}_{\kappa, \lambda}(\mu)$, and 
$$
A(n,x) =  
\begin{cases}
			   & e^2 \exp \left( - n C_{p,\alpha,\beta} \left[ \frac{x} { I_{2p,p}  } \right]^{2} \right){\bf 1}_{x \leq C_p \|d(x_0, X)\|_p^p}  \ \text{ if } \alpha <   2p  \text{ and } I_{2p,p} < \infty; \\
              &  e^2 \exp\left( - n C_{p,\alpha,\beta} \left[ \frac{ x}{ I_{2p,p}  \log\left(e+ C_{p,\alpha,\beta} \frac{I_{2p,p} }{x}  \right)} \right]^{2} \right){\bf 1}_{x \leq C_p\|d(x_0, X)\|_p^p} \  \text{ if } \alpha =  2p \text{ and } I_{2p,p} < \infty ; \\ 
              & e^2 \exp \left( - n C_{p,\alpha,\beta} 
\left[ \frac
  {x} 
{  I_{\alpha,p}   } \right]^{\frac\alpha p} \right){\bf 1}_{x \leq C_p\|d(x_0, X)\|_p^p} \   \text{ if } \alpha >   2p  \text{ and } I_{\alpha,p} < \infty .
		 \end{cases} 
$$
\item If \eqref{FGexp} is satisfied for some $\kappa \in(0,p) $, then, for any $\varepsilon \in (0,\kappa)$ and any $x>0$,
$$
{\mathbb P}\left ( W_p^p(\mu, L_n)>x \right ) \leq A(n,x) 
+ C_1\exp\left (-C_2 (n x)^{\frac{(\kappa-\varepsilon)}{p}}\right ){\bf 1}_{x \leq  1} + C_1\exp\left (-C_2 (n x)^{ \frac \kappa p}\right ){\bf 1}_{x > 1} \, , 
$$
where the positive constants $C_1, C_2$ depend on ${p, \kappa, \lambda, \varepsilon, {\mathcal E}_{\kappa, \lambda}(\mu)}$, and $A(n,x)$ is defined as in Item 1. 

\end{enumerate}

 \end{theorem}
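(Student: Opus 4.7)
The plan is to combine three ingredients: the three-term decomposition of $W_p^p(L_n,\mu)$ referenced in \eqref{borne3termes-bis}, the Hoeffding-type bound of Proposition \ref{PropHoeff} applied to the conditional empirical measure on a ball $B(x_0,r)$, and classical Bernstein/Bennett-type deviation inequalities for sums of i.i.d.\ random variables with sub/super exponential moments. This mirrors the strategy of Theorem 1 in \cite{fournier2015rate}, with the ambient dimension $d$ replaced by the covering dimension $\alpha$. First I would note that under \eqref{FGexp} one has $H(t)\leq \mathcal{E}_{\kappa,\lambda}(\mu)\exp(-\lambda t^\kappa)$, so both $I_{2p,p}$ and $I_{\alpha,p}$ (whenever relevant) are automatically finite and the quantity $A(n,x)$ is well defined in each of the three regimes of $\alpha$ relative to $2p$.

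Next I would fix a radius $r>0$ and apply the decomposition \eqref{borne3termes-bis} to write
$$W_p^p(L_n,\mu)\leq T_1(r)+T_2(r)+T_3(r),$$
where $T_1(r)$ is a Wasserstein distance between conditional measures on $B(x_0,r)$, $T_2(r)$ is the deterministic bias of order $\int_{B(x_0,r)^c}d(x_0,x)^p\mu(dx)$, and $T_3(r)$ is the empirical contribution $n^{-1}\sum_i d(x_0,X_i)^p\mathbf{1}_{d(x_0,X_i)>r}$. Since $S\cap B(x_0,r)$ satisfies \eqref{eq:dimmetric} with diameter $2r$ by \eqref{eq:dimmetric2}, the main term $T_1(r)$ is controlled by the key Lemma \ref{expobound-mainterm} (which itself feeds the moment bound of Proposition \ref{prop:bound-r-bounded} into a conditioning on the binomial count of $X_i$'s landing in $B(x_0,r)$). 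Optimizing over $r$ in the same way as in the proof of Proposition \ref{PropHoeff} produces exactly the Hoeffding piece $A(n,x)$ in the statement.

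For the tail contribution $T_3(r)$, set $Y_i=d(x_0,X_i)^p\mathbf{1}_{d(x_0,X_i)>r}$. The hypothesis \eqref{FGexp} gives $\mathbb{E}[\exp(\lambda Y_i^{\kappa/p})]<\infty$, uniformly in $i$, and a Bennett/Bernstein inequality on the centered sum $\sum_i(Y_i-\mathbb{E}[Y_i])$ then yields an exponential deviation bound whose free parameter is $r$. The deterministic bias $T_2(r)$ is absorbed into $x/C$ thanks to the exponential decay of $H$, which simultaneously fixes a lower bound for the admissible radii. The two regimes $x\leq 1$ and $x>1$ in the statement simply reflect the two regimes of the Bennett tail: the Gaussian part dominates for small deviations, the pure exponential part for large ones.

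The case $\kappa>p$ is the clean one: choosing $r^p\sim x$ makes each $Y_i$ essentially bounded on the scale $x$, so the Bennett inequality collapses to $\exp(-C_2nx^{\kappa/p})$. When $\kappa<p$, on the contrary, $Y_i$ is still heavy-tailed at the scale $r^p\sim x$, and the truncation argument of Fournier--Guillin (splitting $Y_i=Y_i\mathbf{1}_{Y_i\leq M}+Y_i\mathbf{1}_{Y_i>M}$ and optimizing $M$) forces the loss of an arbitrary $\varepsilon\in(0,\kappa)$ in the exponent for $x\leq 1$, exactly as in \cite{fournier2015rate}. The main bookkeeping obstacle, and the only genuine difficulty, is managing the optimization of $r$ across the regimes $x\leq 1$, $x>1$, and $x\leq C_p\|d(x_0,X)\|_p^p$, so that $T_1+T_2+T_3$ adds up to the stated uniform bound; once this case analysis is in place, the assembly is routine.
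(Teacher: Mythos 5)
Your overall architecture (three-term decomposition, a Hoeffding piece for the bounded part, an exponential-moment bound for the tail) is the right one, and your opening observation that \eqref{FGexp} forces $H(t)\leq \mathcal{E}_{\kappa,\lambda}(\mu)e^{-\lambda t^{\kappa}}$, hence $I_{2p,p}<\infty$ and $I_{\alpha,p}<\infty$, is exactly the remark needed for $A(n,x)$ to be well defined. But there is a genuine gap in how you set up the main term. The decomposition \eqref{borne3termes-bis} is not a truncation at a single tunable radius $r$: it is the dyadic ring decomposition $K_i=B(x_0,2^i)\setminus B(x_0,2^{i-1})$, its main term is the full sum $\sum_{i\geq 1}\lambda_i W_p^p(L_n^i,\mu^{K_i})$, and its remainder terms are $\lambda W_p^p(\gm,\delta_{x_0})$ and $\lambda W_p^p(\gn,\delta_{x_0})$, bounded via \eqref{Lemma1again} by $\sum_i[(L_n(K_i)-\mu(K_i))\vee 0]\,2^{ip}$ and its mirror image --- not by a deterministic bias plus $n^{-1}\sum_j d(x_0,X_j)^p\mathbf{1}_{d(x_0,X_j)>r}$. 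This matters because the constants $I_{\alpha,p}$, $I_{2p,p}$ in $A(n,x)$ are produced precisely by summing the per-ring moment bounds of Proposition \ref{prop:bound-r-bounded} over \emph{all} rings; that is the content of Lemma \ref{expobound-mainterm}. If you instead apply Proposition \ref{PropHoeff} to the conditional measures on a single ball $B(x_0,r)$, the exponent has the form $n\bigl(x/(C(2r)^p)\bigr)^{\cdot}$, and no choice of $r$ recovers $A(n,x)$: taking $r$ large kills the exponent, and your proposed choice $r^p\sim x$ turns it into a constant. Relatedly, the ``optimization over $r$'' you import from Proposition \ref{PropHoeff} is an optimization over the \emph{moment order} (unfortunately also called $r$ there), not over a radius; Lemma \ref{expobound-mainterm} already contains it, and once that lemma is invoked there is nothing left to optimize. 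The paper's proof of the main term is literally one line: apply Lemma \ref{expobound-mainterm}, whose hypotheses ($I_{2p,p}<\infty$ or $I_{\alpha,p}<\infty$) hold under \eqref{FGexp}.

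For the remainder, the paper does not redo any Bennett/Bernstein analysis: it reduces, via \eqref{Lemma1again}, to bounding the quantity \eqref{Lemma13FG}, i.e.\ ${\mathbb P}\bigl(\sum_{i\geq1}|L_n(K_i)-\mu(K_i)|2^{ip}>x\bigr)$, and cites Lemma 13 of \cite{fournier2015rate}, noting only that its proof transfers from ${\mathbb R}^d$ to a Polish space once $\mathcal{E}_{\kappa,\lambda}(\mu)$ is defined by \eqref{FGexp}. Your plan --- Bennett on $Y_j=d(x_0,X_j)^p\mathbf{1}_{d(x_0,X_j)>r}$, with an extra truncation level and an $\varepsilon$-loss when $\kappa<p$ --- is essentially how one reproves that lemma, and the regimes $x\leq 1$ versus $x>1$ do arise as you say. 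But as written it only addresses the empirical half of the far rings: you must also handle the finitely many rings with $2^i\leq r$ (bounded Bernoulli deviations, a separate Bernstein block), the deterministic far-field mass $\sum_{2^i>r}\mu(K_i)2^{ip}$, and the $\gn$-side term $\sum_i[(\mu(K_i)-L_n(K_i))\vee 0]2^{ip}$, whose far-ring part is controlled by that deterministic mass rather than by any empirical sum. These pieces emerge from a secondary splitting of the ring-deviation sums at a cutoff index, not from the decomposition of $W_p^p$ itself. So either quote Lemma 13 of \cite{fournier2015rate} as the paper does, or carry out that three-way splitting explicitly; the single Bennett bound you describe does not by itself control the remainder terms of \eqref{borne3termes-bis}.
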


\section{Proofs in the unbounded cases}\label{Sec4}

%\subsubsection{Proof for the unbounded cases under weak moment assumptions}\label{subsecProofWeakMoments}
\subsection{Preliminary results}
\label{subs-prelim}

As a preliminary remark, let us note that we can always assume that $\|d(x_0, X)\|_p=1$. To see this, we first make the change of distance $d'=d/\|d(x_0, X)\|_p$ (so that $\|d'(x_0, X)\|_p=1$). Then, with obvious notations,
$$
{\mathbb P}\left(  W_p^p(\mu, L_n, d)   > x \right)= {\mathbb P}\left(  W_p^p(\mu, L_n, d')   > \frac{x}{\|d(x_0, X)\|_p^p} \right)
\, .
$$
Assume now that Theorems~\ref{theo:dev_unbounded-alphageq2p} to \ref{theo:dev_unbounded-expo} %\ref{theo:dev_unbounded-alphaleq2p} and %\ref{theo:dev_unbounded-alphaeq2p} 
are true for $W_p^p(\mu, L_n, d')$. Then the results follow for $W_p^p(\mu, L_n, d)$ by using the elementary homogeneity rules: for any $\delta>0$
$$
\|d(x_0, X)\|_p^p \int_0^\infty t^{p-1} H(t\|d(x_0, X)\|_p)^\delta dt = \int_0^\infty t^{p-1} H(t)^\delta dt \, , 
$$
and for any $r>0$,
$$
\frac{\sup_{t>0} t^{rp} H(t \|d(x_0, X)\|_p)}{\left(\frac{x}{\|d(x_0, X)\|_p^p}\right )^r}= \frac{\|X\|^{rp}_{rp, w}}{x^r}\, .
$$

Hence, in the rest of the proof, we will assume without loss of generality that $\|d(x_0, X)\|_p=1$.

\medskip

The first part of the proofs of 
Theorems~\ref{theo:dev_unbounded-alphageq2p} to \ref{theo:dev_unbounded-expo}
%Theorems~\ref{theo:dev_unbounded-alphageq2p}, \ref{theo:dev_unbounded-alphaleq2p} and \ref{theo:dev_unbounded-alphaeq2p} 
is the same, and it follows  the proof of Corollary 1.3 in~\cite{boissard2014mean}. We adopt their notation.

Let $r_i = 2^i$, $i \geq 1$. We introduce the rings 
$$K_1 = B(x_0, r_1), \hskip 1cm   K_i = B(x_0, r_i) \setminus B(x_0, r_{i_{1-1}})$$
and define on these the conditional measures $\mu^{K_i} = \frac{\ind_{K_i} \mu}{\mu(K_i)} $ and  $L_n^i = \frac{\ind_{K_i}  L_n}{L_n(K_i)} $.
Let also
$$ \lambda = \sum_{i \geq 1}   \left(L_n(K_i) - \mu(K_i) \right)\vee 0 =  \sum_{i \geq 1} \left(\mu(K_i) - L_n(K_i)  \right)\vee 0 ,$$
 $$\lambda_i = L_n(K_i) \wedge \mu(K_i), $$
 and then note that
$$ \lambda = \sum_{i \geq 1}   \left(L_n(K_i) - \mu(K_i) \right)\vee 0 =  \sum_{i \geq 1} \left(\mu(K_i) - L_n(K_i)  \right)\vee 0 .$$ 

We introduce the two measures   
 $$\gm =\frac 1 \lambda \sum_{i \geq 1}   \left[\left(L_n(K_i) - \mu(K_i) \right)\vee 0 \right] L_n^i$$ 
 $$ \gn = \frac 1 \lambda \sum_{i \geq 1}   \left[\left(\mu(K_i) - L_n(K_i)  \right)\vee 0 \right] \mu^{K_i}, $$
 so that 
 $$L_n = \sum_{i \geq 1} \lambda_i L_n^i + \lambda  \gm$$
 and
 $$ \mu = \sum_{i \geq 1} \lambda_i \mu^{K_i} + \lambda  \gn.$$
 
 Here, we need the following lemma,  adapted from \cite{boissard2014mean}. The original version (Lemma 2.4 in \cite{boissard2014mean}) is stated for $W_p$, but the proof for the control of $W_p^p$ is identical.
\begin{lemma}
\label{lem:2dot4BL}
 Let $1 \leq  p < \infty$, $\mu, \, \nu \in  \mathcal{P}_p(E)$ and for $i \geq 1$, let $\mu_i, \, \nu_i \in \mathcal{P}_p(E)$, $\lambda_i \geq 0$ be such that
$\sum_{i \geq 1}  \lambda_i = 1$ and
$\sum_{i \geq 1} \lambda_i \mu_i = \mu$, $\sum_{i \geq 1} \lambda_i \nu_i = \nu$. 
Then the following bound holds:
$$ W_p^p (\mu, \nu) \leq \sum_{i \geq 1} \lambda_i W_p^p (\mu_i , \nu_i ) .$$
\end{lemma}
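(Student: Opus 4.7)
The proof follows a standard glueing argument for couplings, and the only subtlety lies in ensuring existence of optimal plans in the Polish setting.

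First I would, for each $i \geq 1$, invoke the fact that an optimal coupling $\pi_i \in \Pro(\mu_i, \nu_i)$ achieving $W_p^p(\mu_i, \nu_i) = \int d^p(x,y)\, \pi_i(dx,dy)$ exists; this is classical since $E$ is Polish and $d^p$ is lower semicontinuous and nonnegative (see \cite{villani2008optimal}). In the degenerate case where $W_p^p(\mu_i, \nu_i) = \infty$, there is nothing to prove (both sides are $+\infty$ or the right-hand side is $+\infty$), so one may assume $\mu_i, \nu_i \in \Pro_p(E)$ and that all the integrals are well-defined.

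Next, I would define the candidate coupling
$$\pi := \sum_{i \geq 1} \lambda_i \pi_i,$$
which is a well-defined probability measure on $E \times E$ since $\sum_i \lambda_i = 1$. I would then verify that $\pi \in \Pro(\mu, \nu)$: for any Borel set $A \subset E$,
$$\pi(A \times E) = \sum_{i\geq 1} \lambda_i \pi_i(A \times E) = \sum_{i \geq 1} \lambda_i \mu_i(A) = \mu(A),$$
and similarly $\pi(E \times A) = \nu(A)$, using the hypotheses on the decompositions of $\mu$ and $\nu$.

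Finally, by definition of $W_p^p$ as an infimum,
$$W_p^p(\mu, \nu) \leq \int d^p(x,y)\, \pi(dx,dy) = \sum_{i \geq 1} \lambda_i \int d^p(x,y)\, \pi_i(dx,dy) = \sum_{i \geq 1} \lambda_i W_p^p(\mu_i, \nu_i),$$
where the second equality uses monotone convergence (all integrands are nonnegative). The main step, should it present any difficulty, is simply the existence of the optimal couplings $\pi_i$; everything else is a direct computation. Note that since the bound is stated for $W_p^p$ rather than $W_p$, no appeal to Minkowski-type inequalities is needed, and the argument is in fact simpler than in \cite{boissard2014mean} where the $W_p$ version is established via the triangle inequality in $L^p$.
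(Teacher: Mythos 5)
Your proof is correct, and it is essentially the argument the paper relies on: the paper gives no proof of its own but defers to Lemma 2.4 of \cite{boissard2014mean}, whose proof is exactly this mixture-of-optimal-couplings construction (take optimal $\pi_i$, glue them into $\pi=\sum_i\lambda_i\pi_i$, check the marginals, and bound the infimum). The only superfluous step is your discussion of the case $W_p^p(\mu_i,\nu_i)=\infty$, which cannot occur since the hypotheses already place $\mu_i,\nu_i$ in $\mathcal{P}_p(E)$.
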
 
 
 According to Lemma~\ref{lem:2dot4BL}, we have 
 \begin{eqnarray*}
 W_p^p(L_n,\mu) &\leq &  \sum_{i \geq 1} \lambda_i W_p^p (L_n^i  , \mu^{K_i}  ) + \lambda W_p^p (\gm  , \gn  )\\
 &\leq & \sum_{i \geq 1} \lambda_i W_p^p (L_n^i  , \mu^{K_i}  ) + 2^{p-1} \lambda W_p^p (\gm  , \delta_{x_0}  ) + 2^{p-1} \lambda W_p^p (\gn  , \delta_{x_0}  ).
 \end{eqnarray*}
Thus for $x>0$ we get
\begin{equation}
\label{borne3termes-bis}
{\mathbb P}(W_p^p(L_n,\mu) > 3 x) \leq P \left(\sum_{i \geq 1} \lambda_i W_p^p (L_n^i  , \mu^{K_i}  ) \geq x \right)    +    P \left( 2^{p-1} \lambda W_p^p (\gm  , \delta_{x_0} )  \geq x \right) +    P \left( 2^{p-1} \lambda W_p^p (\gn  , \delta_{x_0}  )  \geq x \right).
\end{equation}
In the rest of the proof, we will call the first term of upper bound ~\eqref{borne3termes-bis} the {\it main term} and the other two terms the {\it remainder terms}. All the different cases will start from Inequality~\eqref{borne3termes-bis}.

We now give a key lemma to provide exponential deviations for the main term. 
\begin{lemma}
\label{expobound-mainterm}
Using the notation introduced above,

$$
{\mathbb P} \left(\sum_{i \geq 1} \lambda_i W_p^p (L_n^i  , \mu^{K_i}  ) \geq x \right)   \leq 
\begin{cases}
			   & e^2 \exp \left( - n C_{p,\alpha,\beta} \left[ \frac
  {x} 
{ I_{2p,p}  } \right]^{2} \right){\bf 1}_{x \leq C_p}  \ \text{ if } \alpha <   2p  \text{ and } I_{2p,p} < \infty; \\
              &  e^2 \exp\left( - n C_{p,\alpha,\beta} \left[ \frac{ x}{ I_{2p,p}  \log\left(e+ C_{p,\alpha,\beta} \frac{I_{2p,p} }{x}  \right)} \right]^{2} \right){\bf 1}_{x \leq C_p} \  \text{if } \alpha =  2p \text{ and } I_{2p,p} < \infty ; \\ 
              & e^2 \exp \left( - n C_{p,\alpha,\beta} 
\left[ \frac
  {x} 
{  I_{\alpha,p}   } \right]^{\frac\alpha p} \right){\bf 1}_{x \leq C_p} \   \text{ if } \alpha >   2p  \text{ and } I_{\alpha,p} < \infty .
		 \end{cases} 
$$		 
\end{lemma}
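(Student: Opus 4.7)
The plan is to prove a moment bound $\|S_n\|_r \le C \cdot I \cdot \Psi_\alpha(r,n)$ for every $r \ge 2$, where $S_n := \sum_{i \ge 1} \lambda_i W_p^p(L_n^i, \mu^{K_i})$, $I \in \{I_{2p,p}, I_{\alpha,p}\}$ depending on the regime, and $\Psi_\alpha$ is the rate appearing in Proposition \ref{prop:bound-r-bounded}; then to apply Markov and optimize over $r \ge 2$ to recover the deviation bound, mirroring the passage from Proposition \ref{prop:bound-r-bounded} to Proposition \ref{PropHoeff} with $\Delta^p$ replaced by $I$ (the factor $e^2$ absorbing the range of small $x$ where the optimal $r$ would fall below $2$). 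The indicator $\mathbf{1}_{x \le C_p}$ is justified by a deterministic bound: since $\lambda_i \le \mu(K_i)$ and $W_p^p(L_n^i, \mu^{K_i}) \le (2r_i)^p$, and since the normalization $\|d(x_0, X)\|_p = 1$ gives $\sum_{i \ge 2} r_{i-1}^p \mu(K_i) \le 1$, one has $S_n \le \sum_i (2r_i)^p \mu(K_i) \le C_p$ almost surely, so the claim is vacuous for $x > C_p$.

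Let $N_i$ be the number of sample points falling in $K_i$; it is $\mathrm{Binomial}(n, \mu(K_i))$-distributed, and conditionally on $N_i = n_i \ge 1$ the points in $K_i$ are i.i.d.\ from $\mu^{K_i}$ with $L_n^i$ their empirical measure. Since \eqref{eq:dimmetric2} yields $N(K_i \cap S, \delta) \le \beta (2r_i/\delta)^\alpha$ and $\mathrm{diam}(K_i) \le 2r_i$, Proposition \ref{prop:bound-r-bounded} applied conditionally gives, for $r \ge 2$,
\[
E\bigl[W_p^p(L_n^i, \mu^{K_i})^r \,\big|\, N_i = n_i\bigr]^{1/r} \le C_{p,\alpha,\beta}\,(2r_i)^p\,\Phi_\alpha(r, n_i),
\]
with $\Phi_\alpha(r,n) = \sqrt{r/n}$, $\sqrt{r/n}\log(e + n/r)$, or $(r/n)^{p/\alpha}$ according to whether $\alpha < 2p$, $\alpha = 2p$, or $\alpha > 2p$, the trivial bound $W_p^p \le (2r_i)^p$ covering the too-small values of $n_i$. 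To unconditionalize, use $\lambda_i \le \mu(K_i) \wedge (N_i/n)$ and split the expectation of $(\lambda_i W_p^p(L_n^i,\mu^{K_i}))^r$ into the regimes $\{N_i \le n\mu(K_i)\}$ (controlling $\lambda_i$ by $N_i/n$ and using $E[N_i^{r/2}\mathbf{1}_{\{N_i \le n\mu(K_i)\}}] \le (n\mu(K_i))^{r/2}$) and $\{N_i > n\mu(K_i)\}$ (controlling $\lambda_i$ by $\mu(K_i)$ and using the monotonicity $\Phi_\alpha(r, N_i) \le \Phi_\alpha(r, n\mu(K_i))$); both regimes produce the same per-ring estimate
\[
\bigl\|\lambda_i W_p^p(L_n^i, \mu^{K_i})\bigr\|_r \le C_{p,\alpha,\beta}\,(2r_i)^p\,\mu(K_i)^{e_\alpha}\,\Psi_\alpha(r, n),
\]
with exponent $e_\alpha = 1/2$ for $\alpha \le 2p$, $e_\alpha = (\alpha-p)/\alpha$ for $\alpha > 2p$, and $\Psi_\alpha$ the rate of Proposition \ref{prop:bound-r-bounded}.

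Minkowski then gives $\|S_n\|_r \le \sum_i \|\lambda_i W_p^p(L_n^i,\mu^{K_i})\|_r$, and the bound $\mu(K_i) \le H(r_{i-1})$ together with the dyadic-to-integral comparison (exactly as in the derivation of Proposition \ref{prop:bound-r-bounded} from Proposition \ref{prop:wppr_bounded}) transforms $\sum_i (2r_i)^p \mu(K_i)^{e_\alpha}$ into a constant times $I_{2p,p}$ or $I_{\alpha,p}$, yielding the desired moment bound on $S_n$. Markov's inequality and the same optimization over $r \ge 2$ as in Proposition \ref{PropHoeff} then conclude the proof. The main obstacle is the per-ring step: matching the correct exponent $e_\alpha$ on $\mu(K_i)$ requires the two-regime split based on the position of $N_i$ relative to its mean $n\mu(K_i)$ with sharp control of the binomial moments, and in the critical case $\alpha = 2p$ one must additionally verify that the extra logarithmic factor remains compatible with the dyadic comparison (which reduces to monotonicity in $n$ of $n \mapsto \sqrt{r/n}\log(e+n/r)$).
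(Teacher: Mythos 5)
Your proposal is correct and follows essentially the same route as the paper: apply Proposition \ref{prop:bound-r-bounded} ring by ring conditionally on the ring counts, remove the random sample size using $\lambda_i = L_n(K_i)\wedge\mu(K_i)$, sum via Minkowski and a dyadic-to-integral comparison to produce $I_{2p,p}$ or $I_{\alpha,p}$, then apply Markov and optimize over $r\geq 2$ exactly as in Proposition \ref{PropHoeff}, with the indicator coming from the same deterministic bound. The only real difference is cosmetic: where you split on $\{N_i\le n\mu(K_i)\}$ versus its complement, the paper simply interpolates $\lambda_i\le L_n(K_i)^{p/\alpha}\mu(K_i)^{(\alpha-p)/\alpha}$ (resp.\ $\lambda_i\le\sqrt{L_n(K_i)\mu(K_i)}$), which yields your per-ring estimate in one line; also note that the $i=1$ ring must be handled separately in the sum-to-integral comparison (using $\mu(K_1)\le 1$ and $I\geq C_p>0$ under the normalization), as $\mu(K_1)\le H(r_0)$ fails there.
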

%\begin{remark}
%Note that the constants $C_{p,\alpha,\beta}$ in %Lemma~\ref{expobound-mainterm} change from line to line and %moreover, for case $\alpha = 2p$ the two constants in the %exponential are also different.
%\end{remark}

\begin{proof}[Proof of Lemma~\ref{expobound-mainterm}]
\noindent $\bullet$ Assume that $\alpha >   2p$  and  $I_{\alpha,p} < \infty$. We apply to each term $\lambda_i W_p^p (L_n^i  , \mu^{K_i}  )$ our results on the bounded case (see Proposition~\ref{prop:bound-r-bounded}), that is on $K_i$.  For a moment of order $q \geq 2$ and since $\alpha > 2p$, applying Proposition~\ref{prop:bound-r-bounded} conditionally to the sample size $n L_n(K_i)$, we get that 
 \begin{eqnarray}
\| \lambda_i W_p^p (L_n^i  , \mu^{K_i}  ) \|_q   &\leq&  \left \|\lambda_i  r_i^p   C_{\alpha,p,\beta}  \left( \frac q {n L_n(K_i)} \right )^{\frac{p}{\alpha}} \right \|_q \notag \\
&\leq&   \left \|  r_i^p   C_{\alpha,p,\beta}  \left( \frac q {n L_n(K_i)} \right )^{\frac{p}{\alpha}}  L_n(K_i)^{\frac{p}{\alpha}} \mu(K_i)   ^{\frac{\alpha-p}{\alpha}} \right \|_q \notag \\
&\leq&     r_i^p   C_{\alpha,p,\beta}  \left( \frac q n \right )^{\frac{p}{\alpha}}  \mu(K_i)   ^{\frac{\alpha-p}{\alpha}} \label{ineq:conditnLKi}
\end{eqnarray}
where we have used the definition of  $\lambda_i$ for the second inequality. Next, we apply Markov's inequality at order $q$: 
\begin{eqnarray}
{\mathbb P} \left(\sum_{i \geq 1} \lambda_i W_p^p (L_n^i  , \mu^{K_i}  )  \geq 2x \right) 
& \leq & \frac{1}{x^q}\left ( \sum_{i \geq 1} \| \lambda_i W_p^p (L_n^i  , \mu^{K_i}  ) \|_q  \right )^q \nonumber \\ 
& \leq & \frac{2^{q-1}}{x^q} \| \lambda_1 W_p^p (L_n^1  , \mu^{K_1}  ) \|_q^q + \frac{2^{q-1}}{x^q}  \left( \sum_{i \geq 2} \| \lambda_i W_p^p (L_n^i  , \mu^{K_i}  ) \|_q  \right )^q \, . \label{tricky}
\end{eqnarray}
To deal with the first term on the right-hand side in \eqref{tricky}, we write that 
$$
\frac{2^{q-1}}{x^q} \| \lambda_1 W_p^p (L_n^1  , \mu^{K_1}  ) \|_q^q \leq \frac {C_{\alpha,p,\beta}^q}{x^q} \left( \frac q n \right )^{\frac{pq}{\alpha}}\leq \frac {C_{\alpha,p,\beta}^q}{x^q} \left( \frac q n \right )^{\frac{pq}{\alpha}} \left(\int_0^{+\infty}  H(t)^{\frac{\alpha-p}{\alpha}} t^{p-1} dt    \right)^q \, ,
$$
the second inequality being true because the integral $I_{\alpha, p}$ is greater than $\|d(x_0, X)\|_p^{p}=1$.
To deal with the second term on the right-hand side in \eqref{tricky}, we first note that, for any $i\geq 2$, we have $   \mu(K_i)  \leq   H(r_{i-1}) \leq H(t) $ for any $t \in [r_{i-2},r_{i-1}]$. Moreover $r_i^p = p  \int_{r_{i-1}}^{r_i} t^{p-1} dt +  r_{i-1}^p$ so $ r_i^p = p (1-2^{-p})^{-1} \int_{r_{i-1}}^{r_i} t^{p-1} dt$. It gives that
$$
 \frac{2^{q-1}}{x^q}  \left( \sum_{i \geq 2} \| \lambda_i W_p^p (L_n^i  , \mu^{K_i}  ) \|_q  \right )^q \leq 
\frac {C_{\alpha,p,\beta}^q}{x^q} \left( \frac q n \right )^{\frac{pq}{\alpha}} \left(\int_0^{+\infty}  H(t)^{\frac{\alpha-p}{\alpha}} t^{p-1} dt    \right)^q \, .
$$
It follows from \eqref{tricky} and the preceding computations  that
\begin{equation}
\label{eq:proba-optim-q}
{\mathbb P} \left(\sum_{i \geq 1} \lambda_i W_p^p (L_n^i  , \mu^{K_i}  ) \geq x \right)   \leq \frac {C_{\alpha,p,\beta}^q}{x^q} \left( \frac q n \right )^{\frac{pq}{\alpha}} \left(\int_0^{+\infty}  H(t)^{\frac{\alpha-p}{\alpha}} t^{p-1} dt    \right)^q  ,
\end{equation}
and by optimizing  in $q \geq 2$ as in the proof of Proposition \ref{PropHoeff} (case $\alpha > 2p$), 
we find that  
\begin{eqnarray}\label{OUF!!}
{\mathbb P} \left(\sum_{i \geq 1} \lambda_i W_p^p (L_n^i  , \mu^{K_i}  ) \geq x \right)   
&\leq&  e^2 \exp \left( - n 
\left[ \frac
  {x} 
{C_{p,\alpha,\beta} I_{\alpha,p} e } \right]^{\frac\alpha p} \right) .
\end{eqnarray}
Moreover, we have the obvious upper bound 
\begin{equation}\label{trivial}
\left \|\sum_{i \geq 1} \lambda_i W_p^p (L_n^i  , \mu^{K_i}  ) \right \|_\infty \leq 2^p\sum_{i \geq 1} 2^{ip}\mu(K_i) = C_p < \infty \, ,
\end{equation}
the last inequality being true because we have assumed that $\|d(x_0, X)\|_p=1$. Hence the indicator ${\bf 1}_{x \leq C_p}$ can be added in the upper bound \eqref{OUF!!}.

%$$ \tilde q_0= \tilde q_0(\alpha,\beta,p)  := \frac p\alpha  e^{-1}   \left( C_{\alpha,p,\beta} I_{\alpha,p} \right)^{-\alpha/p}$$
%is larger than 2, then the Probability \eqref{eq:proba-optim-q} is less than $\exp \left( -n \tilde q_0 x^{\alpha/p}  \right)$. It gives that
%\begin{eqnarray*}
%P \left(\sum_{i \geq 1} \lambda_i W_p^p (L_n^i  , \mu^{K_i}  ) \geq x \right)   &\leq&  \exp \left( -n q_0 x^{\alpha/p}  \right)  \ind_{\tilde q_0 \geq 2}  +  \ind_{q_0 <  2} \\
%&\leq&  e^2 \exp \left( -n \tilde q_0 x^{\alpha/p}  \right).
%\end{eqnarray*}

\medskip

\noindent $\bullet$  Assume that $\alpha <   2p$  and  $I_{2p,p} < \infty$.  As in \eqref{ineq:conditnLKi}, we first apply Proposition~\ref{prop:bound-r-bounded}   to each term of the sum conditionally to the sample size $n L_n(K_i)$. For  $q \geq 2$ and $\alpha < 2p$, it gives that
\begin{equation} \label{ineq:conditnLKi2}
\| \lambda_i W_p^p (L_n^i  , \mu^{K_i}  ) \|_q   \leq     r_i^p   C_{\alpha,p,\beta}  \sqrt{ \frac q n }  \sqrt{\mu(K_i)}  .
\end{equation}
Using \eqref{tricky} and proceeding as for the case $\alpha >2p$, we obtain that 
\begin{equation}
\label{optq-CaseB}
{\mathbb P} \left(\sum_{i \geq 1} \lambda_i W_p^p (L_n^i  , \mu^{K_i}  ) \geq x \right)   \leq \frac {C_{\alpha,p,\beta}^q}{x^q} \left( \frac q n \right )^{\frac{q}{2}} \left(\int_0^{+\infty}  \sqrt{  H(t)} t^{p-1} dt    \right)^q  ,
\end{equation}
and by optimizing  in $q \geq 2$ as in the proof of Proposition \ref{PropHoeff} (case $\alpha < 2p$), 
we find that  
\begin{eqnarray}\label{OUF2!!}
{\mathbb P} \left(\sum_{i \geq 1} \lambda_i W_p^p (L_n^i  , \mu^{K_i}  ) \geq x \right)   
&\leq&  e^2 \exp \left( - n 
\left[ \frac
  {x} 
{C_{p,\alpha,\beta} I_{2p,p} e } \right]^{2} \right) .
\end{eqnarray}
Using \eqref{trivial}, we see that the indicator ${\bf 1}_{x \leq C_p}$ can be added in the upper bound \eqref{OUF2!!}.
%and by optimizing  in $q \geq 2$ as before, we find that
%\begin{eqnarray*}
%P \left(\sum_{i \geq 1} \lambda_i W_p^p (L_n^i  , \mu^{K_i}  ) \geq x \right)    
%&\leq&  e^2 \exp \left( -  \frac n{2e}   \left(\frac x { C_{2p,p,\beta} I_{2p,p}} \right)^{2}  \right).
%\end{eqnarray*}

\medskip

\noindent $\bullet$ Assume that   $\alpha = 2p$ and  $I_{2p,p} < \infty$. As in \eqref{ineq:conditnLKi}, we first apply Proposition~\ref{prop:bound-r-bounded} to each term of the sum conditionally to the sample size $n L_n(K_i)$. For  $q \geq 2$ and $\alpha = 2p$, it gives that 
$$
\| \lambda_i W_p^p (L_n^i  , \mu^{K_i}  ) \|_q  \leq 
\left \| \lambda_i r_i^p C_{\alpha,p,\beta} \sqrt{ \frac q {nL_n(K_i)} } \log \left ( e + \frac{nL_n(K_i)}{q} \right ) \right \|_q
\leq 
\left \|\lambda_i r_i^p C_{\alpha,p,\beta} \sqrt{ \frac q {nL_n(K_i)} } \log \left ( e + \frac{n}{q} \right )  \right \|_q\, .
$$
Since $\lambda_i \leq \sqrt{L_n(K_i)}\sqrt{\mu(K_i)}$, we get
$$
\| \lambda_i W_p^p (L_n^i  , \mu^{K_i}  ) \|_q  \leq 
r_i^p   C_{\alpha,p,\beta}  \sqrt{ \frac q n } \log \left ( e + \frac{n}{q} \right )  \sqrt{\mu(K_i)} \, .
$$
Using \eqref{tricky} and proceeding as for the case $\alpha >2p$, we obtain that 
\begin{equation}
\label{optq-CaseC}
{\mathbb P} \left(\sum_{i \geq 1} \lambda_i W_p^p (L_n^i  , \mu^{K_i}  ) \geq x \right)   \leq \frac {C_{\alpha,p,\beta}^q}{x^q} \left (\sqrt{ \frac q n } \log \left ( e + \frac{n}{q} \right )\right )^q \left(\int_0^{+\infty}  \sqrt{  H(t)} t^{p-1} dt    \right)^q  ,
\end{equation}
and by optimizing  in $q \geq 2$ as in the proof of Proposition \ref{PropHoeff} (case $\alpha=2p$), we find that  
\begin{eqnarray}\label{OUF3!!}
{\mathbb P} \left(\sum_{i \geq 1} \lambda_i W_p^p (L_n^i  , \mu^{K_i}  ) \geq x \right)    
  &\leq& e^2 \exp\left( - n \left[ \frac{ x}{24 C_{p,\alpha,\beta}I_{2p,p} e \log(e+ \frac{C_{p,\alpha,\beta}I_{2p,p} e }{x}  )} \right]^{2} \right).  
\end{eqnarray}
Using \eqref{trivial}, we see that the indicator ${\bf 1}_{x \leq C_p}$ can be added in the upper bound \eqref{OUF3!!}.
\end{proof} 
 
We are now in a position to prove each of the cases covered by the four theorems.

 \medskip 
 
%%%%%%%%%%%%%%%%%%%%%%%%%%%%%%%%%%%%%%%%%%%%%%%%%%%%%%%%%%%%%%%%%%%%%%%%%%%%%%%%%%%%%%%%%%%%%%%%%%%%%%%%% 

\subsection{Proof of Theorem~\ref{theo:dev_unbounded-alphageq2p}}

Recall that  $\alpha > 2p$, and that we assume that $\|d(x_0, X)\|_p=1$.

\smallskip

\noindent $\bullet$ Let's start with the case where $\mu$ has weak moments of order $pr$ with $r >  2$.  
We first control the main term of the upper bound \eqref{borne3termes-bis}. We have that $\sup_{t \geq 0} t^p H(t)^{1/r} < \infty$. Since $1/r < 1/2 <     (\alpha-p)  / \alpha $, then $ I_{\alpha,p} = \int t^{p-1} H(t)^{(\alpha-p)/\alpha} dt < \infty$. We can then directly apply Lemma~\ref{expobound-mainterm}:
\begin{equation}
    \label{eq:MainTermcaseA-bis}
{\mathbb P} \left(\sum_{i \geq 1} \lambda_i W_p^p (L_n^i  , \mu^{K_i}  ) \geq x \right)  
\leq e^2 \exp \left( - n C_{p,\alpha,\beta} 
\left[ \frac
  {x} 
{  I_{\alpha,p}   } \right]^{\frac\alpha p} \right) {\bf 1}_{x \leq C_p}.
\end{equation}
We now consider the remainder terms in \eqref{borne3termes-bis}. Note that, by Lemma \ref{lem:2dot4BL}, 
\begin{equation}\label{Lemma1again}
\lambda W_p^p (\gm  , \delta_{x_0} )   \leq \sum_{i \geq 1}  [(L_n(K_i) - \mu(K_i)) \vee 0  ] r_i^p \quad  \text{and} \quad
\lambda W_p^p (\gn  , \delta_{x_0} )   \leq \sum_{i \geq 1}  [(\mu(K_i) - L_n(K_i)) \vee 0  ] r_i^p \, .
\end{equation} 
Starting from \eqref{Lemma1again} and applying Markov's inequality, we get that: for any integer $K\geq 2$ and any $q > r$, 
 \begin{eqnarray} 
 {\mathbb P} \left(  \lambda W_p^p (\gm  ,\delta_{x_0} )  \geq 2 x \right)  &\leq & 
  {\mathbb P} \left( \sum_{i = 1}^K  [(L_n(K_i) - \mu(K_i)) \vee 0  ] r_i^p    \geq   x \right) +  {\mathbb P} \left(  \sum_{i =K+ 1}^\infty  [(L_n(K_i) - \mu(K_i)) \vee 0  ] r_i^p     \geq   x \right) \nonumber \\ \label{Mq}
  &\leq &  \frac 1{x^q} \left\| \sum_{i = 1}^K  [(L_n(K_i) - \mu(K_i)) \vee 0  ] r_i^p     \right\|_q^q +  \frac 1 x \left\|  \sum_{i =K+ 1}^\infty  [(L_n(K_i) - \mu(K_i)) \vee 0  ] r_i^p    \right\|_1 \nonumber \\
    &\leq &  \frac 1{x^q} \left(  \sum_{i = 1}^K \left\| L_n(K_i) - \mu(K_i) \right\|_q r_i^p   \right)^q +  \frac 1 x  \sum_{i =K+ 1}^\infty  \left\|   L_n(K_i) - \mu(K_i) \right\|_1 r_i^p \, . 
\end{eqnarray}
Let us deal with the first term on the right-hand side in \eqref{Mq}. We have 
\begin{equation}\label{premdec}
\frac 1{x^q} \left(  \sum_{i = 1}^K \left\| L_n(K_i) - \mu(K_i) \right\|_q r_i^p   \right)^q   \leq \frac {2^{q-1}}{x^q} \left(   \left\| L_n(K_1) - \mu(K_1) \right\|_q 2^p   \right)^q +   \frac {2^{q-1}}{x^q} \left(  \sum_{i = 2}^K \left\| L_n(K_i) - \mu(K_i) \right\|_q r_i^p   \right)^q  \, .
\end{equation}
For the first term on the right-hand side in \eqref{premdec} we use Burkholder inequality, as recalled in Theorem \ref{Burkholder} of the appendix. The variables ${\bf 1}_{X_i \in K_1}- \mu(K_1) $ being bounded by 1, this gives
\begin{equation}\label{normq1}
\frac {2^{q-1}}{x^q} \left(   \left\| L_n(K_1) - \mu(K_1) \right\|_q 2^p \right)^q\leq \frac {C_{p,q}}{x^q n^{q/2}} \leq \frac {C_{p,q}}{x^q n^{q/2}} \left(\int_0^\infty \sqrt{H(t)} t^{p-1} dt    \right)^q \, ,
\end{equation}
the last inequality being true because we have assumed that $\|d(x_0,X)\|_p=1$. 
For the second term on the right-hand side in \eqref{premdec}, we apply Rosenthal's inequality, as recalled in Theorem \ref{Lem-Ros} of the appendix.  This gives
\begin{equation}\label{normq2}
\frac {2^{q-1}}{x^q} \left(  \sum_{i = 2}^K \left\| L_n(K_i) - \mu(K_i) \right\|_q r_i^p   \right)^q  \leq 
\frac {C_{p,q}}{x^q n^{q/2}} \left(  \sum_{i = 2}^K   \sqrt{ \mu(K_i)} r_i^p \right)^q  
     + \frac {C_{p,q}}{x^q n^{ q-1}} \left(  \sum_{i = 2}^K   \mu(K_i)^{\frac 1q}    r_i^p  \right)^q \, .
\end{equation}
For the second term in \eqref{Mq}, we simply note that 
\begin{equation}\label{norm1}
\frac 1 x  \sum_{i =K+ 1}^\infty  \left\|   L_n(K_i) - \mu(K_i) \right\|_1 r_i^p \leq \frac 2 x  \sum_{i =K+ 1}^\infty  \mu(K_i)  r_i^p \, .
\end{equation}
Combining the inequalities \eqref{Mq}, \eqref{premdec}, \eqref{normq1}, \eqref{normq2},  \eqref{norm1}, and comparing the sums with integrals (as done in the proof of Lemma \ref{expobound-mainterm}), we get that, for any $M>0$, 
%\begin{eqnarray*}
%  bla  &\leq &  \frac 1{x^q} \left(  \sum_{i = 1}^K  \left(  \sqrt{\frac{\mu(K_i)}{n}} + \frac{\mu(K_i)^{\frac 1q} }{n^{\frac{q-1}{q}}} \right)r_i^p  \right)^q +  \frac 1 x  \sum_{i =K+ 1}^\infty  \mu(K_i)  r_i^p \\
%     &\leq &  \frac {2^q}{x^q n^{q/2}} \left(  \sum_{i = 1}^K   \sqrt{ \mu(K_i)} r_i^p \right)^q  
 %    + \frac {2^q}{x^q n^{ q-1}} \left(  \sum_{i = 1}^K   \mu(K_i)^{\frac 1q}    r_i^p  \right)^q +  \frac 1 x  \sum_{i =K+ 1}^\infty  \mu(K_i)  r_i^p   
%  \end{eqnarray*}
% by applying the classical Rosenthal Inequality \cite{rosenthal1970subspaces} for the first term in the fourth inequality. Note that by using the classical Rosenthal Inequality rather than the version of Theorem~\ref{Lem-Ros}, we can provide a bound in terms of the tail function $H$.
% Let  $M = 2^K$, as for the first term of \eqref{borne3termes-bis} we find that:
 \begin{equation} \label{caseATermeRestBound-bis}
 {\mathbb P} \left(  \lambda W_p^p (\gm  , \delta_{x_0} )  \geq 2 x \right) 
      \leq   \frac {C_{p,q}}{x^q n^{q/2}} \left(\int_0^\infty \sqrt{H(t)} t^{p-1} dt    \right)^q  
     + \frac { C_{p,q} }{x^q n^{ q-1}} \left(\int_0^M  H(t)^{\frac1q} t^{p-1} dt    \right)^q     +  \frac {   C_p }{x  }  \int_ M^\infty   H(t)  t^{p-1} dt . 
\end{equation}
Since  $\mu$ satisfies a weak moment of order $rp$, then $H(t)  \leq 1 \wedge t^{-rp} \|X\|^{rp}_{rp, w}$, which gives  $I_{2p,p} = \int_0^\infty  \sqrt{H(t)} t^{p-1} dt <\infty$. We also use this bound on $H$ to upper bound and balance the two last terms in the right hand of \eqref{caseATermeRestBound-bis}. We take   $M = (xn)^{1/p}$ and then
  \begin{equation} 
  \label{caseA:rest-bis}
 {\mathbb P} \left(  \lambda W_p^p (\gm  , \mu )  \geq 2 x \right) 
      \leq    C_{p,q,r}\left\{   \frac { I_{2p,p}^q }{x^q n^{q/2}}  
     + \frac {  \|X\|^{rp}_{rp, w}}{x^r n^{ r-1}}   \right\},
   \end{equation}
where  $C_{p,q,r}$ only depends $p$, $q$ and $r$.

Starting from \eqref{Lemma1again}, it can be checked that the same inequality is valid for ${\mathbb P} \left(  \lambda W_p^p (\gn  , \mu )  \geq 2 x \right)$. We conclude by combining Equations~\eqref{eq:MainTermcaseA-bis} and~\eqref{caseA:rest-bis}.

\smallskip

\noindent $\bullet$ Let's now assume that $\mu$ has a weak moment of order $rp$ with  $r \in (\alpha/(\alpha-p),2)$. As in the previous case, we have that $ \sup_{t \geq 0} t^p H(t)^{1/r} < \infty$, with  $1/r <     (\alpha-p)  / \alpha $ and thus the upper bound~\eqref{eq:MainTermcaseA-bis} is still satisfied, which gives the control of the main term of \eqref{borne3termes-bis}.
 
We now consider the remainder terms of \eqref{borne3termes-bis}. Starting from \eqref{Lemma1again}, we have
\begin{multline}\label{pain}
{\mathbb P} \left(  \lambda W_p^p (\gm  ,\delta_{x_0} )  \geq 3 x \right)  \leq  {\mathbb P} \left(   [(L_n(K_1) - \mu(K_1)) \vee 0  ] r_i^p    \geq   x \right) +
  {\mathbb P} \left( \sum_{i = 2}^K  [(L_n(K_i) - \mu(K_i)) \vee 0  ] r_i^p    \geq   x \right) \\ +  {\mathbb P} \left(  \sum_{i =K+ 1}^\infty  [(L_n(K_i) - \mu(K_i)) \vee 0  ] r_i^p     \geq   x \right) \, .
\end{multline}
For the first term on the right-hand side in \eqref{pain}, we apply Markov's inequality at order $r$. This gives 
\begin{equation}\label{term1}
{\mathbb P} \left(   [(L_n(K_1) - \mu(K_1)) \vee 0  ] r_i^p    \geq   x \right) \leq \frac{C_{p,r}}{x^r} \|L_n(K_1)-\mu(K_1)\|_2^r \leq \frac{C_{p,r}}{x^r n^{r/2}} \leq C_{p,r} \frac{\|X\|^{rp}_{rp, w}}{x^r n^{r-1}} \, ,
\end{equation}
the last inequality being true because we have assumed that $\|d(x_0,X)\|_p=1$ (implying that $\|X\|^{rp}_{rp, w} \geq C_{p,r}>0$), and because $r/2>r-1$. For the second term on the right-hand side in \eqref{pain}, we apply Markov's inequality at order 2. This gives 
\begin{equation}\label{term2}
{\mathbb P} \left( \sum_{i = 2}^K  [(L_n(K_i) - \mu(K_i)) \vee 0  ] r_i^p    \geq   x \right) \leq \frac {2}{x^2} \left(  \sum_{i = 2}^K \left\| L_n(K_i) - \mu(K_i) \right\|_2 r_i^p   \right)^2  \leq 
\frac {2}{x^2  n} \left(  \sum_{i = 2}^K   \sqrt{ \mu(K_i)} r_i^p \right)^2  
      \, ,
\end{equation}
For the third term on the right-hand side in \eqref{pain}, we use the upper bound given in \eqref{norm1}. 
Combining the inequalities \eqref{pain}, \eqref{term1}, \eqref{term2},  \eqref{norm1}, and comparing the sums with integrals (as done in the proof of Lemma \ref{expobound-mainterm}), we get that, for any $M>0$, 
\begin{equation} \label{caseBTermeRestBound-bis}
 {\mathbb P} \left(  \lambda W_p^p (\gm  , \delta_{x_0} )  \geq 3 x \right) 
      \leq   C_{p,r} \frac{\|X\|^{rp}_{rp, w}}{x^r n^{r-1}}
     + \frac { C_{p} }{x^2 n} \left(\int_0^M  \sqrt{H(t)} t^{p-1} dt    \right)^2     +  \frac {   C_p }{x  }  \int_ M^\infty   H(t)  t^{p-1} dt . 
\end{equation}
Since  $\mu$ satisfies a weak moment of order $rp$, then $H(t)  \leq t^{-rp} \|X\|^{rp}_{rp, w}$, and thus
\begin{eqnarray*} 
 {\mathbb P} \left(  \lambda W_p^p (\gm  ,\delta_{x_0} )  \geq 3 x \right)
    &\leq &  C_{p,r} \frac{\|X\|^{rp}_{rp, w}}{x^r n^{r-1}} + C_{p,r} \|X\|^{rp}_{rp, w} \left\{  \frac {M^{p(2-r)}}{x^2 n }  
     + \frac { 1 }{x M^{p(r-1)} } \right\} .
  \end{eqnarray*}
Taking $M =(nx)^{1/p}$, we obtain
\begin{eqnarray} 
\label{eq:Rem-geq-r-alpha}
 {\mathbb P} \left(  \lambda W_p^p (\gm  ,\delta_{x_0} )  \geq   3x \right)  
    \leq   C_{p,r}  \frac{\|X\|^{rp}_{rp, w}}{x^r n^{r-1}},
  \end{eqnarray}  
where  $C_{p,r}$ only depends on $p$ and $r$.

 It can be checked that the same inequality is valid for 
 ${\mathbb P} \left(  \lambda W_p^p (\gn  , \mu )  \geq 3 x \right)$. We conclude by combining Inequalities~\eqref{eq:MainTermcaseA-bis} and~\eqref{eq:Rem-geq-r-alpha}.
 
\smallskip  
 
\noindent $\bullet$ We now only assume that $\mu$ has a weak moment of order $rp$ with  $r \in (1,\alpha/(\alpha-p))$. We can find $q \in (r, 2) $
such that $\alpha < \frac{q}{q-1}p$. More precisely, we take $ q = 1/2 \left(r  + \frac{\alpha}{\alpha - p} \right)$ , and then $q$ appears as a function of $\alpha$, $p$ and $r$. To control the main term of   \eqref{borne3termes-bis}, we start by splitting the probability into three terms 
\begin{multline}\label{dec3xmain}
{\mathbb P} \left(\sum_{i \geq 1} \lambda_i W_p^p (L_n^i  , \mu^{K_i}  ) \geq 3x \right)   \leq {\mathbb P} \left(\lambda_1 W_p^p (L_n^1  , \mu^{K_1}  ) \geq  x \right) + {\mathbb P} \left(\sum_{i  =2 }^K \lambda_i W_p^p (L_n^i  , \mu^{K_i}  ) \geq  x \right) \\
+ {\mathbb P} \left(\sum_{i > K} \lambda_i W_p^p (L_n^i  , \mu^{K_i}  ) \geq  x  \right) \, .
\end{multline}
For the first term on the right-hand side in \eqref{dec3xmain}, we apply Markov's inequality at order $r$. We get
$$
{\mathbb P} \left(\lambda_1 W_p^p (L_n^1  , \mu^{K_1}  ) \geq  x \right) \leq \frac{\| \lambda_1 W_p^p (L_n^1  , \mu^{K_1}  ) \|_2^r}{x^r} \leq \frac{ 2^{rp}   C_{\alpha,p,\beta, r}    \mu(K_1)   ^{\frac{(\alpha-p)r}{\alpha}}}{x^r n ^{\frac{pr}{\alpha}}} \, ,
$$
where we used the bound \eqref{ineq:conditnLKi} with $q=2$ for the second inequality. Since we have assumed that $\|d(x_0,X)\|_p=1$ (implying that $\|X\|^{rp}_{rp, w} \geq C_{p,r}>0$), and since $pr/\alpha>r-1$, we obtain  that 
\begin{equation}\label{firstTermCaseE}
   {\mathbb P} \left(\lambda_1 W_p^p (L_n^1  , \mu^{K_1}  ) \geq  x \right) \leq  C_{\alpha, p, \beta, r}  \frac{\|X\|^{rp}_{rp, w}}{x^r n^{r-1}} \, .
\end{equation}
For the second term on the right-hand side in \eqref{dec3xmain}, we apply Markov's inequality at order $q$:
\begin{equation}
\label{eq:CaseEMainTerm-bis}
{\mathbb P} \left(\sum_{i  =2 }^K \lambda_i W_p^p (L_n^i  , \mu^{K_i}  ) \geq  x \right)
  \leq    \frac 1{x^q} \left(  \sum_{i = 2}^K  \| \lambda_i W_p^p (L_n^i  , \mu^{K_i}  ) \|_q \right)^q \leq \frac 1{x^q} \left(  \sum_{i = 2}^K  \| \lambda_i W_p^p (L_n^i  , \mu^{K_i}  ) \|_2 \right)^q \, .
\end{equation}
Proceeding as in \eqref{ineq:conditnLKi},  we  apply Proposition~\ref{prop:bound-r-bounded} (case $\alpha >2p$) to each term of the sum conditionally to the sample size $n L_n(K_i)$. Since $p/\alpha >(q-1)/q$, we obtain
\begin{eqnarray*}
 \| \lambda_i W_p^p (L_n^i  , \mu^{K_i}  ) \|_2 
&\leq&  C_{\alpha,p,\beta}
 \left \| \frac{\lambda_i  2^{ip}}{ \left( n L_n(K_i) \right)^{p/\alpha }} \right \|_2 \\
 &\leq&   C_{\alpha,p,\beta} \left \| \frac{  L_n(K_i) ^{(q-1)/q} \mu(K_i)^{1/q}  2^{ip}}{ \left( n L_n(K_i) \right)^{(q-1)/q}} \right \|_2 \\
  &\leq&   C_{\alpha,p,\beta} \frac{    \mu(K_i)^{1/q}  2^{ip}}{   n ^{(q-1)/q}} .
\end{eqnarray*}
Combining this upper upper bound with \eqref{eq:CaseEMainTerm-bis} (and bearing in mind that $q$ is a function of $\alpha, p, r$), we get
\begin{equation}
\label{eq:CaseEMainTerm-bis2}
{\mathbb P} \left(\sum_{i  =2 }^K \lambda_i W_p^p (L_n^i  , \mu^{K_i}  ) \geq  x \right)
   \leq \frac {C_{\alpha,p,\beta, r}}{x^q   n ^{q-1} } \left(  \sum_{i = 2}^K \mu(K_i)^{1/q}  2^{ip} \right)^q \, .
\end{equation}
For the third term on the right-hand side in \eqref{dec3xmain}, we apply Markov's inequality at order $1$:
\begin{equation}\label{thirdTermCaseE}
{\mathbb P} \left(\sum_{i > K} \lambda_i W_p^p (L_n^i  , \mu^{K_i}  ) \geq  x  \right) \leq 
 \frac 1{x}  \sum_{i >K}  \left \|  \lambda_i   W_p^p (L_n^i  , \mu^{K_i}  ) \right \|_1 \leq \frac 1{x}  \sum_{i >K} 2^{ip}\mu(K_i) \, .
\end{equation}
Gathering \eqref{dec3xmain}, \eqref{firstTermCaseE}, \eqref{eq:CaseEMainTerm-bis2} and \eqref{thirdTermCaseE}, and  comparing the sums with integrals (as done in the proof of Lemma \ref{expobound-mainterm}), we get that, for any $M>0$, 
\begin{eqnarray*}
{\mathbb P} \left(\sum_{i \geq 1} \lambda_i W_p^p (L_n^i  , \mu^{K_i}  ) \geq 3 x \right)   
& \leq &   C_{\alpha, p, \beta, r}  \frac{\|X\|^{rp}_{rp, w}}{x^r n^{r-1}} +    \frac {C_{p,\alpha,\beta, r}} {x^q n ^{q-1}} \left(\int_0^M  H(t)^{1/q} t^{p-1} dt    \right)^q  
     +   \frac { C_{p}}{x  }  \int_ M^\infty   H(t)  t^{p-1} dt  .
\end{eqnarray*}
Since  $\mu$ has a weak moment of order $rp$, then $H(t)  \leq t^{-rp} \|X\|^{rp}_{rp, w}$, and thus 
\begin{eqnarray*} 
 {\mathbb P} \left(\sum_{i \geq 1} \lambda_i W_p^p (L_n^i  , \mu^{K_i}  ) \geq 3 x \right)
    &\leq & C_{\alpha, p, \beta, r}  \frac{\|X\|^{rp}_{rp, w}}{x^r n^{r-1}} + C_{p,\alpha,\beta,r}  \|X\|^{rp}_{rp, w} \left\{  \frac {1}{x^q n^{q-1} }  M^{p(q-r)} 
     + \frac {   1 }{x M^{p(r-1)} }  \right\}.
  \end{eqnarray*}
Taking $M =(nx)^{1/p}$, we obtain  
\begin{equation}
\label{caseEMainTerm}
{\mathbb P} \left(\sum_{i \geq 1} \lambda_i W_p^p (L_n^i  , \mu^{K_i}  ) \geq 2 x \right) 
    \leq  C_{p,\alpha,\beta,r}   \frac{\|X\|^{rp}_{rp, w}}{x^r n^{r-1}}.
\end{equation}
Regarding the remainder terms of~\eqref{borne3termes-bis}, the proof given for the previous case when $\mu$ has a weak moment of order $rp$ with  $r \in (\alpha/(\alpha-p),2)$ is also valid for $r \in (1,\alpha/(\alpha-p))$, and thus the bound \eqref{eq:Rem-geq-r-alpha} is satisfied (and is also valid  for 
${\mathbb P} \left(  \lambda W_p^p (\gn  , \mu )  \geq 3 x \right)$). The desired inequality follows from \eqref{eq:Rem-geq-r-alpha} and \eqref{caseEMainTerm}.

\smallskip

\noindent $\bullet$ We now assume that $I_{2p,p}=\int_0^\infty  \sqrt{H(t)} t^{p-1} dt <\infty$. For the main term of \eqref{borne3termes-bis}, note that $\alpha> 2p$ implies that $\frac {\alpha-p}{\alpha}>\frac 1 2$. Hence, $I_{\alpha,p} < \infty $. Thanks to Lemma \ref{expobound-mainterm}, we thus have \begin{align*}
     {\mathbb P} \left(\sum_{i \geq 1} \lambda_i W_p^p (L_n^i  , \mu^{K_i}  ) \geq x \right)\leq
     e^2 \exp \left( - n C_{p,\alpha,\beta} 
\left[ \frac
  {x} 
{  I_{\alpha,p}   } \right]^{\frac\alpha p} \right){\bf 1}_{x \leq C_p}.
\end{align*}
Regarding the remainder terms of \eqref{borne3termes-bis}, we start from \eqref{Lemma1again}, and we  use Markov's inequality at order 2, the upper bound \eqref{normq1} with $q=2$, and a comparison of sums with integrals. We obtain that
\begin{align}
{\mathbb P} (\lambda W_p^p(\gm  ,\delta_{x_0} )>x)&\leq \frac{1}{x^2} \left(\sum_{i\geq 1}\|L_n(K_i)-\mu(K_i)\|_2 r_i^p\right)^2 \nonumber \\
        &\leq \frac{1}{x^2} \left(\sum_{i\geq1}\sqrt{\frac{\mu(K_i)}{n}}2^{ip}\right)^2
    \leq \frac{C_p}{x^2 n}\left(\int_0^\infty  \sqrt{H(t)} t^{p-1} dt\right)^2 \, , \label{remainder2}
\end{align} 
and the same inequality is true for $P(\lambda W_p^p(\gn  ,\delta_{x_0}  )>x)$. We conclude by combining these two bounds.

\smallskip  

\noindent $\bullet$ We finally assume that $I_{\alpha,p}=\int_0^\infty   H(t)^{(\alpha-p)/\alpha }  t^{p-1} dt <\infty$. Since $\alpha > 2p$, we have $\frac{\alpha}{\alpha-p}\in(1,2)$. To control the main term of   \eqref{borne3termes-bis}, we start by splitting the probability into two terms:
\begin{equation}\label{dec2xmain}
{\mathbb P} \left(\sum_{i \geq 1} \lambda_i W_p^p (L_n^i  , \mu^{K_i}  ) \geq 2x \right)   \leq {\mathbb P} \left(\lambda_1 W_p^p (L_n^1  , \mu^{K_1}  ) \geq  x \right) + {\mathbb P} \left(\sum_{i  =2 }^\infty \lambda_i W_p^p (L_n^i  , \mu^{K_i}  ) \geq  x \right)  \, .
\end{equation}
For the first term on the right-hand side in \eqref{dec2xmain}, applying Markov's inequality at order $\alpha/(\alpha-p)$ and proceeding as to get \eqref{firstTermCaseE},  we obtain 
\begin{equation}\label{firstTermCaseG}
   {\mathbb P} \left(\lambda_1 W_p^p (L_n^1  , \mu^{K_1}  ) \geq  x \right) \leq  C_{\alpha, p, \beta}  \frac{I_{\alpha,p}^{\frac{\alpha}{\alpha-p}}}{x^\frac{\alpha}{\alpha-p} n^{\frac{p}{\alpha-p}}}\, .
\end{equation}
For the main term of \eqref{borne3termes-bis}, we apply Markov's inequality at order $\alpha/(\alpha-p)$, and we proceed as in \eqref{eq:CaseEMainTerm-bis}. We obtain  
\begin{equation*} 
{\mathbb P} \left(\sum_{i \geq 2} \lambda_i W_p^p (L_n^i  , \mu^{K_i}  ) \geq x \right)  \leq \frac{1}{x^{\frac{\alpha}{\alpha-p}}} \left(\sum_{i\geq 2}\left \|\lambda_i W_p^p(L_n^i,\mu^{K_i})\right \|_2\right)^{\frac{\alpha}{\alpha-p}}.\end{equation*}
Proceeding as in \eqref{ineq:conditnLKi},  we  apply Proposition~\ref{prop:bound-r-bounded} (case $\alpha >2p$) to each term of the sum conditionally to the sample size $n L_n(K_i)$. We obtain
\begin{equation*}
    \|\lambda_i W_p^p(L_n^i,\mu^{K_i})\|_2
    \leq C_{p,\alpha,\beta} \left \|\frac{\lambda_i 2^{ip}}{(nL_n(K_i))^{p/\alpha}} \right \|_2 \leq C_{p,\alpha,\beta}\frac{2^{ip}\mu(K_i)^{\frac{\alpha-p}{\alpha}}}{n^{p/\alpha}}.
\end{equation*}
Combining these two upper bounds, and  comparing the sums with integrals (as done in the proof of Lemma \ref{expobound-mainterm}), we get that 
\begin{equation}\label{eq:CaseGMainTerm-bis}
{\mathbb P} \left(\sum_{i \geq 2} \lambda_i W_p^p (L_n^i  , \mu^{K_i}  ) \geq x \right)   \leq
C_{\alpha, p, \beta}  \frac{I_{\alpha,p}^{\frac{\alpha}{\alpha-p}}}{x^\frac{\alpha}{\alpha-p} n^{\frac{p}{\alpha-p}}}\, .
\end{equation}
For the remainder terms of \eqref{borne3termes-bis},  starting from \eqref{Lemma1again} we get that
\begin{equation}\label{pain2}
{\mathbb P} \left(  \lambda W_p^p (\gm  ,\delta_{x_0} )  \geq 2 x \right)  \leq  {\mathbb P} \left(   [(L_n(K_1) - \mu(K_1)) \vee 0  ] r_i^p    \geq   x \right) +
  {\mathbb P} \left( \sum_{i \geq 2}^K  [(L_n(K_i) - \mu(K_i)) \vee 0  ] r_i^p    \geq   x \right) \, .
\end{equation}
 For the first term on the right-hand side in \eqref{pain2}, we apply Markov's inequality at order $\alpha/(\alpha-p)$, and we proceed as in \eqref{term1}. This gives 
\begin{equation}\label{term1G}
{\mathbb P} \left(   [(L_n(K_1) - \mu(K_1)) \vee 0  ] r_i^p    \geq   x \right) \leq C_{\alpha, p}  \frac{I_{\alpha,p}^{\frac{\alpha}{\alpha-p}}}{x^\frac{\alpha}{\alpha-p} n^{\frac{p}{\alpha-p}}}\, . 
\end{equation}
For the second term on the right-hand side in \eqref{pain2}, we apply Markov's inequality at order $\alpha/(\alpha-p)$. This gives 
\begin{equation*}
{\mathbb P}\left( \sum_{i \geq 2}^K  [(L_n(K_i) - \mu(K_i)) \vee 0  ] r_i^p    \geq   x \right) \leq    \frac{1}{x^{\frac{\alpha}{\alpha-p}}} \left(\sum_{i\geq 1}\|L_n(K_i)-\mu(K_i)\|_{\frac{\alpha}{\alpha-p}} r_i^p\right)^{\frac{\alpha}{\alpha-p}} \, .
\end{equation*}
Applying the von Bahr-Esseen inequality as recalled in Theorem \ref{vBE} of the appendix, we get 
\begin{equation*}
    {\mathbb P} \left( \sum_{i \geq 2}^K  [(L_n(K_i) - \mu(K_i)) \vee 0  ] r_i^p    \geq   x \right)
        %&\leq \frac{2^{\frac{\alpha-2p}{\alpha-p}}}{x^{\frac{\alpha}{\alpha-p}}n^{\frac{\alpha}{\alpha-p}-1}}\left(\sum_{i\geq 1}\mu(K_i)^{\frac{\alpha-p}{\alpha}} r_i^p\right)^{\frac{\alpha}{\alpha-p}}\\
        \leq  \frac{C_{ \alpha,p}}{x^{\frac{\alpha}{\alpha-p}}n^{\frac{p}{\alpha-p}}} \left(\sum_{i\geq 2} 2^{ip}\mu(K_i)^{\frac{\alpha-p}{\alpha}} \right)^{\frac{\alpha}{\alpha-p}} \, .
        %&\leq  \frac{2}{x^{\frac{\alpha}{\alpha-p}}n^{\frac{p}{\alpha-p}}} \left(\int_0^\infty t^{p-1} H(t)^{\frac {\alpha-p}\alpha}dt\right)^{\frac{\alpha}{\alpha-p}}\\&\leq
        %\frac{2}{x^{\frac{\alpha}{\alpha-p}}n^{\frac{p}{\alpha-p}}} I_{\alpha,p}^{\frac{\alpha}{\alpha-p}}.
\end{equation*} 
Comparing the sums with integrals (as done in the proof of Lemma \ref{expobound-mainterm}),  we obtain
\begin{equation}\label{term2G}
    {\mathbb P} \left( \sum_{i \geq 2}^K  [(L_n(K_i) - \mu(K_i)) \vee 0  ] r_i^p    \geq   x \right) \leq C_{\alpha, p}  \frac{I_{\alpha,p}^{\frac{\alpha}{\alpha-p}}}{x^\frac{\alpha}{\alpha-p} n^{\frac{p}{\alpha-p}}} \, .
\end{equation}
From \eqref{pain2}, \eqref{term1G} and \eqref{term2G}, we obtain that 
\begin{equation}\label{OUF3!!!}
{\mathbb P} \left(  \lambda W_p^p (\gm  ,\delta_{x_0} )  \geq 2 x \right) \leq C_{\alpha, p}  \frac{I_{\alpha,p}^{\frac{\alpha}{\alpha-p}}}{x^\frac{\alpha}{\alpha-p} n^{\frac{p}{\alpha-p}}} \, ,
\end{equation}
and one can check that the same inequality holds for 
${\mathbb P}(\lambda W_p^p(\gn  ,\delta_{x_0}  )>x)$. 

The desired inequality follows by combining \eqref{dec2xmain}, \eqref{firstTermCaseG}, \eqref{eq:CaseGMainTerm-bis} and \eqref{OUF3!!!}.

\subsection{Proof of Theorem~\ref{theo:dev_unbounded-alphaleq2p}}

Recall that  $\alpha < 2p$, and that we assume that $\|d(x_0, X)\|_p=1$.

\smallskip

\noindent $\bullet$ We start with the case where $\mu$ has weak moments of order $rp$ with $r >  2$.   Regarding the main term of \eqref{borne3termes-bis}, since the moment condition on $\mu$ imply that  $I_{2p,p} = \int t^{p-1} \sqrt{H(t)} dt < \infty$,  Lemma~\ref{expobound-mainterm} gives that
\begin{equation}
 \label{eq:MainCaseB}   
{\mathbb P} \left(\sum_{i \geq 1} \lambda_i W_p^p (L_n^i  , \mu^{K_i}  ) \geq x \right)    \leq  e^2 \exp \left( - n C_{p,\alpha,\beta} \left[ \frac
  {x} { I_{2p,p}  } \right]^{2} \right) {\bf 1}_{x \leq C_p} \, .
\end{equation}
Regarding the remainder terms of \eqref{borne3termes-bis}, the computations we made in the proof of Theorem~\ref{theo:dev_unbounded-alphageq2p}, when $\mu$ has a weak moment of order $rp$ with $r >  2$, are still valid, and thus Inequality~\eqref{caseA:rest-bis}  is satisfied for any $q >r$.

We conclude by combining \eqref{eq:MainCaseB}   and \eqref{caseA:rest-bis}.

\smallskip 

\noindent $\bullet$ 
Assume that $\mu$ has a weak moment of order $rp$ with $r \in (1,2) $.  In order to control the main term of  \eqref{borne3termes-bis}, we proceed as in the proof of Theorem~\ref{theo:dev_unbounded-alphageq2p}  for the case
$r \in (1,\alpha/(\alpha-p))$: we start from \eqref{dec3xmain}, splitting the probability into three terms. For the first term on the right-hand side in \eqref{dec3xmain}, we apply Markov's inequality at order $r$. We get
$$
{\mathbb P} \left(\lambda_1 W_p^p (L_n^1  , \mu^{K_1}  ) \geq  x \right) \leq \frac{\| \lambda_1 W_p^p (L_n^1  , \mu^{K_1}  ) \|_2^r}{x^r} \leq \frac{ 2^{rp}   C_{\alpha,p,\beta, r}    \mu(K_1)   ^{\frac{r}{2}}}{x^r n ^{\frac{r}{2}}} \, ,
$$
where we used the bound \eqref{ineq:conditnLKi2} with $q=2$ for the second inequality. Since we have assumed that $\|d(x_0,X)\|_p=1$ (implying that $\|X\|^{rp}_{rp, w} \geq C_{p,r}>0$), and since $r/2>r-1$, we obtain  that Inequality~\eqref{firstTermCaseE} holds.
%\begin{equation}\label{firstTermCaseE}
%   P \left(\lambda_1 W_p^p (L_n^1  , \mu^{K_1}  ) \geq  x \right) \leq  C_{\alpha, p, \beta, r}  \frac{\|X\|^{rp}_{rp, w}}{x^r n^{r-1}} \, .
%\end{equation}
For the second term on  the right-hand side in \eqref{dec3xmain}, we apply Markov's inequality at order $2$:
\begin{equation*}
{\mathbb P} \left(\sum_{i  =2 }^K \lambda_i W_p^p (L_n^i  , \mu^{K_i}  ) \geq  x \right)
  \leq    \frac 1{x^2} \left(  \sum_{i = 2}^K  \| \lambda_i W_p^p (L_n^i  , \mu^{K_i}  ) \|_2 \right)^2  \, .
\end{equation*}
Combining this upper upper bound with \eqref{ineq:conditnLKi2}, we get
\begin{equation}
\label{eq:CaseEMainTerm-bis3}
{\mathbb P} \left(\sum_{i  =2 }^K \lambda_i W_p^p (L_n^i  , \mu^{K_i}  ) \geq  x \right)
   \leq \frac {C_{\alpha,p,\beta}}{x^2   n  } \left(  \sum_{i = 2}^K 2^{ip} \sqrt{\mu(K_i)} \right)^2 \, .
\end{equation}
For the third term on the right-hand side in \eqref{dec3xmain}, we see that Inequality~\eqref{thirdTermCaseE} is valid.

Gathering \eqref{dec3xmain}, \eqref{firstTermCaseE}, \eqref{eq:CaseEMainTerm-bis3} and \eqref{thirdTermCaseE}, and  comparing the sums with integrals (as done in the proof of Lemma \ref{expobound-mainterm}), we get that, for any $M>0$, 
\begin{eqnarray*}
{\mathbb P} \left(\sum_{i \geq 1} \lambda_i W_p^p (L_n^i  , \mu^{K_i}  ) \geq 3 x \right)   
& \leq &   C_{\alpha, p, \beta, r}  \frac{\|X\|^{rp}_{rp, w}}{x^r n^{r-1}} +    \frac {C_{p,\alpha,\beta}} {x^2 n} \left(\int_0^M  \sqrt{H(t)} t^{p-1} dt    \right)^2  
     +   \frac { C_{p}}{x  }  \int_ M^\infty   H(t)  t^{p-1} dt  .
\end{eqnarray*}
Since  $\mu$ has a weak moment of order $rp$, then $H(t)  \leq t^{-rp} \|X\|^{rp}_{rp, w}$, and thus 
\begin{eqnarray*} 
{\mathbb P} \left(\sum_{i \geq 1} \lambda_i W_p^p (L_n^i  , \mu^{K_i}  ) \geq 3 x \right)
    &\leq & C_{\alpha, p, \beta, r}  \frac{\|X\|^{rp}_{rp, w}}{x^r n^{r-1}} + C_{p,\alpha,\beta,r}  \|X\|^{rp}_{rp, w} \left\{  \frac {1}{x^2 n }  M^{p(2-r)} 
     + \frac {   1 }{x M^{p(r-1)} }  \right\}.
  \end{eqnarray*}
Taking $M =(nx)^{1/p}$, we see that Inequality~\eqref{caseEMainTerm} holds.

Regarding the remainder terms of \eqref{borne3termes-bis}, we proceed exactly as in the proof of  Theorem~\ref{theo:dev_unbounded-alphageq2p} for the case where  $\mu$ satisfies a weak moment of order $rp$, with  $r \in (\alpha/(\alpha-p),2)$. This leads to the upper bound \eqref{eq:Rem-geq-r-alpha}.
%, that is  
%$$
% P \left(  \lambda W_p^p (\gm  ,\delta_{x_0} )  \geq 2 x %\right)  
%    \leq   C_{p,r} \|X\|^{rp}_{rp, w} \frac{1}{x^r %n^{r-1}}.
%$$ 
%We conclude by combining the two bounds.

We conclude by combining \eqref{caseEMainTerm}   and \eqref{eq:Rem-geq-r-alpha}.

\smallskip

\noindent $\bullet$ Assume that $I_{2p,p}=\int_0^\infty  \sqrt{H(t)} t^{p-1} dt <\infty$. To address the main term in \eqref{borne3termes-bis}, we apply Inequality~\eqref{optq-CaseB} with $q=2$, and we obtain that
\begin{equation}\label{MainTerm2}
{\mathbb P} \left(\sum_{i \geq 1} \lambda_i W_p^p (L_n^i  , \mu^{K_i}  ) \geq x \right)   \leq \frac{C_{p,\alpha,\beta}}{x^2n} \left(\int_0^\infty    \sqrt{H(t)} t^{p-1} dt \right)^2.
\end{equation}
Regarding the remainder terms of \eqref{borne3termes-bis}, the computation we made in the proof of Theorem~\ref{theo:dev_unbounded-alphageq2p} when $I_{2p,p} <\infty$ is still valid, so that  Inequality~\eqref{remainder2} holds.
%and likewise for $P(\lambda W_p^p(\gn  ,\delta_{x_0} )>x)$.
We conclude by combining \eqref{MainTerm2} and \eqref{remainder2}.
%Finally, we obtain that for case F
%$$ P(W^p_p(L_n,\mu>x)\leq \frac{c_{p,\alpha,\beta}I_{2p,p}^2}{x^2n}. $$

%%%%%%%%%%%%%%%%%%%%%%%%%%%%%%%%%%%%%%%%%%%%%%%%%%%%%%%%%%%%%%%%%%%%%%%%%%%%%%%%%%%%%%%%%%%%%%%%%%%%%%%%%%%%%%%%%%

\subsection{Proof of Theorem~\ref{theo:dev_unbounded-alphaeq2p}}

Recall that  $\alpha = 2p$, and that we assume that $\|d(x_0, X)\|_p=1$.

\noindent $\bullet$ Assume that $\mu$ has a weak moment of order $rp$ with $r>2$. For the main term of \eqref{borne3termes-bis}, we apply Lemma \ref{expobound-mainterm} (case $\alpha=2p$). Regarding the remainder terms of \eqref{borne3termes-bis}, the computation we made in the proof of Theorem~\ref{theo:dev_unbounded-alphageq2p}, when $\mu$ has weak moments of order $pr$ with $r >  2$, is still valid, and thus Inequality~\eqref{caseA:rest-bis} is satisfied for any $q >r$. We conclude by combining Lemma \ref{expobound-mainterm} (case $\alpha=2p$) and \eqref{caseA:rest-bis}.

\smallskip

\noindent $\bullet$  Assume that $I_{2p,p}=\int_0^\infty  \sqrt{H(t)} t^{p-1} dt <\infty$. We apply again   Lemma \ref{expobound-mainterm} (case $\alpha=2p$) to control the main term of \eqref{borne3termes-bis}. Regarding the remainder terms of \eqref{borne3termes-bis}, the computation we made in the proof of Theorem~\ref{theo:dev_unbounded-alphageq2p} when $I_{2p,p} <\infty$ is still valid, so that  Inequality~\eqref{remainder2} holds. We conclude by combining Lemma~\ref{expobound-mainterm} (case $\alpha=2p$) and \eqref{remainder2}.

\smallskip 

\noindent $\bullet$  Assume that $\mu$ has a weak moment of order $rp$ with $r \in (1,2)$.  For the main term of \eqref{borne3termes-bis}, we follow the proof of  Theorem~\ref{theo:dev_unbounded-alphageq2p} in the case where $\mu$ has a weak moment of order $rp$ with  $r \in (1,\alpha/(\alpha-p))$. 
%\bert{We take $q$ as in this proof} 
We 
start from \eqref{eq:CaseEMainTerm-bis}
with $q=1+r/2$. 
Proceeding as in \eqref{ineq:conditnLKi},  we  apply Proposition~\ref{prop:bound-r-bounded} (case $\alpha =2p$) to each term of the sum conditionally to the sample size $n L_n(K_i)$. It gives
\begin{eqnarray*}
 \left \| \lambda_i W_p^p (L_n^i  , \mu^{K_i}  ) \right \|_2
&\leq&  C_{p,\beta} \left \|
 \frac{\lambda_i  2^{ip}}{ \sqrt{ n L_n(K_i)}} \log \left ( e + \frac{n L_n(K_i)}{2} \right ) \right \|_2 \\
 &\leq&   C_{p,\beta, r} \left \|\frac{  L_n(K_i) ^{(q-1)/q} \mu(K_i)^{1/q}  2^{ip}}{ \left( n L_n(K_i) \right)^{(q-1)/q}}\right \|_2 \\
  &\leq&   C_{p,\beta, r} \frac{    \mu(K_i)^{1/q}  2^{ip}}{   n ^{(q-1)/q}} ,
\end{eqnarray*}
where we used the definition of $\lambda_i$ and the fact that $q/(q-1)<1/2$. The end of the proof is the same as in the proof of Theorem~\ref{theo:dev_unbounded-alphageq2p}, and thus the upper bound \eqref{caseEMainTerm} is satisfied. %\bert{pas de dependance en $q$ en le fixant comme dans le Theorem~\ref{theo:dev_unbounded-alphageq2p}}
%for the case where $\mu$ satisfies a weak moment of order $rp$ with  $r \in (1,\alpha/(\alpha-p))$. 
%The control of the residuals terms is identical to that of case C. 

Regarding the remainder terms of~\eqref{borne3termes-bis}, we proceed exactly as in the proof of Theorem~\ref{theo:dev_unbounded-alphageq2p} in the case where $\mu$ has a weak moment of order $rp$ with  $r \in (\alpha/(\alpha-p),2)$: this gives the upper bound   \eqref{eq:Rem-geq-r-alpha}. We conclude by combining \eqref{caseEMainTerm} and \eqref{eq:Rem-geq-r-alpha}.

%%%%%%%%%%%%%%%%%%%%%%%%%%%%%%%%%%%%%%%%%%%%%%%%%%%%%%
\subsection{Proof of Theorem~\ref{theo:dev_unbounded-expo}}

We start for the preliminary results of Section~\ref{subs-prelim}.
%in Subsection \ref{subsecProofWeakMoments}.
Without loss of generality, we assume that $\|d(x_0, X)\|_p=1$, and we start from Inequality~\eqref{borne3termes-bis}. Since \eqref{FGexp} holds, the main term in \eqref{borne3termes-bis} is bounded via Lemma 2. It remains to give an upper bound for the two terms ${\mathbb P} \left(  \lambda W_p^p (\gm  , \delta_{x_0} )  \geq x \right)$ and  ${\mathbb P} \left(  \lambda W_p^p (\gn  , \delta_{x_0}  )  \geq x \right)$. From the inequalities \eqref{Lemma1again}, it suffices to give an upper bound for the quantity
 \begin{equation}\label{Lemma13FG}
 {\mathbb P}\left ( \sum_{i \geq 1}  \left |L_n(K_i) - \mu(K_i)\right | 2^{ip} >x  \right ) \, .
 \end{equation}
 To control \eqref{Lemma13FG}, we apply Lemma 13 of \cite{fournier2015rate} (case of sub/super exponential moments). Note that Lemma 13 of \cite{fournier2015rate} is established for ${\mathbb R}^d$-valued random variables, but it is easy to see that it also applies to our more general situation by replacing the definition of the quantity ${\mathcal E}_{\kappa, \lambda}(\mu)$ on page 708 in \cite{fournier2015rate} by the expression \eqref{FGexp}.

\bigskip

\noindent {\bf Acknowledgments.} This work was supported by the French ANR project  GeoDSIC ANR-22-CE40-0007.

\bibliographystyle{abbrvnat}
\bibliography{ReferencesW}

\appendix

\section{Moment inequalities for partial sums}

In this appendix, we summarise the moment inequalities used in this article, in the exact form in which we will use them. We provide two references for each inequality: the first is the historical reference, and the second is the reference that allows us to obtain the inequality as we state it. 

\begin{theorem}[Rosenthal Inequality for i.i.d. random variables. \cite{rosenthal1970subspaces}, \cite{pinelis1994optimum}]
\label{Lem-Ros}
There exists an  absolute constant $L$ such that for any sequence of i.i.d and centered random variables
 $(\xi_i)_{1\leq i\leq n}$ with finite $r$-th moment with $r \geq 2$,  one has
$$ 
\left\| \frac 1n \sup_{1\leq k \leq n} \left | \sum_{i=1} ^k  \xi_i \right | \, \right\|_r \leq L  \left( \sqrt{\frac{r}{n}}   \|\xi_1\|_2   + \frac{r}{n} \left\| \max_{i=1 \dots n} |\xi_i| \right\|_r   \right)
 \leq L  \left( \sqrt{\frac{r}{n}}   \|\xi_1\|_2   + \frac{r}{n^{(r-1)/r}}\left\|  \xi_1\right\|_r   \right).
$$
\end{theorem}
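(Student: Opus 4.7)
The plan is to reduce the maximum of partial sums to a single partial sum, apply the sharp (Pinelis) form of Rosenthal's inequality, and then exploit the i.i.d.\ structure to replace variances and maxima by the corresponding norms of $\xi_1$.

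First, I would invoke Doob's maximal inequality. Since $(\sum_{i=1}^k \xi_i)_{1 \leq k \leq n}$ is a martingale (because the $\xi_i$ are independent and centered), Doob's $L^r$-inequality gives, for $r \geq 2$,
\[
\left\| \sup_{1 \leq k \leq n} \left| \sum_{i=1}^k \xi_i \right| \right\|_r \leq \frac{r}{r-1} \left\| \sum_{i=1}^n \xi_i \right\|_r \leq 2 \left\| \sum_{i=1}^n \xi_i \right\|_r,
\]
so it suffices to control the $L^r$ norm of $S_n := \sum_{i=1}^n \xi_i$ with the sharp dependence on $r$. The constant $r/(r-1)$ is harmless because it is uniformly bounded for $r \geq 2$, and gets absorbed into $L$.

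Second, I would apply Pinelis's version of Rosenthal's inequality, which asserts that for a sum $S_n$ of independent centered random variables, there exist absolute constants $K_1,K_2$ such that
\[
\|S_n\|_r \leq K_1 \sqrt{r}\, \Big(\sum_{i=1}^n \E(\xi_i^2)\Big)^{1/2} + K_2\, r\, \left\| \max_{1 \leq i \leq n} |\xi_i| \right\|_r.
\]
This is the crucial improvement: the dependence on $r$ is $\sqrt{r}$ in the Gaussian part and linear $r$ in the jump part, rather than the $r$ and $r^r$ one gets from a naive Rosenthal. Under the i.i.d.\ hypothesis, $\sum_i \E(\xi_i^2) = n\|\xi_1\|_2^2$, so after dividing by $n$ we obtain
\[
\left\| \frac{1}{n} S_n \right\|_r \leq K_1 \sqrt{\frac{r}{n}}\, \|\xi_1\|_2 + \frac{K_2\, r}{n} \left\| \max_{1 \leq i \leq n} |\xi_i| \right\|_r,
\]
and combined with Doob this yields the first displayed inequality with $L = 2\max(K_1,K_2)$.

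For the second inequality, I would use the elementary bound $(\max_{i \leq n} |\xi_i|)^r \leq \sum_{i=1}^n |\xi_i|^r$, which after taking expectations and using the i.i.d.\ assumption gives $\|\max_{i \leq n} |\xi_i|\|_r \leq n^{1/r} \|\xi_1\|_r$; substituting this into the previous bound produces the factor $r/n^{(r-1)/r}$ in front of $\|\xi_1\|_r$. The main obstacle in this plan is really hidden in the second step: obtaining Rosenthal's inequality with constants $K_1\sqrt{r}$ and $K_2 r$ (as opposed to the classical Rosenthal constants that blow up faster in $r$) is the nontrivial content, and is exactly the theorem of Pinelis one has to cite or prove separately; the remaining manipulations are routine.
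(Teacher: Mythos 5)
Your proof is correct and follows essentially the route the paper intends: the paper gives no self-contained proof of Theorem~\ref{Lem-Ros}, citing \cite{pinelis1994optimum} for exactly the sharp Rosenthal bound (constant $\sqrt{r}$ on the variance term, $r$ on the maximum term) that you invoke in your second step, and the remaining reductions you perform --- summing the variances under the i.i.d.\ hypothesis and bounding $\left\| \max_{i \leq n} |\xi_i| \right\|_r \leq n^{1/r} \|\xi_1\|_r$ via $\max_i |\xi_i|^r \leq \sum_i |\xi_i|^r$ --- are the same routine manipulations the citation is meant to cover. The only cosmetic difference is that Pinelis's inequality is already formulated for the maximum of the partial sums of a martingale, so your appeal to Doob's $L^r$-inequality is unnecessary, though harmless since $r/(r-1) \leq 2$ for $r \geq 2$ and the factor is absorbed into the absolute constant $L$.
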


\begin{theorem}[Burkholder Inequality for i.i.d random variables. \cite{Burkholder73}, \cite{Rio2009}] \label{Burkholder} For any sequence of i.i.d and centered random variables
 $(\xi_i)_{1\leq i\leq n}$ with finite $r$-th moment with $r  \geq 2$,  one has
$$
\left\| \frac 1n   \sum_{i=1} ^n  \xi_i  \, \right\|_r  \leq \sqrt{\frac{r-1}{n}} \|\xi_1\|_r \, .
$$
\end{theorem}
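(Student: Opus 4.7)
The plan is first to rescale and reformulate. Dividing through by $n$ inside the norm and squaring reduces the claim to the $L^r$ variance-type bound
$$\|S_n\|_r^2 \leq (r-1)\,n\,\|\xi_1\|_r^2, \qquad S_n=\xi_1+\cdots+\xi_n.$$
Since the $\xi_i$ are centered and independent, $(S_k)_{k\geq 0}$ is a martingale in its natural filtration $\mathcal F_k=\sigma(\xi_1,\ldots,\xi_k)$, and the desired bound is the i.i.d.\ specialization of Rio's refined Burkholder inequality: for any $L^r$-martingale $(M_k)$ with differences $d_k$ and $r\geq 2$,
$$\|M_n\|_r^2 \leq (r-1)\sum_{k=1}^n \|d_k\|_r^2.$$
I would therefore prove this martingale version; the i.i.d.\ case follows from $\|d_k\|_r=\|\xi_1\|_r$ and summing.

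To establish the martingale inequality I would induct on $n$, targeting the per-step additive recursion $\|M_n\|_r^2 \leq \|M_{n-1}\|_r^2 + (r-1)\|d_n\|_r^2$. Writing $M_n=M_{n-1}+d_n$ and applying the integral form of Taylor's theorem to $f(t)=|t|^r$ (which is $C^2$ on $\mathbb{R}\setminus\{0\}$ with $f''(t)=r(r-1)|t|^{r-2}$ for $r\geq 2$) gives the pointwise bound
$$|M_n|^r \leq |M_{n-1}|^r + r|M_{n-1}|^{r-2}M_{n-1}\,d_n + r(r-1)\!\int_0^1(1-s)|M_{n-1}+s d_n|^{r-2}\,ds\cdot d_n^{\,2}.$$
Taking conditional expectation with respect to $\mathcal F_{n-1}$ kills the linear term by the martingale property $\mathbb E[d_n\mid\mathcal F_{n-1}]=0$; taking total expectation and rewriting the remainder as an expectation of a product then reduces the problem to controlling a scalar quantity depending only on $\|M_{n-1}\|_r$ and $\|d_n\|_r$.

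The main obstacle, and the heart of Rio's refinement, is converting that scalar recursion into the target additive recursion on $\|\cdot\|_r^2$ with the \emph{sharp} constant $(r-1)$. A direct Hölder+triangle bound on the Taylor remainder leads to a recursion of the form $\|M_n\|_r^r \leq \|M_{n-1}\|_r^r + \tfrac{r(r-1)}{2}(\|M_{n-1}\|_r+\|d_n\|_r)^{r-2}\|d_n\|_r^2$, and extracting the sharp square-norm constant from this is not automatic; indeed one can check that the crude estimate $(u^r + \tfrac{r(r-1)}{2}(u+v)^{r-2}v^2)^{2/r} \leq u^2+(r-1)v^2$ fails already at $r=3$ with $u=0$. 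Rio's idea is to compare the two sides of the recursion directly at the level of the scalars $u=\|M_{n-1}\|_r$ and $v=\|d_n\|_r$ via a duality/optimization argument that exploits the joint convexity of $(u,v)\mapsto(u^2+(r-1)v^2)^{r/2}$ together with the martingale orthogonality $\mathbb E[M_{n-1}d_n]=0$; this is the step where I expect the bulk of the work to lie. Once the additive recursion $\|M_n\|_r^2\leq \|M_{n-1}\|_r^2+(r-1)\|d_n\|_r^2$ is in hand, iterating from $M_0=0$ yields $\|M_n\|_r^2\leq(r-1)\sum_{k=1}^n\|d_k\|_r^2$, which specializes in the i.i.d.\ setting to $\|S_n\|_r\leq\sqrt{(r-1)n}\,\|\xi_1\|_r$; dividing by $n$ closes the proof.
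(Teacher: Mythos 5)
You correctly identify the result as the i.i.d.\ specialization of Rio's refined Burkholder inequality, and your reduction is sound: the paper itself gives no proof (the statement is quoted in the appendix from \cite{Rio2009}), and once the martingale recursion $\|M_n\|_r^2 \leq \|M_{n-1}\|_r^2 + (r-1)\|d_n\|_r^2$ is available, iterating from $M_0=0$ and specializing to $d_k=\xi_k$ does give $\|S_n\|_r \leq \sqrt{(r-1)n}\,\|\xi_1\|_r$. But your proposal has a genuine gap at exactly the point you flag yourself: the recursion — which \emph{is} the theorem — is never proved. You set up a pointwise Taylor expansion of $|t|^r$, correctly observe that the resulting $L^r$-level recursion cannot deliver the sharp constant (your check that $\bigl(u^r+\tfrac{r(r-1)}{2}(u+v)^{r-2}v^2\bigr)^{2/r}\leq u^2+(r-1)v^2$ fails at $r=3$, $u=0$ is right, since $3^{2/3}>2$), and then defer the real work to an unspecified ``duality/optimization argument.'' This is not a partial proof with a fixable hole: the pointwise Taylor route is a dead end for the constant $(r-1)$, and the argument that actually works is structurally different.

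Rio's argument is short and avoids pointwise expansion entirely: one applies Taylor to the \emph{scalar} function $g(t)=\|M_{n-1}+t\,d_n\|_r^2=h(t)^{2/r}$, where $h(t)=\E|M_{n-1}+t\,d_n|^r$. Writing $M=M_{n-1}+t\,d_n$, one has $h'(t)=r\,\E\bigl[|M|^{r-2}M\,d_n\bigr]$ and $h''(t)=r(r-1)\,\E\bigl[|M|^{r-2}d_n^2\bigr]$. The martingale property kills $g'(0)$, since
\begin{equation*}
\E\bigl[|M_{n-1}|^{r-2}M_{n-1}\,d_n\bigr]=\E\Bigl[|M_{n-1}|^{r-2}M_{n-1}\,\E[d_n\mid\mathcal F_{n-1}]\Bigr]=0\,,
\end{equation*}
and because $2/r-1\leq 0$ the chain rule gives
\begin{equation*}
g''(t)=\tfrac 2r\,h^{\frac 2r-1}h''+\tfrac 2r\bigl(\tfrac 2r-1\bigr)h^{\frac 2r-2}(h')^2\leq \tfrac 2r\,h(t)^{\frac 2r-1}\,r(r-1)\,\E\bigl[|M|^{r-2}d_n^2\bigr]\leq 2(r-1)\|d_n\|_r^2\,,
\end{equation*}
where the last step is H\"older with exponents $r/(r-2)$ and $r/2$, which yields $\E[|M|^{r-2}d_n^2]\leq h(t)^{(r-2)/r}\|d_n\|_r^2$ so that the powers of $h$ cancel. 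Then $g(1)\leq g(0)+g'(0)+\tfrac 12\sup_t g''(t)$ gives the recursion with the sharp constant. The key move you were missing is that the concavity of $u\mapsto u^{2/r}$ is exploited \emph{inside} the derivative computation (the negative second term in $g''$ is simply discarded), converting the multiplicative recursion on $h$ into the additive recursion on $g=\|\cdot\|_r^2$; no joint-convexity or duality step is needed.
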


\begin{theorem}[von Bahr-Esseen Inequality for i.i.d random variables. \cite{BahrEsseen}, \cite{Pinelis2015}] \label{vBE} For any sequence of i.i.d and centered random variables
 $(\xi_i)_{1\leq i\leq n}$ with finite $r$-th moment with $r \in (1, 2]$,  one has
$$
\left\| \frac 1n   \sum_{i=1} ^n  \xi_i  \, \right\|_r  \leq \frac{2^{(2-r)/r}}{n^{(r-1)/r}} \|\xi_1\|_r \, .
$$
\end{theorem}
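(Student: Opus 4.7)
The approach follows the classical route of symmetrization combined with a sharp one-step increment estimate needed to recover the stated constant $2^{(2-r)/r}$.

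First I would reduce to the case of symmetric i.i.d.\ variables. Let $(\tilde\xi_i)_{1 \leq i \leq n}$ be an independent copy of $(\xi_i)_{1 \leq i \leq n}$, and set $\eta_i = \xi_i - \tilde\xi_i$. Since each $\xi_i$ is centred, Jensen's inequality applied conditionally on $(\xi_i)$ gives
\begin{equation*}
\left\| \tfrac{1}{n}\sum_{i=1}^n \xi_i \right\|_r
= \left\| \tfrac{1}{n}\sum_{i=1}^n \xi_i - \tfrac{1}{n} E\!\left[\sum_{i=1}^n \tilde\xi_i \,\Big|\, (\xi_i)\right] \right\|_r
\leq \left\| \tfrac{1}{n}\sum_{i=1}^n \eta_i \right\|_r,
\end{equation*}
and the $\eta_i$ are symmetric and i.i.d.

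Second, for a symmetric i.i.d.\ sequence I would apply the Clarkson-type inequality $|a+b|^r + |a-b|^r \leq 2(|a|^r + |b|^r)$, valid for $r \in [1,2]$. Conditioning on $S_{k-1}=\sum_{i<k}\eta_i$ and exploiting the symmetry of $\eta_k$, this gives the one-step bound $E[|S_{k-1} + \eta_k|^r \mid S_{k-1}] \leq |S_{k-1}|^r + E|\eta_k|^r$. Induction on $k$ yields $E|S_n|^r \leq n\,E|\eta_1|^r$, hence $\|n^{-1}S_n\|_r \leq n^{-(r-1)/r}\|\eta_1\|_r$. Passing back from $\eta_1$ to $\xi_1$ via the convexity bound $|\xi_1 - \tilde\xi_1|^r \leq 2^{r-1}(|\xi_1|^r + |\tilde\xi_1|^r)$ gives $\|\eta_1\|_r \leq 2\|\xi_1\|_r$. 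This recovers the historical von Bahr--Esseen constant $2$ but not the sharp constant $2^{(2-r)/r}$.

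The main obstacle is therefore to improve this constant: at $r=2$ the target is $1$, matching the trivial orthogonality identity $\|S_n\|_2^2 = n\|\xi_1\|_2^2$, whereas crude symmetrization always loses a factor of order $2^{r-1}$ in the convexity step. To reach Pinelis's sharp value, I would bypass symmetrization altogether and apply induction directly to the centred sequence, using a sharp one-step estimate of the form $E|a+\xi|^r \leq |a|^r + 2^{2-r}\,E|\xi|^r$, valid for any deterministic $a \in \R$ and any centred real $\xi \in L^r$ with $r \in (1,2]$. The proof of this pointwise-in-$a$ estimate is the genuinely hard analytic step: by homogeneity one reduces to $E|\xi|^r = 1$, and then shows that among all centred laws with given $r$-th moment the supremum of $a \mapsto E|a+\xi|^r - |a|^r$ is attained on two-point distributions, for which $2^{2-r}$ can be checked explicitly as the worst case. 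Iterating this bound conditionally on $S_{k-1}$ gives $E|S_n|^r \leq 2^{2-r} n\,E|\xi_1|^r$, and taking $r$-th roots yields the announced inequality.
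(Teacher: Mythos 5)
The paper itself does not prove Theorem \ref{vBE}: the appendix only recalls it, delegating the historical constant to \cite{BahrEsseen} and the sharp form with $2^{(2-r)/r}$ to \cite{Pinelis2015}. Your architecture is exactly the right one and matches how the cited sharp-constant version is obtained: a one-step bound $\E|a+\xi|^r\le |a|^r+2^{2-r}\E|\xi|^r$ for centred $\xi$, iterated by conditioning on $S_{k-1}$, gives $\E|S_n|^r\le 2^{2-r}n\,\E|\xi_1|^r$, which is the statement after taking $r$-th roots. (A minor slip in your warm-up: the symmetrization chain actually yields $\E|S_n|^r\le 2^r n\,\E|\xi_1|^r$, i.e.\ norm-constant $2$, which is \emph{weaker} than von Bahr--Esseen's historical $\E|S_n|^r\le 2\sum_i\E|\xi_i|^r$; harmless, since that paragraph is only motivation.)

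The genuine gap is the one-step estimate itself, which you assert with a sketch that would not go through as written. First, with the two constraints $\E\xi=0$ and $\E|\xi|^r=1$, extreme points of the constrained set of laws may charge \emph{three} points; the two-point reduction is legitimate only if you instead maximize the single functional $\E\bigl[\,|a+\xi|^r-|a|^r-2^{2-r}|\xi|^r\,\bigr]$ under the sole constraint $\E\xi=0$. Second, and more importantly, $2^{2-r}$ is \emph{not} ``the worst case'' over two-point laws: degenerating a centred two-point law ($\xi=-t$ with probability $\varepsilon\to0$) shows that the exact supremum of $\bigl(\E|a+\xi|^r-|a|^r\bigr)/\E|\xi|^r$ is $\sup_{t>0}\bigl(|1-t|^r-1+rt\bigr)/t^r$, which is strictly smaller than $2^{2-r}$ for every $r\in(1,2)$ (about $1.31$ versus $2^{1/2}\approx1.41$ at $r=3/2$; equality only at $r=2$). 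So the extremal computation you propose would not produce $2^{2-r}$; what you need, and what is true, is that $2^{2-r}$ is an \emph{admissible upper bound}, and that verification is precisely the missing work.

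The cleanest repair bypasses the optimization over laws entirely: prove the deterministic inequality, valid for $r\in(1,2]$ and all $a,x\in\R$,
\[
|a+x|^r\ \le\ |a|^r+r\,\mathrm{sgn}(a)\,|a|^{r-1}x+2^{2-r}|x|^r\,,
\]
and integrate it against the law of $\xi$; the linear term vanishes since $\E\xi=0$. By homogeneity and symmetry take $a=1$ (the case $a=0$ is trivial because $2^{2-r}\ge1$). For $x\ge0$ the stronger bound $(1+x)^r\le1+rx+x^r$ holds: the difference vanishes at $x=0$ and has derivative $r\bigl(1+x^{r-1}-(1+x)^{r-1}\bigr)\ge0$ by subadditivity of $v\mapsto v^{r-1}$. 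For $x=-t$ with $t>0$, set $g(t)=2^{2-r}t^r+1-rt-|1-t|^r$. On $(0,1]$, $g(0)=0$ and $g'(t)=r\bigl(2^{2-r}t^{r-1}+(1-t)^{r-1}-1\bigr)\ge0$, since the bracket is concave in $t$ and nonnegative at both endpoints $t=0$ and $t=1$. On $[1,\infty)$, $g(1)=2^{2-r}+1-r\ge0$, and $g'(t)=r\bigl(2^{2-r}t^{r-1}-(t-1)^{r-1}-1\bigr)\ge0$ because $F(t)=2^{2-r}t^{r-1}-(t-1)^{r-1}$ has a unique critical point on $(1,\infty)$, at $t=2$, where $F(2)=2^{2-r}2^{r-1}-1=1$, and the boundary behaviour ($F(1^+)=2^{2-r}\ge 1$, $F(t)\to\infty$ for $r<2$) shows this is the global minimum, so $F\ge1$. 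With this lemma in place, your conditioning-and-induction step is correct and the theorem follows with the stated constant.
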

\end{document}